\documentclass[a4paper]{amsart}
\usepackage[utf8x]{inputenc}
\usepackage{graphicx,color,easybmat}
\usepackage{amssymb,amsmath}

\newtheorem{MainTheorem}{Theorem}

\newtheorem*{conjecture*}{Conjecture}
\newtheorem{MainCorollary}[MainTheorem]{Corollary}

\newtheorem{MainProposition}[MainTheorem]{Proposition}

\newtheorem{theorem}{Theorem}[section]
\newtheorem{proposition}[theorem]{Proposition}
\newtheorem{lemma}[theorem]{Lemma}

\theoremstyle{definition}
\newtheorem{remark}[theorem]{Remark}
\newtheorem{defi}[theorem]{Definition}

\newenvironment{case}[2][Case]{%
  \trivlist \item[\hskip\labelsep{\itshape #1 #2.}]\begin{em}}{
  \end{em}\endtrivlist}

\newcommand{\evalat}[1]{\bigr\rvert_{#1}}
\usepackage{enumerate}
\makeatletter
\renewcommand\theenumi{\@alph\c@enumi}
\renewcommand\theenumii{\@alph\c@enumii}
\renewcommand\theenumiii{\@alph\c@enumiii}
\renewcommand\theenumiv{\@alph\c@enumiv}

\def\@map#1#2[#3]{\mbox{$#1 \colon #2 \longrightarrow #3$}}
\def\map#1#2{\@ifnextchar [{\@map{#1}{#2}}{\@map{#1}{#2}[#2]}}
\def\@chull#1[#2]{\ensuremath{\langle #1 \rangle_{_{#2}}}}
\def\chull#1{\@ifnextchar [{\@chull{#1}}{\ensuremath{\langle #1 \rangle}}}
\makeatother
\DeclareMathOperator{\Int}{Int}
\DeclareMathOperator{\Cl}{Cl}
\DeclareMathOperator{\En}{En}

\begin{document}
\title[Zero entropy cycles on trees: from Topology to Combinatorics]{Zero entropy cycles on trees: from Topology to Combinatorics and an application to star maps}

\author[D. Juher]{David Juher}
\address{Departament d'Inform\`atica, Matem\`atica Aplicada i Estad\'{\i}stica,
Universitat de Gi\-ro\-na, c/ Maria Aur\`elia Capmany 61, 17003 Girona, Spain.
ORCID 0000-0001-5440-1705}
\email{david.juher@udg.edu \textrm{(Corresponding author)}}

\author[F. Ma\~{n}osas]{Francesc Ma\~{n}osas}
\address{Departament de Matem\`atiques, Edifici C, Universitat
Aut\`onoma de Barcelona, 08913 Cerdanyola del Vall\`es, Barcelona, Spain. ORCID 0000-0003-2535-0501}
\email{Francesc.Manosas@uab.cat}

\author[D. Rojas]{David Rojas}
\address{Departament de Matem\`atiques, Edifici C, Universitat
Aut\`onoma de Barcelona, 08913 Cerdanyola del Vall\`es, Barcelona, Spain. ORCID 0000-0001-7247-4705}
\email{David.Rojas@uab.cat}

\thanks{This work has been funded by grant PID2023-146424NB-I00 of Ministerio de Ciencia e Innovaci\'on.}

\subjclass{Primary: 37E15, 37E25}
 \keywords{tree maps, combinatorial patterns, periodic orbits, topological entropy}

\begin{abstract}
In this paper we give a fully combinatorial description of the zero entropy periodic patterns on trees.
Unlike previously known characterizations of such patterns, our criterion is independent of any particular
topological realization of the pattern and provides, thus, a practical and fast algorithm to test zero entropy.
As an application, consider a $k$-star $T$ (a tree with $k$ edges attached at a unique branching point of
valence $k$) and the set $\mathcal{F}_{n,k}$ of all continuous maps $\map{f}{T}$ having a periodic orbit of period $n$
properly contained in $T$ (each edge of $T$ contains at least one point of the orbit). We find all pairs $(n,k)$
such that $\mathcal{F}_{n,k}$ contains maps of entropy zero, and we describe the patterns of such zero-entropy orbits.
\end{abstract}

\maketitle

\section{Introduction: patterns and canonical models for tree maps}\label{S1}
The notion of \emph{pattern} is central in the field of Combinatorial Dynamics. Consider a class $\mathcal{X}$ of topological
spaces (closed intervals, trees, graphs and compact surfaces are classic examples) and the family $\mathcal{F}_\mathcal{X}$ of
all maps $\{\map{f}{X}:X\in\mathcal{X}\}$ satisfying a given property (continuous maps and homeomorphisms are typical examples).
The iteration of any of such maps gives rise to a discrete dynamical system. Assume now that we have a map $\map{f}{X}$ in
$\mathcal{F}_\mathcal{X}$ which is known to have an invariant set $P$. Broadly speaking, the \emph{pattern of $P$} is the
equivalence class of all maps $\map{g}{Y}$ in $\mathcal{F}_\mathcal{X}$ having an invariant set $Q\subset Y$ that
``behaves like $P$'' in a combinatorial sense.

An example is given by the family $\mathcal{F}_\mathcal{M}$ of surface homeomorphisms. Here the pattern (also called \emph{braid type})
of a cycle $P$ of a map $\map{f}{M}$ from $\mathcal{F}_\mathcal{M}$, where $M$ is a surface, is defined by the isotopy class,
up to conjugacy, of $f\evalat{M\setminus P}$ \cite{bow,mat}.

For simply connected spaces, a definition of pattern of an invariant set $P$ of a map $f$ has to account for:
\begin{enumerate}
\item the relative positions of the points of $P$ inside the space
\item the way these positions are permuted under the action of $f$.
\end{enumerate}

For continuous maps of closed intervals, the points of $P$ are totally ordered and the pattern of $P$ can be simply identified with a
permutation in a natural way. This seminal notion of pattern was formalized and developed in the early 1990s \cite{bald,mn}.

In the last decades, a growing interest has arisen in extending this concept from the interval case to more general
one-dimensional spaces such as graphs \cite{patgraf,AMM} or trees \cite{aglmm,bald2,bern}. In this paper we deal with patterns of
periodic orbits of continuous maps defined on trees. To precise the conditions (a,b) above in this context, next
we introduce some definitions.

A \emph{tree} is a compact uniquely arcwise connected space which is either a point or a union of a finite number of intervals
(an \emph{interval} is any space homeomorphic to $[0,1]\subset\mathbb{R}$). Any continuous map $\map{f}{T}$ from a tree $T$ into itself will be
called a \emph{tree map}. A set $X\subset T$ is said to be \emph{$f$-invariant} if $f(X)\subset X$. For each $x\in T$, we define the
\emph{valence} of $x$ to be the number of connected components of $T\setminus\{x\}$. A point of valence different from 2 will be called
a \emph{vertex} of $T$ and the set of vertices of $T$ will be denoted by $V(T)$. Each point of valence 1 will
be called an \emph{endpoint} of $T$, and the set of such points will be denoted by $\En(T)$. The closure of
a connected component of $T \setminus V(T)$ will be called an \emph{edge of $T$}.

Let $k\in\mathbb{N}$ with $k\ge3$. A tree $T$ will be called a \emph{$k$-star} if $T$ is the union of $k$ edges, which will be
called \emph{branches}, intersecting at a unique vertex of valence $k$, which will be called the \emph{central point}.
A \emph{$1$-star} is a tree consisting of a single point, and a \emph{$2$-star} is an interval.

Given any subset $X$ of a topological space, we will denote by $\Int(X)$ and $\Cl(X)$ the interior and the closure
of $X$, respectively. For a finite set $P$, we will denote its cardinality by $|P|$.

Let $T$ be a tree. Given $X\subset T$ we will define the \emph{convex hull} of $X$, denoted by
$\chull{X}_T$ or simply by $\chull{X}$, as the smallest closed connected subset of $T$ containing $X$. When
$X=\{x,y\}$ we will write $[x,y]$ to denote $\chull{X}$. The notations $(x,y)$, $(x,y]$ and $[x,y)$ will be understood
in the natural way.

A pair $(T,P)$ where $T$ is a tree and $P\subset T$ is a finite subset of $T$ will be called a \emph{pointed tree}.
Two points $x,y$ of $P$ will be called \emph{consecutive} if $(x,y) \cap P = \emptyset$.
Any maximal subset of $P$ consisting only of pairwise con\-sec\-u\-tive points will be called a \emph{discrete component of $(T,P)$}.
See Figures~\ref{exemples} and
\ref{exemples2} for several examples of pointed trees, where each dotted closed curve surrounds a discrete component.

Next we introduce an equivalence relation on pointed trees. Before doing it, let us explain which
features of a pointed tree $(T,P)$ should be preserved inside its class:
\begin{enumerate}
\item[(1)] The number of distinguished points (cardinality of $P$).
\item[(2)] The spatial relations (pairwise relative positions) among the distinguished points, independently of the particular topology of $T$.
\end{enumerate}
While (1) is clear enough, let us precise (2) by showing several examples of pointed trees that should be equivalent
in our definition. See Figure~\ref{exemples}, where the pointed trees $(T_1,P_1)$, $(T_2,P_2)$ and $(T_3,P_3)$ should be equivalent.
See also Figure~\ref{exemples2}, where $(S_1,Q_1)$ and $(S_2,Q_2)$ should be equivalent.

\begin{figure}
\centering
\includegraphics[scale=0.6]{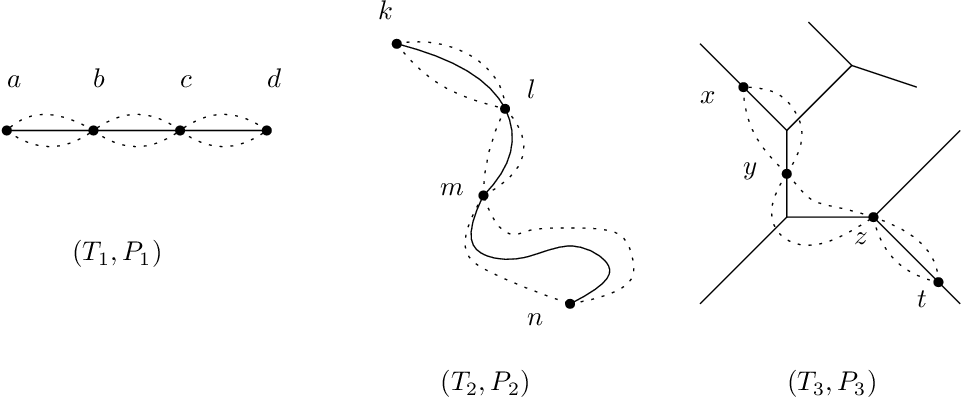}
\caption[fck]{Three pointed trees that should be equivalent. \label{exemples}}
\end{figure}

Looking carefully at the previous examples, the reader may guess that the feature (2) that we want to preserve between two
equivalent pointed trees $(T,P),(S,Q)$ is the following one: there is a bijection $\phi$ between the distinguished sets of points $P$ and $Q$
in such a way that any two points $x,y$ of $P$ are consecutive in $T$ if and only if the corresponding points $\phi(x),\phi(y)$ of $Q$
are consecutive in $S$.

\begin{figure}
\centering
\includegraphics[scale=0.6]{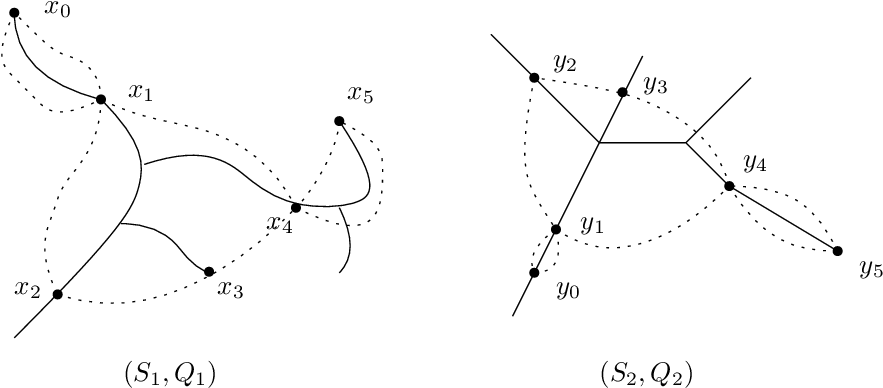}
\caption[fck]{Two pointed trees that should be equivalent. \label{exemples2}}
\end{figure}

This is precisely the suitable definition of our equivalence relation. We will say that two pointed trees $(T,P)$ and $(S,Q)$ are
\emph{spatially equivalent} if there exists a bijection $\map{\phi}{P}[S]$ that preserves discrete components.
Be aware that, in this case, the bijection $\phi$ does not need to be unique, and this fact accounts for the spatial symmetries among pointed trees.
For instance, consider the pointed trees $(T_1,P_1)$ and $(T_3,P_3)$ in Figure~\ref{exemples}. The bijection $\map{\phi}{P_1}[P_3]$ such that $\phi(a)=x$,
$\phi(b)=y$, $\phi(c)=z$, $\phi(d)=t$ preserves discrete components. But the same happens for the bijection $\map{\bar{\phi}}{P_1}[P_3]$ such that $\bar{\phi}(a)=t$,
$\bar{\phi}(b)=z$, $\bar{\phi}(c)=y$, $\bar{\phi}(d)=x$. In this particularly simple example, the sets $P_1$ and $P_3$ are both contained in an interval, and
our definition allows us to do not care about the particular orientation of such intervals.

To see another example of more sophisticated symmetries, consider the trees $(S_1,Q_1)$ and $(S_2,Q_2)$ in Figure~\ref{exemples2}. Here the bijection
$\map{\phi}{Q_1}[Q_2]$ such that $\phi(x_i)=y_i$ for $0\le i\le 5$ preserves discrete components. The same happens for the bijection $\map{\bar{\phi}}{Q_1}[Q_2]$
such that $\bar{\phi}(x_0)=y_5$, $\bar{\phi}(x_1)=y_4$, $\bar{\phi}(x_2)=y_2$, $\bar{\phi}(x_3)=y_3$, $\bar{\phi}(x_4)=y_1$, $\bar{\phi}(x_5)=y_0$,
which transforms the discrete components in this way:
\[ \begin{array}{rcl}
& \bar{\phi} & \\
\{ x_0,x_1 \} & \longrightarrow & \{ y_5,y_4 \} \\
\{ x_4,x_5\} & \longrightarrow & \{ y_1,y_0 \} \\
\{ x_1,x_2,x_3,x_4\} & \longrightarrow & \{ y_4,y_2,y_3,y_1 \}
\end{array} \]

Another interesting symmetry appears via the bijection $\map{\varphi}{Q_1}[Q_2]$
such that $\varphi(x_0)=y_5$, $\varphi(x_1)=y_4$, $\varphi(x_2)=y_3$, $\varphi(x_3)=y_2$, $\varphi(x_4)=y_1$, $\varphi(x_5)=y_0$. Then,
\[ \begin{array}{rcl}
& \varphi & \\
\{ x_0,x_1 \} & \longrightarrow & \{ y_5,y_4 \} \\
\{ x_4,x_5\} & \longrightarrow & \{ y_1,y_0 \} \\
\{ x_1,x_2,x_3,x_4\} & \longrightarrow & \{ y_4,y_3,y_2,y_1 \}
\end{array} \]

From those examples it is clear that what characterizes a class of spatially equivalent pointed trees is just a collection of discrete
components and their connections. In other words, we do not take care of the particular topology of the underlying tree, the particular labelling of the
distinguished points and the spatial symmetries.

Now we are ready to define the notion of a \emph{tree pattern} (or \emph{pattern} for brevity). A triplet $(T,P,f)$ will be called a \emph{model}
if $\map{f}{T}$ is a tree map and $P$ is a finite $f$-invariant set. In particular, if $P$ is a periodic orbit of $f$ then $(T,P,f)$
will be called a \emph{periodic model} (or an \emph{$n$-periodic model} if $|P|=n$ and we want to specify the period). Let $(T,P,f)$ and
$(S,Q,g)$ be two models. We will say that $(T,P,f)$ and $(S,Q,g)$ are \emph{equivalent}, written $(T,P,f)\sim(S,Q,g)$, if and only if:
\begin{enumerate}
\item The pointed trees $(T,P)$ and $(S,Q)$ are spatially equivalent.
\item There exists a bijection $\map{\phi}{P}[Q]$ that preserves discrete components and conjugates the maps $f\evalat{P}$ and $g\evalat{Q}$:
\[ f\evalat{P}=\phi^{-1}\circ g\evalat{Q}\circ\phi. \]
\end{enumerate}

It is easy to see that $\sim$ defines an equivalence relation. A class of models with respect to $\sim$ will be called a \emph{pattern}.
Any representative $(T,P,f)$ of a pattern $\mathcal{P}$ will be said to \emph{exhibit} $\mathcal{P}$, and we will write $\mathcal{P}=[T,P,f]$.
The pattern $\mathcal{P}$ itself will be called \emph{periodic} if a model (in consequence, \emph{any} model) exhibiting $\mathcal{P}$ is periodic.

In Figure~\ref{exemples3} (left, center) we show two periodic models $(S_1,Q_1,f)$ and $(S_2,Q_2,g)$ equivalent by $\sim$, constructed from the
(spatially equivalent) pointed trees $(S_1,Q_1)$ and $(S_2,Q_2)$ in Figure~\ref{exemples2}. We only show the images by $f$ and $g$
of the points in $Q_1$ and $Q_2$, respectively, since this is the only information we need. In
this example, the bijection $\map{\varphi}{Q_1}[Q_2]$ that preserves discrete components and conjugates $f\evalat{Q_1}$ and $g\evalat{Q_2}$
is $\varphi(x_0)=y_5$, $\varphi(x_1)=y_4$, $\varphi(x_2)=y_3$, $\varphi(x_3)=y_2$, $\varphi(x_4)=y_1$, $\varphi(x_5)=y_0$. See a representation of the
corresponding pattern in Figure~\ref{exemples3} (right).

\begin{figure}
\centering
\includegraphics[scale=0.6]{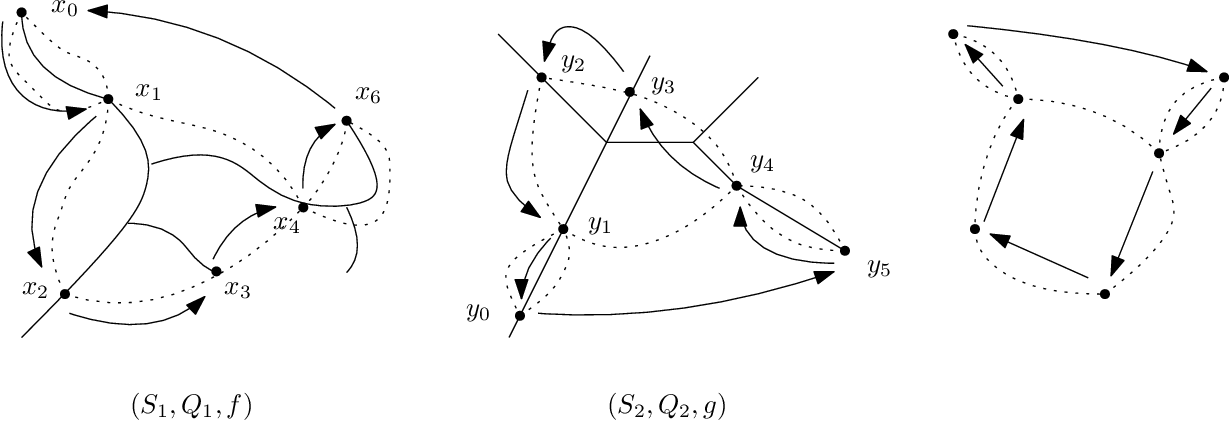}
\caption[fck]{Letf and center: two periodic models exhibiting the same pattern (an arrow from $a$ to $b$ codifies that the image
of $a$ by the corresponding map is $b$). Right: a possible representation of the pattern exhibited by both models. \label{exemples3}}
\end{figure}

An $n$-periodic orbit $P=\{x_i\}_{i=0}^{n-1}$ of a map $\theta$ will be said to be \emph{time labeled} if
$\theta(x_i)=x_{i+1}$ for $0\le i<n-1$ and $\theta(x_{n-1})=x_0$. For instance, the orbit $Q_1$ in Figure~\ref{exemples3}
is time labeled, while the orbit $Q_2$ is not.

Despite the fact that the notion of a discrete component is defined for pointed trees,
by abuse of language we will use the expression \emph{discrete component of a pattern},
which will be understood in the natural way since the number of discrete components
and their relative positions are the same for all models of the pattern. In the same spirit we will use
the expression \emph{point of a pattern}. All patterns considered in this paper will be periodic. By default we will
consider that the points of an $n$-periodic pattern are time labeled with the integers $\{0,1,\ldots,n-1\}$. As an example,
Figure~\ref{exemples4} shows a representation of the 6-periodic pattern of Figure~\ref{exemples3}(right), whose
set of discrete components is $\{\{0,1\},\{4,5\},\{1,2,3,4\}\}$.

In whatever context, a good definition of \emph{pattern} should lead to a theory rich enough to provide relevant information about
the dynamics in the family of maps under consideration. In particular, the theory must be able to give answers to (at least)
the following questions:
\begin{itemize}
\item[(Q1)] For a pattern $\mathcal{P}$, does there exist a number $h(\mathcal{P})$ that measures the dynamical complexity
forced by $\mathcal{P}$, in the sense that $h(\mathcal{P})$ is a lower bound for the dynamical complexity of \emph{any} map exhibiting $\mathcal{P}$?
\item[(Q2)] Is $h(\mathcal{P})$ optimal, in the sense that there is a specially simple map exhibiting $\mathcal{P}$ (let us call it \emph{canonical model of $\mathcal{P}$})
whose dynamical complexity is $h(\mathcal{P})$?
\item[(Q3)] It is possible to construct effectively the canonical model of $\mathcal{P}$ in terms only of the combinatorial data encoded in $\mathcal{P}$?
\item[(Q4)] It is possible to compute effectively the dynamical complexity of the canonical model (and, in consequence, $h(\mathcal{P}$))?
\end{itemize}
A positive answer to the above questions allows us to stratify the set of patterns according to their dynamical complexities. Precisely our definition
of \emph{tree pattern} satisfies this requirement. Let us see it.

\begin{figure}
\centering
\includegraphics[scale=0.55]{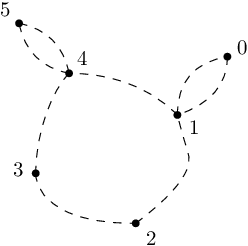}
\caption[fck]{A representation of the 6-periodic pattern in Figure~\ref{exemples3}(right).\label{exemples4}}
\end{figure}

A usual way of measuring the dynamical complexity of a map $\map{f}{X}$ of a compact metric space
is in terms of its \emph{topological entropy} \cite{AKM}. It is a non-negative real number (or
infinity), denoted as $h(f)$, that measures how the iterates of the map mix the points of $X$. For several compact metric
spaces, a map with positive entropy is \emph{chaotic} in the sense of Li and Yorke \cite{li-yorke,blan}. By contrast, the dynamics of a map with zero
topological entropy is essentially trivial.

In view of what have been said above, for a tree pattern $\mathcal{P}$ it is natural to define the \emph{topological entropy of $\mathcal{P}$} as
\begin{equation}\label{hP}
h(\mathcal{P}):=\inf\{h(f):\ (T,P,f)\mbox{ is a model exhibiting }\mathcal{P}\}.
\end{equation}

\begin{remark}[{\bf Standing assumption}]\label{pruned}
For a model $(T,P,f)$, it is well known that the restriction of $f$ to the convex hull of $P$, i.e. $\map{r\circ f}{\chull{P}_T}$ where $\map{r}{T}[\chull{P}_T]$ is the
natural retraction, has entropy smaller than or equal to $h(f)$. So, in the definition (\ref{hP}) it is enough to consider only models $(T,P,f)$ such that the endpoints
of the tree $T$ belong to $P$. In what follows, when we say that $(T,P,f)$ is a model, we implicitly assume that $\En(T)\subset P$.\qed
\end{remark}

Once the topological entropy of a pattern $h(\mathcal{P})$ has been defined (question Q1 above), let us see that there exists a representative of $\mathcal{P}$
whose entropy is precisely $h(\mathcal{P})$, so giving also a positive answer to question Q2.

Let $(T,P,f)$ be a model. We will say that $f$ is \emph{$P$-monotone} if $f\evalat{[a,b]}$ is
monotone (as an interval map) for any pair $\{a,b\}\subset P$ contained in a single discrete component of $(T,P)$.
The model $(T,P,f)$ will then be said to be \emph{monotone}.

The following result is Proposition 4.2 of \cite{aglmm}. It states that if $(T,P,f)$ is a monotone model
then the image of every vertex is uniquely determined and is either a vertex or belongs to $P$.

\begin{proposition}\label{4.2 aglmm}
Let $(T,P,f)$ be a monotone model and let $v\in V(T)\setminus P$. If $a,b,c\in P$ are contained in a single discrete
component and $v\in[a,b]\cap[a,c]\cap[b,c]$, then $f(v)$ is the only point contained in $[f(a),f(b)]\cap[f(a),f(c)]\cap[f(b),f(c)]$.
\end{proposition}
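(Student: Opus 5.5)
The plan is to use two facts: monotonicity of $f$ on the three arcs $[a,b]$, $[a,c]$, $[b,c]$ places $f(v)$ in the triple intersection, and tree geometry shows that this intersection is a single point.

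\textbf{Step 1: $f(v)$ lies in the triple intersection.} Since $a,b$ lie in a single discrete component, $f\evalat{[a,b]}$ is monotone by $P$-monotonicity. A monotone continuous map of the arc $[a,b]$ into the tree $T$ has connected image containing $f(a)$ and $f(b)$. Monotone here means that the preimage of every point is connected, and the paper's convention is that the image of an arc is the arc joining the images of its endpoints. Hence $f([a,b])=[f(a),f(b)]$. The definition of monotone model gives this directly, because a monotone interval map sends the arc onto the arc between the endpoint images without backtracking. As $v\in[a,b]$, we get $f(v)\in[f(a),f(b)]$. The same argument applied to $[a,c]$ and $[b,c]$ gives
\[ f(v)\in[f(a),f(b)]\cap[f(a),f(c)]\cap[f(b),f(c)]. \]

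\textbf{Step 2: the intersection is nonempty and a single point.} For any three points $x,y,z$ of a tree, the set $[x,y]\cap[x,z]\cap[y,z]$ consists of exactly one point, the median of $x,y,z$. To see this:
\begin{enumerate}
\item Since $T$ is uniquely arcwise connected, $[x,y]\cap[x,z]$ is an arc $[x,m]$ starting at $x$ (the common initial segment).
\item Then $[y,z]=[y,m]\cup[m,z]$, with these two arcs meeting only at $m$.
\item Hence $m$ lies in all three arcs.
\item Any point $p$ in the triple intersection lies in $[x,m]\cap([y,m]\cup[m,z])$, and this set is $\{m\}$ by the choice of $m$ as the branching point.
\end{enumerate}
This argument holds even when the images are degenerate, for example when $f(a)=f(b)$, although distinct points of $P$ have distinct images if $P$ is a periodic orbit. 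Applying it with $x=f(a)$, $y=f(b)$, $z=f(c)$ shows that $f(v)$ is the only point of the triple intersection.

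\textbf{Main obstacle.} The delicate step is justifying $f([a,b])=[f(a),f(b)]$, i.e.\ that $P$-monotone means exactly that $f$ maps the arc injectively, or at least monotonically, onto the arc between the endpoint images, with no excursion into side branches. Once this is settled, Step 2 is standard tree topology, and the hypotheses $v\notin P$ and $v\in V(T)$ play no role beyond fixing the setting.
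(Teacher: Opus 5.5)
The paper does not prove this statement itself; it quotes it as Proposition~4.2 of \cite{aglmm}. Your argument is the standard proof and is correct: $P$-monotonicity on $[a,b]$, $[a,c]$, $[b,c]$ places $f(v)$ in each image arc, and in a tree the three arcs joining three points meet in exactly one point. The one step to tighten is Step~1, where connectedness of $f([a,b])$ only gives $f([a,b])\supseteq[f(a),f(b)]$, and the reverse inclusion is asserted rather than derived. It follows from monotonicity in either reading. In the sense of \cite{aglmm}, $f|_{[a,b]}$ is a monotone surjection onto the arc $f([a,b])$, so that arc has endpoints $f(a),f(b)$. Under the connected-fibres reading, suppose $f(v)\notin[f(a),f(b)]$; then any point strictly between $f(v)$ and $[f(a),f(b)]$ would have preimages on both sides of $v$ but not at $v$, a contradiction.
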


It is implicit in the previous result that the determination of the image $f(v)$ can be done by using \emph{any}
triplet $\{a,b,c\}$ of three closest points of $P$ around $v$. This condition is very restrictive and, a priori, there is no
reason to expect that a pattern $\mathcal{P}$ has a $P$-monotone representative. In fact, in general is it not true that,
\emph{for a given tree $S$}, there is a monotone model $(S,P,f)$ of $\mathcal{P}$. For instance, if $S$ is a 4-star,
it is not possible to have a monotone model $(S,P,f)$ of the pattern shown on Figure~\ref{patintro} (left). To see it,
let us call $y$ the central point of $S$ and assume that $\map{f}{S}$ is $P$-monotone. Using Proposition~\ref{4.2 aglmm}
with $a=1,b=3,c=4$ yields $f(y)\in [2,4]\cap[2,5]\cap[4,5]=\{y\}$ (see Figure~\ref{patintro} (right)), so $f(y)=y$.
Instead, taking $a=0,b=2,c=4$ yields $f(y)\in [1,3]\cap[1,5]\cap[3,5]=\{1\}$, so $f(y)=1$. This contradiction tells us that
the particular topology of a 4-star does not permit the existence of a monotone model of $\mathcal{P}$.
However, since the topology of the underlying tree is not fixed inside the class of models representing $\mathcal{P}$,
the question arises whether there exists another topology permitting it. The answer is affirmative:
Theorem~A of \cite{aglmm} states that for every pattern $\mathcal{P}$ \emph{there exists} a tree $T$ and a map
$\map{f}{T}$ having an invariant set $P$ such that $(T,P,f)$ is a monotone model of $\mathcal{P}$. In
Figure~\ref{patintro} (center) we show a monotone model $(T,P,f)$ of the pattern $\mathcal{P}$,
where $T$ is the tree in the form of a letter ``H''. The reader may check that using Proposition~\ref{4.2 aglmm} with
any combination $\{a,b,c\}$ satisfying the hypothesis yields $f(v)=w$ and $f(w)=1$. Theorem~A of \cite{aglmm} also states
that, for every monotone model $(T,P,f)$ of $\mathcal{P}$, $h(f)=h(\mathcal{P})$.

\begin{figure}
\centering
\includegraphics[scale=0.6]{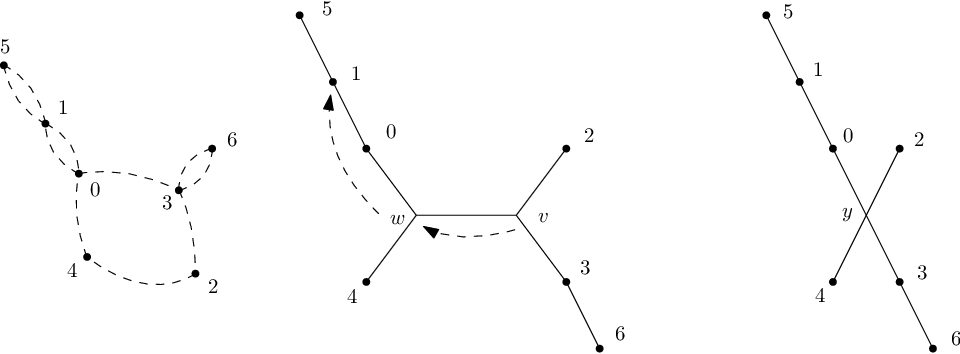}
\caption[fck]{Left: a 7-periodic pattern $\mathcal{P}$. Center: the canonical model of $\mathcal{P}$. Right: the points of
$\mathcal{P}$ embedded on a 4-star.\label{patintro}}
\end{figure}

There exists a special class of monotone models, satisfying several extra minimality properties
that we omit here, called \emph{canonical models}. Theorem~B of \cite{aglmm} states
that every pattern has a canonical model. Moreover, the canonical model of a pattern is essentially unique.
Summarizing, we have the following result.

\begin{theorem}\label{A-aglmm}
Let $\mathcal{P}$ be a pattern. Then the following statements hold.
\begin{enumerate}
\item There exists a canonical model of $\mathcal{P}$.
\item The canonical model $(T,P,f)$ of $\mathcal{P}$ satisfies $h(f)=h(\mathcal{P})$.
\end{enumerate}
\end{theorem}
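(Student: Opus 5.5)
This theorem summarizes Theorems~A and~B of \cite{aglmm}; the plan is to rebuild it from two ingredients. The first is the existence of a monotone model, constructed combinatorially from $\mathcal{P}$. The second is a comparison of entropies through Markov graphs.

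\textbf{Existence (part (a)).} Start from any model $(S,Q,g)$ exhibiting $\mathcal{P}$. One exists: embed the discrete components as stars glued along shared points, and extend $g\evalat{Q}$ continuously. Then proceed by induction on the number of discrete components.
\begin{enumerate}
\item Inside each discrete component $C$, take the convex hull $\chull{C}$, a star-like subtree with endpoints in $C$.
\item The constraint of Proposition~\ref{4.2 aglmm} says that the branching point of each triple $\{a,b,c\}\subset C$ must map to the branching point of $\{f(a),f(b),f(c)\}$. This may be inconsistent across triples, as in the 4-star example.
\item Resolve the inconsistency by splitting the central vertex into several vertices. Define an equivalence on $C$: two points of $C$ lie on the same side of a new vertex according to how their images are separated in the hull of $f(C)$. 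Since $f(C)$ need not lie in a single component, iterate this along the orbit.
\item Finiteness of $P$ and periodicity guarantee that the refinement stabilizes, yielding a tree $T$ on which the image of every vertex is well defined.
\item Define $f$ on $P$ and on the vertices, then extend linearly (monotonically) on each edge. This gives $P$-monotonicity.
\end{enumerate}
The canonical model is then obtained by imposing the extra minimality properties: collapse any edge whose collapse preserves monotonicity, and fix vertex images uniquely. Uniqueness up to conjugacy follows from the forced vertex images.

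\textbf{Entropy (part (b)).} Build the $P$-basic intervals (edges of $T$ between consecutive points of $P$ and vertices) and the associated Markov graph of the monotone model. Monotonicity on each basic interval makes $f$ a Markov map whose entropy equals $\log$ of the spectral radius of the transition matrix. For any other model $(S,Q,g)$ of $\mathcal{P}$, reduced as in Remark~\ref{pruned}, every basic path $[a,b]$ with $a,b$ consecutive in $Q$ must $g$-cover at least the corresponding $f$-coverings. This holds because $g([a,b])\supset[g(a),g(b)]$, and the corresponding paths in $T$ and $S$ cross the same points of $P$. Vertices of $T$ not in $P$ need a separate check. The argument is that the combinatorial "branching" data are the same in both trees: any triple of points determines a branching point in both, and $g$ must cover the corresponding hull. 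Hence the Markov graph of $f$ embeds, up to a loop-counting argument, in that of $g$. Standard results (Misiurewicz–Szlenk, counting laps or the lower bound via covering graphs) then give $h(g)\ge h(f)$. Since $f$ itself exhibits $\mathcal{P}$, we get $h(f)=h(\mathcal{P})$.

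\textbf{Main obstacle.} I expect the hardest step to be the covering comparison at the vertices that are not in $P$. For such a vertex, the $f$-image is a vertex that has no analogue in $S$. One must argue via triples, using Proposition~\ref{4.2 aglmm}, that the $g$-image of a suitable point of $S$ still forces the needed coverings. My first attempt would be to compare signed coverings of pairs of points in $P$ only, showing that the "$P$-pair" Markov graph already carries the entropy of $f$.
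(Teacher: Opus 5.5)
The paper does not prove this statement. It quotes Theorems~A and~B of \cite{aglmm}: monotone models exist, every monotone model $(T,P,f)$ has $h(f)=h(\mathcal{P})$, and canonical models (which are monotone) exist. Measured against that, your part (b) has a genuine gap, exactly at the obstacle you flag, and that step is the whole content of (b): proving $h(g)\ge h(f)$ for an \emph{arbitrary} model $(S,Q,g)$. Your claim that the Markov graph of $f$ ``embeds'' in that of $g$ has no meaning. The map $g$ is only continuous, so it has no Markov graph, and the Misiurewicz--Szlenk lap-counting formula needs piecewise monotonicity. Even for a Markov $g$, the $(P\cup V(T))$-basic intervals ending at vertices of $T$ outside $P$ have no counterpart in $S$. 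In Figure~\ref{patintro} the canonical tree has two branching points $v,w$ while the $4$-star has one. The triples $\{1,3,4\}$ and $\{0,2,4\}$ share the median $y$ in the star but cannot share a median in $T$; that is exactly the obstruction computed in Section~\ref{S1}. Nor can you apply a covering-graph entropy bound directly to the arcs $[a,b]$ of basic paths in $S$. Such bounds need arcs with pairwise disjoint interiors, and the basic paths of one discrete component overlap.

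Your fallback of working only with pairs of points of $P$ is the right route, but it needs two arguments you do not give.

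(i) In a monotone model, Proposition~\ref{4.2 aglmm} and monotonicity make $f$ map $[a,b]$ onto $[f(a),f(b)]$, covering each $(P\cup V(T))$-basic interval of $[f(a),f(b)]$ exactly once. The basic paths covered by $\{a,b\}$ are the consecutive pairs of $P$ along $[f(a),f(b)]$. So if $E$ is the $0$--$1$ matrix recording which basic intervals lie in which $[a,b]$, then $EM=M_{\mathcal P}E$. Since $E$ has no zero rows or columns, a nonnegative right eigenvector $u$ of $M$ gives $M_{\mathcal P}Eu=\rho(M)Eu$ with $Eu\neq 0$, and dually for left eigenvectors of $M_{\mathcal P}$. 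Hence $\rho(M)=\rho(M_{\mathcal P})$, and $h(f)=\log\max\{\rho(M_{\mathcal P}),1\}$ by Proposition~\ref{markov}.

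(ii) In any model $(S,Q,g)$, the basic paths covered by $\{a,b\}$ are again the consecutive pairs of $Q$ along $[g(a),g(b)]$, so they have pairwise disjoint interiors. Pull back along paths of the $\mathcal{P}$-path graph, each time choosing a subarc with no interior preimage of the endpoints of the target. Distinct paths then give subarcs with disjoint interiors. The closed paths of length $n$ at a suitable basic path grow like $\rho(M_{\mathcal P})^n$, so their subarcs form a horseshoe for $g^n$, and $h(g)\ge\log\rho(M_{\mathcal P})$. Together, (i) and (ii) are (\ref{patent}), which the paper also only quotes.

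Part (a) is likewise only a sketch. You must show that replacing the split system of each $\langle C\rangle$ by the pull-back under $f$ of the split system the current tree induces on $f(C)$ can only add splits, i.e.\ that it is monotone for refinement. Starting from stars, the process then stabilizes because each component admits finitely many splits; periodicity is irrelevant. At the fixed point each $\langle C\rangle$ maps monotonically onto $\langle f(C)\rangle$. Your edge-collapsing recipe for the canonical minimality conditions cannot be checked, since the paper does not define those conditions.
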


The proof of Theorem~\ref{A-aglmm} is constructive and gives a finite algorithm to
construct the canonical model of any pattern. Summarizing, questions Q2 and Q3 have a positive answer.

We discuss question Q4 in the next section, since it will bring us the opportunity to explain the scope
of this paper and to present its first main result.

\section{From Topology to Combinatorics. First main result}\label{S2}
Is $h(\mathcal{P})$ computable for any pattern $\mathcal{P}$? The answer is positive. One possibility
is to construct explicitly the canonical model $(T,P,f)$ of $\mathcal{P}$. Now, Proposition~\ref{4.2 aglmm}
states that $P\cup V(T)$ is $f$-invariant. Moreover, since $f$ is monotone between any pair of consecutive
points of $P$, in particular $f$ is monotone on
each connected component of $T\setminus(P\cup V(T))$, which is an interval. Under these conditions, the triplet
$(T,P\cup V(T),f)$ is a particular case of what is called a \emph{Markov model}. Indeed, a model $(T,Q,f)$ is called
\emph{Markov} if $Q\supset V(T)$ and $f$ is monotone on the closure of every connected component of
$T\setminus Q$ (which is called a \emph{$Q$-basic interval}). In this situation it is well known that
we can construct a \emph{Markov matrix} as follows. Consider a labeling $I_1,I_2,\ldots I_k$ of all
$Q$-basic intervals. The \emph{Markov graph of $(T,Q,f)$} is a combinatorial directed graph whose
vertices are the basic intervals and there is an arrow from $I_i$ to $I_j$ if and only if $f(I_i)\supset I_j$.
The associated \emph{Markov matrix $M$} is the corresponding transition matrix, i.e.
$M=(m_{ij})$ with $m_{ij}=1$ if $f(I_i)\supset I_j$ and $m_{ij}=0$ otherwise. The following result is standard
(see \cite{bgmy} or Theorem~4.4.5 of \cite{biblia}).

\begin{proposition}\label{markov}
Let $(T,Q,f)$ be a Markov model and let $M$ be the Markov matrix of $(T,Q,f)$. Then,
$h(f) = \log\max\{\rho(M),1\}$, where $\rho(M)$ denotes the spectral radius of $M$.
\end{proposition}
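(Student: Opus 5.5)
Since the paper cites this as standard (Theorem~4.4.5 of \cite{biblia}), I would reproduce the classical argument in two inequalities: a lower bound via horseshoe-type counting of periodic orbits (or lap/cover counting), and an upper bound via a cover argument. Throughout, let $I_1,\dots,I_k$ be the $Q$-basic intervals and $M$ the Markov matrix. The key combinatorial fact is the following. Since $f$ is monotone on each $I_i$ and $Q$ is $f$-invariant, $f(I_i)$ is an interval, or a point, whose endpoints lie in $Q$. Because $V(T)\subset Q$, this interval is a union of basic intervals. By induction on $m$, the entry $(M^m)_{ij}$ counts the admissible paths $i=i_0\to i_1\to\dots\to i_m=j$. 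For each such path there is a closed subinterval $J\subset I_i$ with $f^r(J)\subset I_{i_r}$ for all $r$ and $f^m(J)=I_j$. This is obtained by pulling back with the monotone branches, exactly as in the interval case.

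For the \textbf{lower bound}, assume $\rho(M)>1$; otherwise the bound $h(f)\ge 0$ is trivial. I would show that the number of distinct sequences of basic intervals realized by orbit segments of length $m$ grows like $\rho(M)^m$. Using the intervals $J$ above, the path space of the Markov graph injects into the orbits of $f$ up to bounded ambiguity. The ambiguity comes only from shared endpoints in $Q$, a finite set, and it is negligible. Hence the subshift of finite type $\Sigma_M$ is a factor of a subsystem of $f$ up to finite-to-one identifications. Its entropy is $\log\rho(M)$ by Perron--Frobenius, so $h(f)\ge\log\rho(M)$. Concretely, I would pick $\epsilon$ smaller than half the minimal distance between non-adjacent basic intervals. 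Then distinct paths with disjoint interiors give $(m,\epsilon)$-separated points, after discarding the boundedly many coincidences at endpoints. Irreducibility is not needed, since I can restrict to an irreducible component achieving $\rho(M)$.

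For the \textbf{upper bound}, I would use the Misiurewicz--Szlenk style estimate, which bounds entropy by the growth rate of monotone pieces. The sets $J$ for paths of length $m$ cover $T$, and $f^m$ is monotone on each of them. Their number is $\sum_{ij}(M^m)_{ij}\le C\rho(M)^m m^{k}$. A piecewise monotone tree map has entropy at most the exponential growth rate of the number of laps of $f^m$. I would justify this via Bowen's cover entropy, noting that a monotone piece of $f^m$ meets a bounded number of $\epsilon$-pieces at each time step. This gives $h(f)\le\log\max\{\rho(M),1\}$; the maximum with $1$ covers nilpotent or permutation-type $M$, where the lap count grows subexponentially.

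The main obstacle is the upper bound on a tree rather than an interval. The lap-counting inequality relies on monotone images being arcs, and tree vertices could break this. This is exactly where $V(T)\subset Q$ is used: each basic interval is an arc containing no vertex in its interior, so each monotone branch maps arcs onto arcs lying in unions of basic intervals. Once that is in place, the interval proof goes through verbatim.
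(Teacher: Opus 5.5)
Your two-inequality strategy (lap/cover counting for the upper bound, symbolic dynamics for the lower bound) is the classical one behind the results the paper simply cites; the paper itself gives no proof. However, the lower bound as you wrote it does not go through.

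Choosing $\epsilon$ below half the distance between \emph{non-adjacent} basic intervals gives no control over two orbits that, at the times where their paths differ, lie in adjacent basic intervals near the common endpoint. Such points can stay $\epsilon$-close up to time $m$. For the full tent map on $[0,1]$ with $Q=\{0,1/2,1\}$ there are no non-adjacent basic intervals at all, so your condition on $\epsilon$ is empty. Points of two adjacent cylinders $J_\omega, J_{\omega'}$ taken close enough to their common endpoint are then not $(m,\epsilon)$-separated. These are near-coincidences, not finitely many exact ones, so discarding boundedly many points does not help.

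Likewise, finiteness of $Q$ is not what makes the coding finite-to-one. Infinitely many points have orbits that eventually enter $Q$, and they then stay in $Q$ forever, so a naive ambiguity arises at every later time. What bounds the number of itineraries of a point is monotonicity. If $q=f^r(x)\in Q$, then $q$ is an endpoint of $I_{\omega_r}$. So $f(I_{\omega_r})$ is an arc (or a point) having $f(q)$ as an endpoint, and it contains at most one basic interval through $f(q)$. Hence $\omega_{r+1}$ is forced, and every point has at most $\max\{2,v\}$ itineraries, where $v$ is the maximal valence in $T$.

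Two standard repairs are available.

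(i) Pull back \emph{minimally} along each of the $(M^m)_{ii}$ paths from $i$ to $i$. Minimality matters because $f$ may be constant on subarcs, so arbitrary choices of your $J$'s can overlap. This gives closed subarcs of $I_i$ with pairwise disjoint interiors, each mapped by $f^m$ onto $I_i$. Keeping every other one leaves at least $(M^m)_{ii}/2$ pairwise disjoint ones. This yields a closed $f^m$-invariant set semiconjugate onto a full shift, so $h(f)\ge\frac1m\log\bigl((M^m)_{ii}/2\bigr)$. Since $\limsup_m\max_i\bigl((M^m)_{ii}\bigr)^{1/m}=\rho(M)$, you get $h(f)\ge\log\rho(M)$.

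(ii) Alternatively, take the closed set $Z=\{(x,\omega)\in T\times\Sigma_M : f^r(x)\in I_{\omega_r}\ \forall r\ge 0\}$. It is invariant under $f\times\sigma$ and maps onto $\Sigma_M$ by compactness. By the forcing argument above, it has uniformly finite fibres over $T$. Bowen's inequality then gives $\log\rho(M)\le h(f)$.

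Your upper bound is essentially right, with two corrections.

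First, the cylinders $J$ do not cover $T$ when a basic interval is mapped to a point, i.e.\ when $M$ has a zero row. This situation occurs in the paper: the map at the end of the proof of Lemma~\ref{2n} collapses $[v,0]$ to a point. You must also count the paths that stop on entering such an interval; this does not change the growth rate.

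Second, what monotonicity buys in the cover argument is not just that each $f^r(J)$ meets boundedly many $\epsilon$-pieces; that alone gives $N^m$ cells. The point is that their pullbacks to $J$ are subarcs, so the $m$-step refinement cuts $J$ into $O(mN)$ cells.

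The tree structure itself is not the real obstacle: arcs go to arcs by the definition of monotone. The delicate points are flat pieces and adjacency of basic intervals.
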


From Proposition~\ref{markov} and the previous paragraph, the entropy of a pattern $\mathcal{P}$ is computable.
Note that this procedure is ``topological'' rather than ``combinatorial'', since it is based on
the particular topology of the canonical model of $\mathcal{P}$. However, the algorithm to construct the canonical
model is time consuming (the steps to construct the particular tree $T$ from the combinatorial data of the pattern
are far from being trivial). So, one may wonder whether there is a purely combinatorial alternative to compute
$h(\mathcal{P})$. Let us see that this is the case.

Let $(T,P,f)$ be (\emph{any}) model of $\mathcal{P}$. A \emph{basic path} is any pair
$\{a,b\}\subset P$ contained in a discrete component. Let $\{\pi_1,\pi_2,\ldots,\pi_k\}$ be the set
of basic paths of $(T,P)$. We will say that $\pi_i$ \emph{$f$-covers} $\pi_j$,
denoted by $\pi_i\rightarrow\pi_j$, whenever $\pi_j\subset \chull{f(\pi_i)}_T$. The
\emph{$\mathcal{P}$-path graph} is the combinatorial directed graph whose vertices are in one-to-one
correspondence with the basic paths of $(T,P)$, and there is an arrow from the vertex $i$ to the vertex
$j$ if and only if $\pi_i$ $f$-covers $\pi_j$. The associated transition matrix, denoted by $M_\mathcal{P}$,
will be called the \emph{path transition matrix of $\mathcal{P}$}. Note that the definitions of
the $\mathcal{P}$-path graph and the matrix $M_{\mathcal{P}}$ are independent of the particular
choice of the model $(T,P,f)$. Thus, they are well-defined pattern invariants and it is not necessary
to construct the canonical model of $\mathcal{P}$. One of the main results of \cite{aglmm} states that
\begin{equation}\label{patent}
 h(\mathcal{P}) =  \log\max\{\rho(M_{\mathcal{P}}),1\}.
\end{equation}

Once we know that the entropy of a pattern is computable, several natural questions arise.
For instance, it is possible to describe the shape (configuration of discrete components) of the $n$-periodic patterns with
entropy zero \cite{aglmm,reducibility}? For any $n$, which is the $n$-periodic pattern with maximum entropy
\cite{KS,GT,GZ,ajk,ajkm}? And which is the one with minimum positive entropy \cite{ajm,jmr}?

The characterization of patterns of entropy zero was first given in \cite{aglmm} (see also \cite{Blokh}), and another
description was proven to be equivalent in \cite{reducibility}. Both characterizations, however, are more topological than
combinatorial, since they require to construct the canonical model of the pattern. Precisely, the first main result of this
paper is a purely combinatorial description of the periodic patterns of entropy zero. Let us present it.

We start by reviewing the description of zero entropy patterns given in \cite{reducibility}. A pattern will be said to be
\emph{trivial} if it has only one discrete component. Clearly, the entropy of a trivial pattern is zero. The structure of zero entropy
periodic patterns is related to the existence of a \emph{block structure}, a notion that is classic in the field of Combinatorial Dynamics.
In the interval case, the Sharkovskii's \emph{square root construction}
\cite{shar} is an early example of a block structure. The concept of \emph{extension}, first appeared in \cite{block2}, gives rise
to some particular cases of block structures. The notion of \emph{division}, introduced in \cite{lmpy} for interval periodic orbits and
generalized in \cite{ay1} to study the entropy and the periodic structure of tree maps, is also a particular case of block structure.

Let $\mathcal{P}$ be a nontrivial $n$-periodic pattern with $n\ge3$ and let $(T,P,f)$ be (any) model exhibiting $\mathcal{P}$.
For $n>p\ge2$, we will say that $\mathcal{P}$ \emph{has a $p$-block structure} if there exists a partition $P=P_0\cup P_1\cup\ldots\cup P_{p-1}$
such that $f(P_i)=P_{i+1\bmod p}$ for $i\ge0$, and $\chull{P_i}_T\cap P_j=\emptyset$ for $i\ne j$ (from the equivalence relation which defines
the class of models exhibiting $\mathcal{P}$, it easily follows that this condition is independent of the model representing $\mathcal{P}$).
In this case, $p$ is a strict divisor of $n$ and $|P_i|=n/p$ for $0\le i<p$. The sets $P_i$ will be called \emph{blocks}, and the blocks will be said to be
\emph{trivial} if each $P_i$ is contained in a single discrete component of $\mathcal{P}$ (equivalently, the pattern
$[\chull{P_i}_T,P_i,f^p]$ is trivial for any $i$). Note that $\mathcal{P}$ can have several block structures, but only one $p$-block structure for
any given divisor $p$ of $n$.

\begin{remark}[{\bf Standing convention}]\label{standing2}
Let $\mathcal{P}$ be an $n$-periodic pattern whose points are time labeled as $\{0,1,\ldots,n-1\}$. When $\mathcal{P}$ has
a block structure of $p$ blocks $P_0\cup P_1\cup\ldots\cup P_{p-1}$, by convention we will always assume that the time labels
of the blocks have been chosen in such a way that $0\in P_0$.\qed
\end{remark}

In Figure~\ref{ull2blocksV2} we show an example of a 8-periodic pattern $\mathcal{P}$ admitting
a 4-block structure given by $P_0=\{0,4\}$, $P_1=\{1,5\}$, $P_2=\{2,6\}$, $P_3=\{3,7\}$
and also a 2-block structure given by $Q_0=\{0,2,4,6\}$, $Q_1=\{1,3,5,7\}$. In both cases, the blocks are trivial.

\begin{figure}
\centering
\includegraphics[scale=0.65]{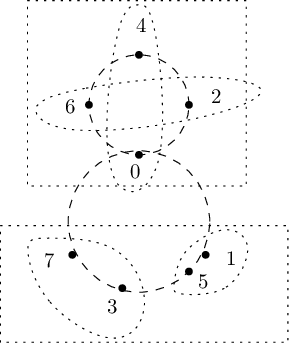}
\caption[fck]{An 8-periodic pattern $\mathcal{P}$ admitting two block structures of trivial blocks.
\label{ull2blocksV2}}
\end{figure}

Let $\mathcal{P}$ be an $n$-periodic pattern with a block structure $P_0\cup P_1\cup\cdots P_{p-1}$. If $(T,P,f)$ is the canonical
model of $\mathcal{P}$ and $\chull{P_i}_T\cap\chull{P_j}_T=\emptyset$ whenever $i\ne j$, then we will say
that the structure is \emph{separated}.

Let $(T,P,f)$ be the canonical model of a pattern $\mathcal{P}$. Let $P=P_0\cup P_1\cup\ldots\cup
P_{p-1}$ be a separated $p$-block structure for $\mathcal{P}$. Then,
$f(\chull{P_i})=\chull{P_{i+1\bmod p}}$. The \emph{skeleton of $\mathcal{P}$}
is the $p$-periodic pattern $\mathcal{S}$ obtained by collapsing each convex hull $\chull{P_i}$
to a point. More precisely, consider the tree $S$ obtained from $T$ by collapsing each tree
$\chull{P_i}$ to a point $x_i$. Let $\map{\kappa}{T}[S]$ be the
standard projection, which is bijective on $T\setminus\cup_i
\chull{P_i}$ and satisfies $\kappa(\chull{P_i})=x_i$. Now set
$Q:=\kappa(P)=\{x_0,x_1,\ldots,x_{p-1}\}$ and define $\map{\theta}{Q}$ by
$\theta(x_i)=x_{i+1\bmod p}$. The $p$-periodic pattern $[S,Q,\theta]$ will be called
the \emph{skeleton} of $\mathcal{P}$.

As an example of construction of a skeleton, consider the 8-periodic pattern consisting of two discrete components
$\{0,2,6\}$, $\{0,1,3,4,5,7\}$ (Figure~\ref{topcol2}, left). Then, $P_0=\{0,4\}$, $P_1=\{1,5\}$, $P_2=\{2,6\}$, $P_3=\{3,7\}$ defines a
structure of 4 trivial blocks. After constructing the canonical model $(T,P,f)$, which is shown in Figure~\ref{topcol2} (top, center),
one gets that $\chull{P_i}_T\cap\chull{P_j}_T=\emptyset$ when $i\ne j$. Thus, the structure is separated and it makes sense to consider
the corresponding 4-periodic skeleton $\mathcal{S}$, shown in Figure~\ref{topcol2} (top, right).

The entropies of a pattern $\mathcal{P}$ with a separated structure of trivial blocks and its associated
skeleton coincide, as the following result (a reformulation of Proposition~8.1 of \cite{aglmm}) states.

\begin{proposition}\label{81}
Let $\mathcal{P}$ be a pattern with a separated structure of trivial blocks and let $\mathcal{S}$ be the associated
skeleton. Then, $h(\mathcal{S})=h(\mathcal{P})$.
\end{proposition}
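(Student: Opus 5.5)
We prove $h(\mathcal{S})=h(\mathcal{P})$ by comparing Markov matrices. Let $(T,P,f)$ be the canonical model of $\mathcal{P}$. It is monotone, $P\cup V(T)$ is $f$-invariant (Proposition~\ref{4.2 aglmm}), and so $(T,P\cup V(T),f)$ is a Markov model with $h(f)=h(\mathcal{P})$ by Theorem~\ref{A-aglmm}. The plan is to turn this model into a model of $\mathcal{S}$ on the collapsed tree $S$ whose Markov matrix is a principal submatrix of that of $f$, and then argue that the rows discarded in this way carry no entropy.

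\emph{Step 1: the blocks contribute no entropy.} Since each $P_i$ lies in a single discrete component, $\chull{P_i}_T$ contains no points of $P\setminus P_i$ in its interior. Each $P$-basic path inside $\chull{P_i}$ is a pair of the same discrete component, and $f$ is monotone on it. Since $f(\chull{P_i})=\chull{P_{i+1}}$, the restriction $f^p|_{\chull{P_i}}$ is a $P_i$-monotone model of the trivial pattern $[\chull{P_i},P_i,f^p]$.

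A trivial pattern has a single discrete component, so each basic path $f^p$-covers at most one basic path. Its path transition matrix $M$ therefore has row sums at most $1$, so $\rho(M)\le1$ and, by (\ref{patent}), its entropy is zero. Because this model is monotone, Theorem~\ref{A-aglmm}(b) gives $h(f^p|_{\chull{P_i}})=0$.

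In matrix terms, order the $(P\cup V(T))$-basic intervals so that those inside $\bigcup_i\chull{P_i}$ come first. Since $f(\bigcup_i\chull{P_i})=\bigcup_i\chull{P_i}$, the Markov matrix takes the block triangular form
\[ M=\begin{pmatrix} A & 0\\ C & B\end{pmatrix}, \]
where $A$ encodes $f$ on the union of the hulls. Moreover $\rho(A)^p=\rho(A^p)\le 1$, because $A^p$ decomposes into the Markov matrices of $f^p$ on each $\chull{P_i}$. Hence $\rho(M)=\max\{\rho(A),\rho(B)\}$, and therefore $h(\mathcal{P})=\log\max\{\rho(B),1\}$.

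\emph{Step 2: identify $B$ with a Markov matrix for the skeleton.} Because the structure is separated, $\kappa$ is a homeomorphism off the hulls. Define $g\colon S\to S$ by $g(\kappa(x))=\kappa(f(x))$. This is well defined since $f(\chull{P_i})=\chull{P_{i+1}}$, and it is continuous.

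The set $Q'=\kappa(P\cup V(T))$ contains $Q\cup V(S)$. The $Q'$-basic intervals of $S$ are exactly the images under $\kappa$ of the basic intervals of $T$ not contained in any hull, and $g$ is monotone on each of them. For two such intervals $I,J$ we have $f(I)\supset J$ if and only if $g(\kappa I)\supset\kappa J$. Thus $(S,Q',g)$ is a Markov model of $\mathcal{S}$ with Markov matrix $B$, and Proposition~\ref{markov} gives $h(g)=\log\max\{\rho(B),1\}=h(\mathcal{P})$. Since $g$ exhibits $\mathcal{S}$, this yields $h(\mathcal{S})\le h(\mathcal{P})$.

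\emph{Step 3: the reverse inequality, which I expect to be the main obstacle.} We need $h(g)=h(\mathcal{S})$, that is, that $(S,Q,g)$ is a monotone model of $\mathcal{S}$. Two points need checking.

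\begin{itemize}
\item Discrete components of $(S,Q)$ are images of chains of discrete components of $(T,P)$ joined through hulls.
\item One must verify that $g$ is monotone on $[x_i,x_j]$ for consecutive $x_i,x_j$. This path is the collapse of a path in $T$ from some $a\in P_i$ to some $b\in P_j$ meeting no other hull.
\end{itemize}

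I would try to show monotonicity of $f$ on $[a,b]$ directly, using $P$-monotonicity along the intermediate points of $P$ and Proposition~\ref{4.2 aglmm} at the vertices. Any folding would have to occur inside a hull, and such folding disappears after collapsing.

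If this direct argument fails, the fallback is to run Step 1 in reverse. Take the canonical model of $\mathcal{S}$ and blow up each $x_i$ into a copy of the canonical trivial model of $P_i$, which has zero entropy. This produces a model of $\mathcal{P}$ whose matrix has the same triangular shape, so its entropy is $\max\{h(\mathcal{S}),0\}=h(\mathcal{S})$. That gives $h(\mathcal{P})\le h(\mathcal{S})$ and closes the argument.
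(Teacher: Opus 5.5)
Your Steps~1 and~2 are sound. The paper gives no argument of its own here; it quotes Proposition~8.1 of \cite{aglmm}, and your block-triangular Markov matrix is the natural route. There is one small imprecision: the diagonal blocks of $A^p$ count coverings with multiplicity rather than being $0$--$1$ Markov matrices of $f^p$. This does not affect $\rho(A)\le 1$, so you do get $h(g)=\log\max\{\rho(B),1\}=h(\mathcal{P})$, hence $h(\mathcal{S})\le h(\mathcal{P})$.

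\emph{The gap is Step~3.} The reverse inequality is only a plan, and the plan aims at a difficulty that is not there: there are no intermediate points of $P$, no vertices needing Proposition~\ref{4.2 aglmm}, and no folding inside hulls. The missing observation is that every arc of $S$ between consecutive points $x_i,x_j$ lifts to a basic path of $(T,P)$.

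\begin{itemize}
\item $\kappa^{-1}((x_i,x_j))$ is an open arc missing all hulls. Its closure is $[a,b]$ with $a\in\langle P_i\rangle_T$ and $b\in\langle P_j\rangle_T$.
\item Let $a'=a$ if $a\in P_i$, and let $a'\in P_i$ be arbitrary otherwise; define $b'$ likewise.
\item Points of $P_i$ are pairwise consecutive, so they are endpoints of $\langle P_i\rangle_T$ and cannot lie in the interior of an arc inside the hull. Together with the block condition $\langle P_i\rangle_T\cap P=P_i$ and $(a,b)\cap P=\emptyset$, this gives $(a',b')\cap P=\emptyset$.
\item So $\{a',b'\}$ is a basic path, and $f$ is monotone on $[a',b']\supset[a,b]$.
\item $\kappa|_{[a,b]}$ is a homeomorphism onto $[x_i,x_j]$, and $\kappa$ is monotone along every arc. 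Hence $g=\kappa\circ f\circ(\kappa|_{[a,b]})^{-1}$ is monotone on $[x_i,x_j]$.
\end{itemize}

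Any two points of a discrete component are consecutive, so your first bullet is not needed. Thus $(S,Q,g)$ is a $Q$-monotone model of $\mathcal{S}$ with $\En(S)\subset Q$. Theorem~A of \cite{aglmm}, in the form recalled in Section~\ref{S1} (every monotone model attains the entropy of its pattern), then gives $h(g)=h(\mathcal{S})$, which closes the proof. Note that Theorem~\ref{A-aglmm}(b) as stated only covers canonical models, which also affects your citation in Step~1.

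\emph{Your fallback does not close the gap as written either,} because $\mathcal{P}$ is not determined by $\mathcal{S}$ together with its trivial blocks. For instance, take the $4$-periodic patterns with discrete components $\{2,0\},\{0,1\},\{1,3\}$ and $\{0,1,2\},\{1,3\}$. They have the same trivial blocks $\{0,2\},\{1,3\}$ and the same trivial $2$-periodic skeleton. They differ only in whether the arc towards $x_1$ leaves $\langle\{0,2\}\rangle$ at the point $0$ or at an interior point.

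So blowing up each $x_i$ into a trivial model produces a model of $\mathcal{P}$ only after two further choices. First, for each discrete component of $\mathcal{S}$ containing $x_i$, you must prescribe where the corresponding direction attaches in the inserted hull. This is read off from $T$: either a specific point of $P_i$, or an interior point (all interior attachments are combinatorially equivalent). Second, you must assign images to the non-$Q$ vertices of the canonical tree of $\mathcal{S}$ that are mapped to some $x_k$. This is repairable, but it is exactly the bookkeeping that the direct route avoids.
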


Now we are ready to state the known recursive characterization of zero entropy patterns in terms of skeletons. The following result
is Proposition~5.6 of \cite{reducibility}.

\begin{proposition}\label{56}
Let $\mathcal{P}$ be a nontrivial periodic pattern. Then, $h(\mathcal{P})=0$ if and only if $\mathcal{P}$ has a separated structure
of trivial blocks such that the associated skeleton has entropy $0$.
\end{proposition}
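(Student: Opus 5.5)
The \emph{if} direction follows directly from Proposition~\ref{81}. Suppose $\mathcal{P}$ has a separated structure of trivial blocks $P_0\cup\cdots\cup P_{p-1}$ whose skeleton $\mathcal{S}$ satisfies $h(\mathcal{S})=0$. Then Proposition~\ref{81} gives $h(\mathcal{P})=h(\mathcal{S})=0$, and nothing more is needed.

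The content is in the \emph{only if} direction. Fix the canonical model $(T,P,f)$ of $\mathcal{P}$, so that $h(f)=0$ by Theorem~\ref{A-aglmm}, and so that $(T,P\cup V(T),f)$ is a Markov model whose Markov matrix has spectral radius at most $1$ by Proposition~\ref{markov}. I plan to proceed in three steps.

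\textbf{Step 1: build the blocks.} Since $\mathcal{P}$ is nontrivial, it has at least two discrete components. I will use a standard zero-entropy fact: with zero entropy, no basic path can $f^m$-cover two distinct basic paths in a way that produces a "horseshoe", meaning two disjoint loops in the path graph through the same vertex. I would show that the Markov graph of $(T,P\cup V(T),f)$ has only simple cycles. From this I would extract an $f$-periodic sub-tree structure, as follows.
\begin{enumerate}
\item[(i)] Consider the discrete components together with the vertices of $T$ joining them.
\item[(ii)] Look at how $f$ permutes the "connecting" vertices.
\item[(iii)] Take $p$ to be the period of the orbit of a suitable central vertex or central edge of $\chull{P}$. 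The natural candidate is a fixed point of $f$ in $T\setminus P$, which lies in the interior of some component.
\item[(iv)] Define $P_i$ as the set of points of $P$ lying in a common component of $T$ minus that orbit, in the style of a division or extension.
\end{enumerate}

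\textbf{Step 2: the blocks are trivial and separated.} The hard part is showing that these blocks are contained in single discrete components. I would argue by contradiction. If some $\chull{P_i}$ contained two discrete components, then $f^p\evalat{\chull{P_i}}$ would give a nontrivial sub-pattern whose basic paths, together with paths crossing between blocks, would yield a path-graph vertex with two distinct return loops. By~(\ref{patent}), this would give $\rho(M_\mathcal{P})>1$. Separation should come from the minimality properties of the canonical model: the images of the convex hulls $f(\chull{P_i})=\chull{P_{i+1}}$ are disjoint trees.

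\textbf{Step 3: the skeleton has entropy zero.} This follows again from Proposition~\ref{81}: $h(\mathcal{S})=h(\mathcal{P})=0$.

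I expect the main obstacle to be Step~2, and specifically producing the block structure in the first place. I would first try an induction on $n$ that reduces to a periodic vertex of the canonical model whose orbit separates $P$. Proving that such a vertex exists whenever $h=0$ is the crux of the argument.
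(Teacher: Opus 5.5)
Your \emph{if} direction is correct: it is exactly Proposition~\ref{81}. The paper does not prove Proposition~\ref{56} itself; it quotes it as Proposition~5.6 of \cite{reducibility}. The \emph{only if} direction has a genuine gap, and it is the one you flag yourself.

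Separation and Step~3 are actually cheap. As soon as $\mathcal{P}$ has \emph{some} structure of trivial blocks, its maximal structure is separated by Lemma~\ref{maxi}, and Proposition~\ref{81} then gives $h(\mathcal{S})=h(\mathcal{P})=0$. Your own separation argument is circular: saying that the trees $f(\chull{P_i})=\chull{P_{i+1}}$ are pairwise disjoint is precisely what ``separated'' means. So the entire content of the \emph{only if} direction is one existence claim: a nontrivial pattern with $h(\mathcal{P})=0$ admits at least one structure of trivial blocks. Your proposal does not prove this.

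Moreover, the mechanism you sketch would not deliver trivial blocks. The orbit of a fixed point, or of a single ``central'' periodic vertex, cuts $P$ at a coarse level of a nested hierarchy of block structures. Trivial blocks sit at the finest level. Take a zero-entropy interval pattern of period $8$:
\begin{itemize}
\item the unique fixed point of its canonical model splits $P$ into two blocks of $4$ points, each spanning three discrete components;
\item the only structure of trivial blocks is the $4$-block one, formed by pairs of consecutive points.
\end{itemize}
Hence the contradiction argument of Step~2 is false as stated. A block containing several discrete components does not force a vertex of the path graph to lie on two distinct cycles, since nontrivial blocks are perfectly compatible with zero entropy.

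A repaired plan needs two things. First, an actual proof that zero entropy forces some block structure to exist at all; this is the crux you leave open. Second, an induction that descends the hierarchy: apply the result to the zero-entropy patterns $[\chull{P_i},P_i,f^p]$, then check that the refinements assemble into a block structure of $P$. That check is not automatic, because different blocks of one structure can exhibit different patterns under $f^p$. In a zero-entropy $8$-periodic pattern, one block of the $2$-block structure can be a single discrete component while the other lies on an interval. The refinement of the second block then forces a splitting of the first.
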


We stress the fact that Proposition~\ref{56} relies on the notion of separability
of a block structure, which has to be tested over a particular topological representative of the pattern (the canonical model).
The aim of this paper is to give an alternative characterization that does not require the construction of any
particular topological realization --a purely combinatorial criterion.

Assume that a periodic pattern has structures of trivial blocks. The one with blocks with maximum cardinality will be called
the \emph{maximal structure}. For instance, $\{0,2,4,6\}\cup\{1,3,5,7\}$ is the maximal structure for the pattern shown in
Figure~\ref{ull2blocksV2}.

\begin{defi}\label{combicolla}
Let $\mathcal{P}=[T,P,f]$ be a nontrivial $n$-periodic pattern with structures of trivial blocks. Let $P_0\cup P_1\cup\ldots\cup P_{p-1}$ be the maximal
structure. A $p$-periodic pattern $\mathcal{C}=[S,Q,g]$ will be called the \emph{combinatorial collapse of $\mathcal{P}$} if the following properties
are satisfied:
\begin{enumerate}
\item[(a)] $g(i)=j$ if and only if $f(P_i)=P_j$
\item[(b)] For any $0\le i<j\le p-1$, there is a discrete component of $\mathcal{C}$ containing the points $i,j$ if and only if
there is a discrete component of $\mathcal{P}$ intersecting the blocks $P_i,P_j$.
\end{enumerate}
Note that the combinatorial collapse of a pattern with structures of trivial blocks is unique, since it is always carried
out over the maximal structure, which is unique by definition. As an example, see Figure~\ref{omg}.
\end{defi}

\begin{figure}
\centering
\includegraphics[scale=0.5]{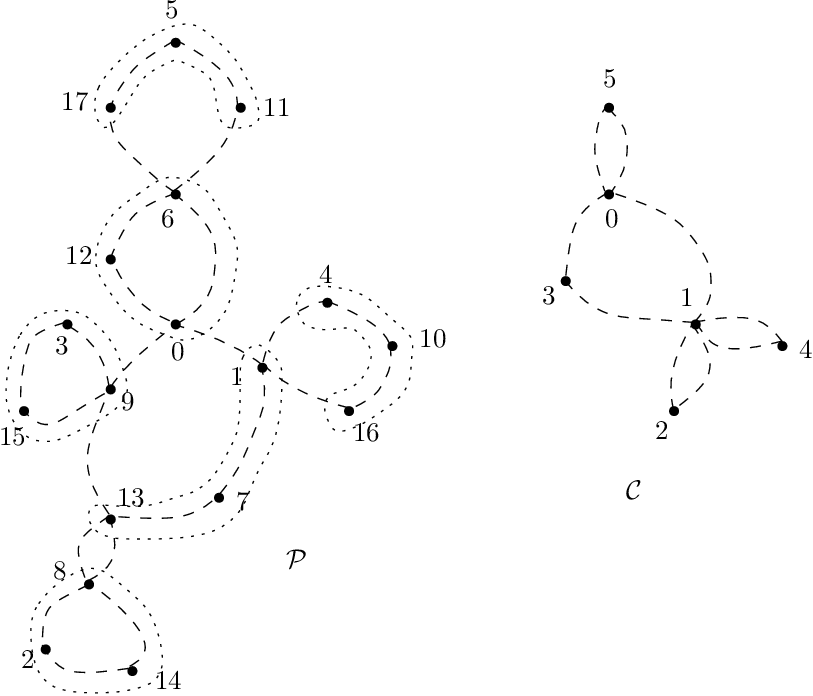}
\caption[fck]{A 18-periodic pattern $\mathcal{P}$ with a maximal structure of 6 blocks, and the combinatorial collapse $\mathcal{C}$.\label{omg}}
\end{figure}

Observe the strong analogy between the concepts of combinatorial collapse and skeleton. However, in general both patterns differ
from each other. See Figure~\ref{topcol2}.

\begin{figure}
\centering
\includegraphics[scale=0.55]{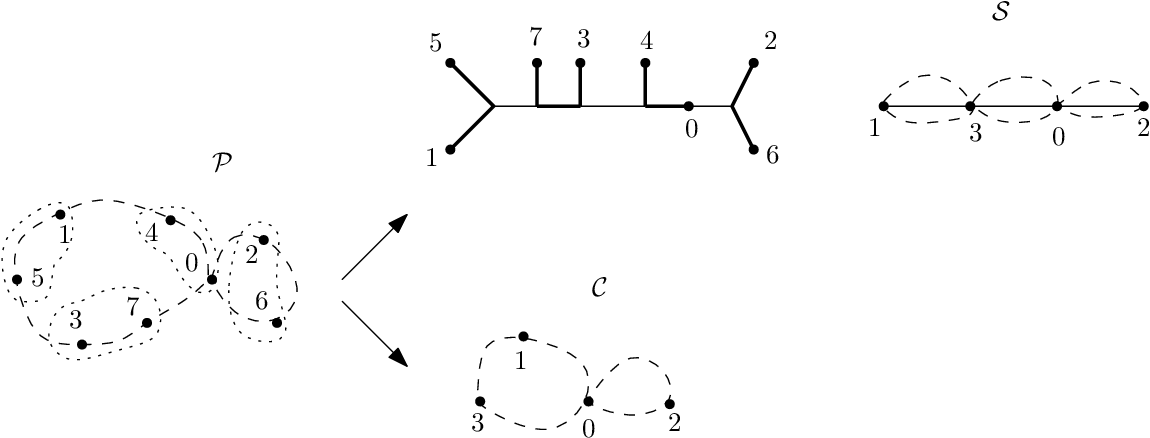}
\caption[fck]{Left: an 8-periodic pattern $\mathcal{P}$ with a maximal and separated structure of 4 blocks. Up, center:
the canonical model $(T,P,f)$ of $\mathcal{P}$, the convex hulls of the blocks marked with thick lines. Up, right:
the corresponding skeleton $\mathcal{S}$. Bottom: the combinatorial collapse $\mathcal{C}$. \label{topcol2}}
\end{figure}

The following proposition, a kind of combinatorial analog of Proposition~\ref{81}, is the core result of this paper.

\begin{MainProposition}\label{SC}
Let $\mathcal{P}$ be a nontrivial periodic pattern having structures of trivial blocks and let $\mathcal{C}$ be the
combinatorial collapse of $\mathcal{P}$. Then, $h(\mathcal{C})=h(\mathcal{P})$.
\end{MainProposition}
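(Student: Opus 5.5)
Since both patterns are periodic, I will work with the path transition matrices and use \eqref{patent}. It gives $h(\mathcal{P})=\log\max\{\rho(M_{\mathcal{P}}),1\}$ and $h(\mathcal{C})=\log\max\{\rho(M_{\mathcal{C}}),1\}$, so it suffices to compare the growth rates of the numbers of paths of length $m$ in the $\mathcal{P}$-path graph and in the $\mathcal{C}$-path graph.

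Let $P_0\cup\dots\cup P_{p-1}$ be the maximal structure of trivial blocks and $\beta\colon P\to Q$ the block map, $\beta(x)=i$ for $x\in P_i$. By (a), $\beta\circ f=g\circ\beta$. I classify the basic paths $\{a,b\}$ of $\mathcal{P}$ in two types:
\begin{itemize}
\item \emph{internal}, with $a,b$ in the same block;
\item \emph{external}, with $a\in P_i$, $b\in P_j$, $i\ne j$.
\end{itemize}
By (b), each external basic path projects to the basic path $\{i,j\}$ of $\mathcal{C}$, and every basic path of $\mathcal{C}$ arises this way.

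\textbf{Step 1: internal paths contribute no entropy.} Each block is trivial, so the convex hull of a block is contained in a single discrete component. Since $f(P_i)=P_{i+1}$, the image of an internal path is again a pair inside one block, hence an internal basic path. Moreover, the condition $\chull{P_i}\cap P_j=\emptyset$ should force $\chull{f(\pi)}$ to contain no other points of $P$. So an internal path $f$-covers exactly one basic path, and it is internal. The internal paths therefore form an invariant subgraph in which every vertex has out-degree one, so it has spectral radius at most $1$. With respect to the ordering (external, internal), $M_{\mathcal{P}}$ has a zero block from internal to external, i.e.\ it is block triangular. Hence $\rho(M_{\mathcal{P}})=\max\{\rho(E),1\}$, where $E$ is the restriction of $M_{\mathcal{P}}$ to external paths.

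\textbf{Step 2: compare $E$ with $M_{\mathcal{C}}$.} I want a combinatorial semiconjugacy: $\pi=\{a,b\}$ external $f$-covers an external $\pi'=\{c,d\}$ iff $\beta(\pi)\to\beta(\pi')$ in $\mathcal{C}$, up to a bounded multiplicity. Concretely:
\begin{itemize}
\item \emph{Inequality $\rho(M_{\mathcal{C}})\le\max\{\rho(E),1\}$.} Show that if $\{i,j\}\to\{k,l\}$ in $\mathcal{C}$, then for each external lift of $\{i,j\}$ at least one external lift of $\{k,l\}$ is covered. This gives the lifting of path counts.
\item \emph{Inequality $\rho(E)\le\max\{\rho(M_{\mathcal{C}}),1\}$.} Show that $\pi\to\pi'$ implies $\beta(\pi)\to\beta(\pi')$, and that the number of lifts of each $\mathcal{C}$-basic path is bounded by $n^2$. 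Then the number of $E$-paths of length $m$ is at most $n^2$ times the number of $M_{\mathcal{C}}$-paths of length $m$.
\end{itemize}
The key geometric tool is that the chain of basic paths joining two points of $P$ projects, after deleting repeated block labels, to a chain of basic paths of $\mathcal{C}$ joining the projected points. Here I use that discrete components of $\mathcal{P}$ meeting $P_i,P_j$ correspond to discrete components of $\mathcal{C}$ through $i,j$. Thus $\chull{f(\pi)}$ is a union of basic paths whose projection gives $\chull{g(\beta\pi)}$ in $\mathcal{C}$.

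\textbf{Main obstacle.} The projection of the tree structure (the discrete components and their adjacency graph) must be faithful. Discrete components of $\mathcal{P}$ through several blocks might merge in $\mathcal{C}$ or create cycles in the incidence graph, which would break the correspondence of convex hulls. My first attempt is to use maximality of the structure, together with triviality of the blocks, to show that the bipartite graph (discrete components vs.\ blocks) collapses to the component tree of $\mathcal{C}$. If this fails, I would fall back on Proposition~\ref{56} and Proposition~\ref{81}, arguing by induction on $n$. In the zero-entropy case I would show that the separated structure can be taken maximal and that $\mathcal{C}$ and $\mathcal{S}$ have the same path matrix up to internal trivial factors.
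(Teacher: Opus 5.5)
Your route differs from the paper's, which never works with $M_{\mathcal{P}}$ directly. The paper gets $h(\mathcal{C})\le h(\mathcal{S})=h(\mathcal{P})$ because $\mathcal{C}$ is an opening of the skeleton of the maximal structure, which is separated by Lemma~\ref{maxi}. It gets $h(\mathcal{P})\le h(\mathcal{C})$ from Lemma~\ref{basic}: splitting every block off as a full discrete component yields a $\mathcal{P}'$ whose skeleton is $\mathcal{C}$ and of which $\mathcal{P}$ is an opening. Theorem~\ref{5.3} and Proposition~\ref{81} are used in both directions.

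Your Step~1 is correct, though the real reason is simply that $f(\pi)$ lies in a trivial block, so its two points are consecutive. Step~2, however, has a genuine gap, in two places.

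\textbf{The counting argument.} The upper bound does not follow from what you list. ``Covering projects to covering'' plus ``at most $n^2$ lifts per $\mathcal{C}$-basic path'' only bounds the number of lifts of a $\mathcal{C}$-path of length $m$ by $n^{2(m+1)}$. That gives $\rho(E)\le n^2\rho(M_{\mathcal{C}})$, which cannot even detect entropy zero. ``Up to a bounded multiplicity'' is not enough: the multiplicity must be exactly one. You need that, for every external $\pi$, $\beta$ maps the external paths covered by $\pi$ \emph{bijectively} onto the $\mathcal{C}$-basic paths covered by $\beta(\pi)$. Then $E\Pi=\Pi M_{\mathcal{C}}$ for the $0$--$1$ projection matrix $\Pi$, the row sums of $E^m$ and $M_{\mathcal{C}}^m$ agree, and $\rho(E)=\rho(M_{\mathcal{C}})$, giving both inequalities at once.

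\textbf{The unresolved obstacle.} This bijection is exactly your ``main obstacle'', and the fallback does not replace it. It only treats entropy zero, while the proposition concerns arbitrary entropy. Also, $\mathcal{C}$ and $\mathcal{S}$ do not share path matrices in general: $\mathcal{C}$ is typically a proper opening of $\mathcal{S}$ (Figure~\ref{topcol2}).

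\textbf{The missing idea.} Let $\Gamma$ be the bipartite graph joining each block $P_i$ to the discrete components of $\mathcal{P}$ that it meets.
\begin{itemize}
\item $\Gamma$ arises from the incidence tree of $\mathcal{P}$ (points versus discrete components) by identifying the points of each $P_i$. Since $P_i$ is trivial, these points are all neighbours of the unique component $\widehat{D}_i\supset P_i$. Identifying neighbours of a single vertex, and merging the resulting parallel edges, keeps a tree.
\item Hence $\Gamma$ is a tree, and no two components meet the same two blocks.
\item Consequently the discrete components of $\mathcal{C}$ are exactly the sets $\beta(D)$ with $|\beta(D)|\ge2$. The incidence tree of $\mathcal{C}$ is $\Gamma$ minus its leaf components. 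No merging or cycles occur, and maximality is not needed.
\end{itemize}
Now take an external $\pi=\{a,b\}$ and write the geodesic from $f(a)$ to $f(b)$ as $z_0,D_1,z_1,\ldots,D_m,z_m$.
\begin{itemize}
\item The basic paths covered by $\pi$ are exactly the pairs $\{z_{s-1},z_s\}$.
\item An equality $\beta(z_{s-1})=\beta(z_s)$ forces $D_s=\widehat{D}_{\beta(z_s)}$; these give the internal covered paths.
\item Deleting these detours leaves a non-backtracking walk in $\Gamma$, since the $D_s$ are distinct and two detours are never adjacent.
\item That walk is therefore the geodesic from $g(\beta(a))$ to $g(\beta(b))$, along which each pair of consecutive blocks occurs once.
\end{itemize}
This is the required bijection.

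Completed this way, your argument is more self-contained than the paper's. It needs no canonical model, separatedness, skeleton or Theorem~\ref{5.3}, and it works for any structure of trivial blocks. The price is that it rests entirely on formula~(\ref{patent}).
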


Proposition~\ref{SC} will be proved in Section~\ref{S5}. Using Propositions~\ref{81} and \ref{56}, it has the following immediate
consequence.

\begin{MainProposition}\label{mainA}
Let $\mathcal{P}$ be a nontrivial periodic pattern. Then, $h(\mathcal{P})=0$ if and only if $\mathcal{P}$ has structures of trivial
blocks and the combinatorial collapse of $\mathcal{P}$ has entropy $0$.
\end{MainProposition}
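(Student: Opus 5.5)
The plan is to derive Proposition~\ref{mainA} from Proposition~\ref{SC} together with Proposition~\ref{56}, using Proposition~\ref{81} only through Proposition~\ref{56}. The two directions are handled separately.

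For the implication ``$\Leftarrow$'', assume that $\mathcal{P}$ has structures of trivial blocks and that its combinatorial collapse $\mathcal{C}$ has entropy $0$. Proposition~\ref{SC} gives $h(\mathcal{P})=h(\mathcal{C})=0$ directly. No separability has to be checked here, which is exactly the advantage of the combinatorial collapse.

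For the implication ``$\Rightarrow$'', assume $h(\mathcal{P})=0$ with $\mathcal{P}$ nontrivial. By Proposition~\ref{56}, $\mathcal{P}$ has a separated structure of trivial blocks, so in particular it has structures of trivial blocks. Hence the maximal structure exists and the combinatorial collapse $\mathcal{C}$ of Definition~\ref{combicolla} is well defined. Proposition~\ref{SC} then yields $h(\mathcal{C})=h(\mathcal{P})=0$.

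The only point that needs care is the mismatch between structures. Proposition~\ref{56} provides \emph{some} separated structure of trivial blocks, whereas $\mathcal{C}$ is built on the \emph{maximal} one. This causes no difficulty, because Proposition~\ref{SC} requires only the existence of structures of trivial blocks and makes no separability assumption. We never need the maximal structure to be separated, nor any comparison between the skeleton and $\mathcal{C}$. All the real work is therefore contained in Proposition~\ref{SC}, and the argument above is a formal combination of the cited results.
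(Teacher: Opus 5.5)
Your proposal is correct and matches the paper's argument, which presents Proposition~\ref{mainA} as an immediate consequence of Proposition~\ref{SC}. The backward direction is Proposition~\ref{SC} directly; in the forward direction, Proposition~\ref{56} supplies a structure of trivial blocks so that Proposition~\ref{SC} applies, and you are right that separability of the maximal structure need not be checked here, since Lemma~\ref{maxi} and Proposition~\ref{81} are already absorbed into the proof of Proposition~\ref{SC}.
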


Observe the recursive nature of Proposition~\ref{mainA}: if $\mathcal{P}$ is not a trivial pattern, then its combinatorial collapse, with entropy zero
and a period that strictly divides that of $\mathcal{P}$, is either trivial or has also structures of trivial blocks with a combinatorial collapse
of entropy zero. We can thus iterate the process as many times as necessary to finally obtain a trivial pattern, suggesting the following
definition.

\begin{defi}\label{explosions}
We say that an $n$-periodic pattern $\mathcal{P}$ is \emph{strongly collapsible} if there exist a sequence
of patterns $\{\mathcal{P}_i\}_{i=0}^r$ and a sequence of integers $\{p_i\}_{i=0}^r$ for some $r\ge0$ such that:
\begin{enumerate}
\item $\mathcal{P}_r=\mathcal{P}$
\item $\mathcal{P}_0$ is a trivial $p_0$-periodic pattern
\item For $1\le i\le r$, $\mathcal{P}_{i}$ has a maximal structure of $\prod_{j=0}^{i-1} p_{j}$ trivial blocks of cardinality $p_i$
and $\mathcal{P}_{i-1}$ is the combinatorial collapse of $\mathcal{P}_i$.
\end{enumerate}

The sequence $\{\mathcal{P}_i\}_{i=0}^r$ will be called \emph{the sequence of collapses of} $\mathcal{P}$. Notice that $\prod_{j=0}^{r}p_j=n$.
See Figure~\ref{ullh0} for an example with $p_0=3$, $p_1=2$, $p_2=3$.
\end{defi}

\begin{remark}\label{captrivial}
It is implicit in Definition~\ref{explosions} that we stop the collapsing process as far as we get a trivial pattern. In other words, in
a sequence of collapses $\{\mathcal{P}_i\}_{i=0}^r$, all patterns $\mathcal{P}_i$ with $1\le i\le r$ have at least two discrete components.\qed
\end{remark}

As an immediate consequence of Proposition~\ref{mainA} we get the following corollary, which is in fact the first main result of this paper.

\begin{MainTheorem}\label{mainB}
Let $\mathcal{P}$ be a periodic pattern. Then, $h(\mathcal{P})=0$ if and only if $\mathcal{P}$ is strongly collapsible.
\end{MainTheorem}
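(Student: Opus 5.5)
The plan is to argue by strong induction on the period $n$ of $\mathcal{P}$, using Proposition~\ref{mainA} as the inductive step and Definition~\ref{explosions} to translate between the two sides. For the base of the induction, and more generally whenever $\mathcal{P}$ is trivial, both sides hold. On one side, a trivial pattern has entropy zero, as observed before Remark~\ref{standing2}. On the other side, it is strongly collapsible with $r=0$, $\mathcal{P}_0=\mathcal{P}$ and $p_0=n$. So from now on I assume $\mathcal{P}$ is nontrivial, which forces $n\ge3$.

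For the direction ($\Rightarrow$), assume $h(\mathcal{P})=0$. By Proposition~\ref{mainA}, $\mathcal{P}$ has structures of trivial blocks, and its combinatorial collapse $\mathcal{C}$ has entropy $0$. Let the maximal structure be $P_0\cup\dots\cup P_{p-1}$. Then $\mathcal{C}$ is $p$-periodic, and $p$ strictly divides $n$, so by induction $\mathcal{C}$ is strongly collapsible. Take its sequence of collapses $\{\mathcal{C}_i\}_{i=0}^{s}$ with integers $\{q_i\}_{i=0}^{s}$, where $\mathcal{C}_s=\mathcal{C}$ and $\prod_{j=0}^{s}q_j=p$. Now extend the sequence by setting $r=s+1$, $\mathcal{P}_i=\mathcal{C}_i$ and $p_i=q_i$ for $i\le s$, and $\mathcal{P}_r=\mathcal{P}$ with $p_r=n/p$. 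Condition (c) of Definition~\ref{explosions} for $i=r$ is exactly the statement that $\mathcal{P}$ has a maximal structure of $p=\prod_{j<r}p_j$ trivial blocks, each of cardinality $n/p$, and that $\mathcal{P}_{r-1}=\mathcal{C}$ is its combinatorial collapse. For $i\le s$ the condition is inherited from the sequence for $\mathcal{C}$. Remark~\ref{captrivial} is also respected, since $\mathcal{P}$ is nontrivial.

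For the direction ($\Leftarrow$), assume $\mathcal{P}$ is strongly collapsible with sequence $\{\mathcal{P}_i\}_{i=0}^r$. Since $\mathcal{P}$ is nontrivial, $r\ge1$. By condition (c), $\mathcal{P}=\mathcal{P}_r$ has structures of trivial blocks, and its combinatorial collapse is $\mathcal{P}_{r-1}$. The truncated sequence $\{\mathcal{P}_i\}_{i=0}^{r-1}$ with $\{p_i\}_{i=0}^{r-1}$ witnesses that $\mathcal{P}_{r-1}$ is strongly collapsible, and its period $\prod_{j<r}p_j$ is smaller than $n$. By induction $h(\mathcal{P}_{r-1})=0$, so Proposition~\ref{mainA} gives $h(\mathcal{P})=0$.

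I expect no real obstacle. The only care needed is the bookkeeping in (c): the number of blocks at stage $i$ must equal the product of the previous $p_j$, and that product is precisely the period of the collapse. The uniqueness of the combinatorial collapse, which holds because the maximal structure is unique, ensures that the truncated and extended sequences are the genuine sequences of collapses. All the substantive content lies in Proposition~\ref{mainA}, and hence in Proposition~\ref{SC}.
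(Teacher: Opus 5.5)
Your proof is correct and is essentially the paper's argument. The paper obtains Theorem~\ref{mainB} as an immediate consequence of Proposition~\ref{mainA} by iterating the collapse, and your strong induction on the period (using that the number $p$ of blocks strictly divides $n$), with the bookkeeping for condition (c) of Definition~\ref{explosions} and Remark~\ref{captrivial}, is a careful formalization of that recursion.
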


\section{An application to star maps. Second main result}\label{S3}
As an example of application of Theorem~\ref{mainB}, we look now at a classic problem in Combinatorial Dynamics. Let us fix a tree $T$.
We want to know for which values of $n$ it is possible to have a zero entropy self-map of $T$ having an $n$-periodic orbit. A more
ambitious problem is to determine, for any prescribed period $n$, an optimal lower bound for the topological entropy of any self-map of $T$ having
an $n$-periodic orbit. More precisely, for any integer $n\ge 1$, define $M_n(T)$ (standing for \emph{minimum entropy for period $n$}) as
\[ M_n(T):=\inf\{h(f): (T,P,f)\mbox{ is an $n$-periodic model}\}. \]

We want to know for which values of $n$ we have $M_n(T)=0$. The ambitious version of the problem is to find a closed formula for $M_n(T)$ in
terms of $n$. Of course, the answer to both questions depends strongly on the particular tree $T$. These problems are in general very difficult and
have been solved only when $T$ is an interval \cite{bgmy} or a 3-star \cite{almy,YeXY}. For some more general families of trees, partial answers to
these and other related questions are given in \cite{j1,j2}.

\begin{figure}
\centering
\includegraphics[scale=0.75]{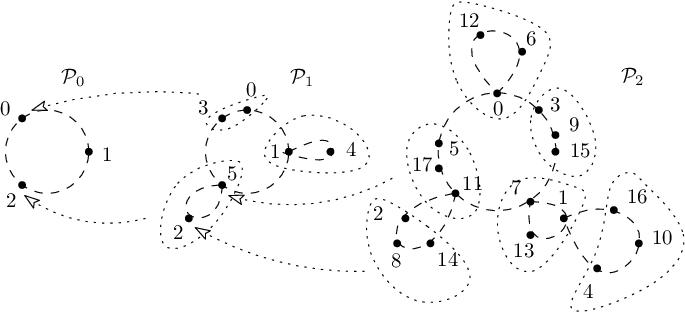}
\caption[fck]{An example of a zero entropy 18-periodic pattern $\mathcal{P}_2$ and the corresponding sequence of collapses.\label{ullh0}}
\end{figure}

The second main result of this paper describes for which periods $n$ we have $M_n(T)=0$ when $T$ is a star. For $n\ge k\ge 2$, set
\[ M_{n,k}:=\inf\{h(f): (T,P,f)\mbox{ is an $n$-periodic model such that $T$ is a $k$-star}\}. \]

In the previous definition we are taking \emph{models}. Recall (Remark~\ref{pruned}) that, by definition, if $(T,P,f)$ is a model then $\En(T)\subset P$.
So, $M_{n,k}$ takes into account only periodic orbits \emph{properly} contained in $T$, i.e. having at least one point in each branch of the star.

The following is the second main result of this paper, and its proof uses the combinatorial characterization of zero entropy tree patterns given by
Theorem~\ref{mainB}.

\begin{MainTheorem}\label{mainC}
Let $n,k$ be integers such that $n\ge k\ge 3$. Then, $M_{n,k}=0$ if and only if:
\begin{enumerate}
\item Either $n=k\cdot 2^q$ with $q\ge0$
\item or $n=2^q$ with $q\ge k-1$.
\end{enumerate}
\end{MainTheorem}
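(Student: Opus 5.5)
The plan is to reduce the statement to a question about patterns and then use Theorem~\ref{mainB} in both directions. Fix $n\ge k\ge 3$. There are only finitely many $n$-periodic patterns that can be exhibited by a model $(T,P,f)$ with $T$ a $k$-star and $\En(T)\subset P$. For each model we have $h(f)\ge h(\mathcal{P})$, and by (\ref{patent}) each $h(\mathcal{P})$ equals $\log\max\{\rho(M_\mathcal{P}),1\}$, so it is either $0$ or bounded away from $0$. Hence $M_{n,k}=0$ if and only if either some such pattern has entropy $0$, or some map of a $k$-star with entropy $0$ exhibits it. The first condition is necessary; for sufficiency I will directly build zero entropy star maps. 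So the theorem splits into an explicit construction (the ``if'' part) and a combinatorial obstruction (the ``only if'' part), the latter using strong collapsibility.

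\emph{Construction.} Suppose $n=k\cdot 2^q$. Let $c$ be the central point and $B_0,\dots,B_{k-1}$ the branches. Choose homeomorphisms $f\colon B_i\to B_{i+1}$ for $i<k-1$, each fixing $c$. Then choose $f\colon B_{k-1}\to B_0$ so that $f^k\evalat{B_0}$ is a zero entropy interval map fixing $c$ and having a $2^q$-periodic orbit in $B_0\setminus\{c\}$; such interval maps exist by Sharkovskii's theory. The orbit of that point has period $k2^q$ and meets every branch, and $h(f)=\frac1k h(f^k)=0$. Now suppose $n=2^q$ with $q\ge k-1$. I will argue by induction on $k$, using period doubling that adds one branch at a time. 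Start from an interval ($2$-star) map with a $2^{q-k+2}$-periodic orbit and zero entropy. At each step, take a zero entropy model on a $j$-star with period $2^m$ whose orbit contains a point $x$ adjacent to the centre. Double the orbit: each point $y$ is replaced by a pair $\{y,y'\}$ forming a trivial block, and the companion of $x$ is placed on a new branch at $c$. Then check that the result is a $(j+1)$-star model, that its combinatorial collapse is the previous pattern, and therefore (Theorem~\ref{mainB}) that it has entropy zero. I expect a concrete zero entropy map, not just a zero entropy pattern, to be obtainable by the standard ``doubling'' of maps, as in the interval period-doubling construction.

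\emph{Obstruction.} Let $\mathcal{P}$ be a zero entropy $n$-periodic pattern realized properly on a $k$-star. By Theorem~\ref{mainB} it has a maximal structure of $p$ trivial blocks, and its combinatorial collapse $\mathcal{C}$ has period $p$ and zero entropy. Because the blocks are trivial, each block lies in a single discrete component. On a star, the discrete components are either the central one (the points nearest $c$ on each branch, plus $c$ if $c\in P$) or pairs of consecutive points on a branch. I will prove a key lemma with two alternatives.
\begin{enumerate}
\item Every block lies inside a single branch. Then $f$ permutes the branches cyclically, so $k\mid p$. Also, $\mathcal{C}$ is again realized on a $k$-star (or is trivial, when $p=k$), and the block cardinality is a power of $2$ because within a branch we have interval zero entropy dynamics. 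This yields $n=k2^q$.
\item Some block meets the central component in points of different branches. Then, by periodicity and triviality, $p=n/2$ and blocks are pairs. Moreover, $\mathcal{C}$ is realized on a star with at least $k-1$ branches, with $c$ (or the central component) losing at most one branch.
\end{enumerate}
Iterating along the sequence of collapses, the number of branches can decrease by at most one per halving of the period. Once the number of branches drops to $2$, we are in the interval case, where only powers of $2$ are possible by \cite{bgmy}. Also, alternative (a) cannot occur after alternative (b) unless $n$ already has the form $k'2^q$ consistent with the claim. This forces either $n=k2^q$ or $n=2^q$ with $q\ge k-1$.

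The main obstacle is this key lemma: a careful case analysis, depending on whether $c\in P$ or not, of how a maximal trivial block structure can sit on a star. In particular I must show that mixed behaviour (some blocks on branches, some crossing the centre) is impossible, and that each collapse reduces the branch count by at most one. I would first try to prove that all blocks have the same ``type'' by using $f(P_i)=P_{i+1}$ together with the fact that the central discrete component is the only one with more than two points, so that its images are controlled.
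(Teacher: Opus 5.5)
Your ``only if'' argument has a genuine gap, and it is aimed at a false statement. After your reduction you keep only the fact that some zero entropy $n$-periodic pattern embeds properly in a $k$-star. You then try to derive $n=k2^q$, or $n=2^q$ with $q\ge k-1$, purely from the collapse sequence. That implication fails.

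Take $n=6$, $k=4$ and the pattern with discrete components $\{0,1,2,3\}$, $\{1,4\}$, $\{2,5\}$. It is realized on a $4$-star with centre $y$ whose branches carry $0$; $3$; $1,4$; $2,5$. Its maximal structure $\{0,3\}\cup\{1,4\}\cup\{2,5\}$ consists of trivial blocks, and its combinatorial collapse is the trivial $3$-periodic pattern. So $h(\mathcal{P})=0$ by Proposition~\ref{SC}. You can also see this directly: the path graph consists of two $3$-cycles plus transient vertices, so $\rho(M_\mathcal{P})=1$. But $6$ is neither $4\cdot 2^q$ nor a power of $2$.

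Consistently with the theorem, every Markov map of this $4$-star exhibiting the pattern has positive entropy. For instance, for $f(y)=y$ the interval $[y,3]$ lies on the two distinct loops $[y,3]\to[y,1]\to[y,2]\to[y,3]$ and $[y,3]\to[1,4]\to[2,5]\to[y,3]$. The other six choices of $f(y)\in P$ behave similarly. The star admits no monotone model of this pattern: Proposition~\ref{4.2 aglmm} forces both $f(y)=y$ and $f(y)=1$. This is exactly your alternative (b) followed by a trivial collapse with $p_0=3$. Your iteration (``branches drop by at most one until the interval case'') does not rule this out, and at the level of patterns it cannot.

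The missing ingredient is the dynamics of the central point of the star. The paper first uses \cite{AMM} to turn $M_{n,k}=0$ into a zero entropy Markov model $(T,P\cup\{y\},f)$ on the $k$-star itself. It then collapses the non-expanding stages down to the first stage $\mathcal{P}_t$ of valence $k$, and splits according to $g(y)$:
\begin{itemize}
\item If $g(y)=g(j)$ for some $j$ in the branching component, collapsing $[y,j]$ gives a zero entropy simplicial pattern of valence $k-1$ or $k$. Lemma~\ref{basic3} then forces $n=2^q$ with $q\ge k-1$.
\item Otherwise, Lemma~\ref{endpoint} gives a block $\{0,2^t\}$ inside the branching component with $0$ an endpoint. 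Then $I=[i,2^t+i]$, $J_1=[0,y]$, $J_2=[y,2^t]$ form a horseshoe via Lemma~\ref{LMprevi}.
\end{itemize}
Some argument of this kind, using $f(y)$, is unavoidable.

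The same issue undermines your construction for $n=2^q$. Theorem~\ref{mainB} only gives a zero entropy \emph{pattern*}, and by the example above that need not yield a zero entropy map on the star. You must either keep the patterns simplicial, so that the canonical model lives on the star (as in Lemma~\ref{2n} for $q\ge k$), or write down the star map explicitly and check its entropy (as the paper does for $q=k-1$ by collapsing an edge).

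Your doubling step also fails to produce a block structure when $c\in P$. Putting the companion of $x$ on a new branch at $c$ places $c$ inside the hull of the block $\{x,x'\}$. In alternative (a), the claim $k\mid p$ is false. For example, the simplicial $8$-periodic $3$-star pattern obtained by an EE2 explosion has $p=4$.

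Your construction for $n=k2^q$ (cyclic homeomorphisms on the branches composed with a zero entropy interval map on one branch) is correct, and more elementary than the paper's.
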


For example, there exists a zero entropy map on a 4-star having an 8-periodic orbit (with points in all branches), while this is
not possible on a 5-star.

Some readers may find not natural to restrict our attention only to periodic orbits properly contained in the star. If this is the case, then drop this
condition and simply define

\[ \overline{M}_{n,k}:=\inf\{h(f): \map{f}{T}\mbox{ has an $n$-periodic orbit and $T$ is a $k$-star}\}. \]

It is easy to derive a result analog to Theorem~\ref{mainC} for $\overline{M}_{n,k}$ instead of $M_{n,k}$. The reason is that any continuous map
of a star can be trivially extended to a bigger star with the same entropy. More precisely, assume that $S\subset S'$, where $S$ is an $\ell$-star and $S'$
is a $k$-star with $\ell<k$. Let $y$ be the common central point of both stars. Consider a periodic orbit $P$ of $\map{f}{S}$ such that $\chull{P}_S=S$. The
\emph{trivial extension of $f$ to $S'$} is a map $\map{f'}{S'}$ such that $f'\evalat{S}=f\evalat{S}$ and $f'$ maps identically $S'\setminus S$ onto $f(y)$.
Then, $f'\evalat{P}=f\evalat{P}$ and $h(f')=h(f)$.

Recall that $M_{n,k}$ is well defined only when $n\ge k$, while $\overline{M}_{n,k}$ makes sense for any pair $n,k$. When $n<k$, a rigid rotation 
of an $n$-star is an example of a zero entropy map with a periodic orbit of period $n$. Considering its trivial extension to a $k$-star, we get
\[ \overline{M}_{n,k}=0\mbox{ when }1\le n<k. \]
On the other hand, the technique of trivial extensions tells us that
\[ \overline{M}_{n,k} = \min\{ M_{n,\ell}: 2\le\ell \le k\}\mbox{ when }n\ge k. \]

From the previous discussion, we get the following immediate consequence of Theorem~\ref{mainC}.

\begin{MainCorollary}
Let $n,k$ be positive integers such that $k\ge 3$. Then, $\overline{M}_{n,k}=0$ if and only if $n=\ell\cdot 2^q$ with $q\ge0$ and $1\le\ell\le k$.
\end{MainCorollary}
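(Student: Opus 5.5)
The plan is to reduce the Corollary to Theorem~\ref{mainC}, together with the classical interval case and the two identities obtained above from trivial extensions. Those identities are $\overline{M}_{n,k}=0$ when $1\le n<k$, and $\overline{M}_{n,k}=\min\{M_{n,\ell}:2\le\ell\le k\}$ when $n\ge k$. I would treat the two ranges of $n$ separately.

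\emph{Case $n<k$.} Here $\overline{M}_{n,k}=0$ always holds. The arithmetic condition also always holds, by taking $\ell=n$ and $q=0$, since $1\le n\le k$. So the equivalence is immediate.

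\emph{Case $n\ge k\ge 3$.} The minimum is taken over finitely many values of $\ell$. Hence $\overline{M}_{n,k}=0$ if and only if $M_{n,\ell}=0$ for some $\ell\in\{2,\dots,k\}$, and each $M_{n,\ell}$ is well defined because $n\ge k\ge\ell$. For $\ell\ge3$, Theorem~\ref{mainC} says that $M_{n,\ell}=0$ if and only if either $n=\ell\cdot2^q$ with $q\ge0$, or $n=2^q$ with $q\ge\ell-1$. For $\ell=2$ the star is an interval. There I would invoke the classical interval result (\cite{bgmy}, via Sharkovskii's ordering): a zero entropy interval map can only have periods that are powers of $2$. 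Conversely, for every $q$ there is an interval map of zero entropy with a $2^q$-periodic orbit, for instance the canonical model of the period-doubling pattern. So $M_{n,2}=0$ if and only if $n$ is a power of $2$.

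It then remains to match the conditions in this case.
\begin{itemize}
\item[($\Rightarrow$)] Suppose $M_{n,\ell}=0$ for some $2\le\ell\le k$. If $\ell=2$, or if $\ell\ge3$ and we are in alternative (b) of Theorem~\ref{mainC}, then $n=1\cdot2^q$. If $\ell\ge3$ and we are in alternative (a), then $n=\ell\cdot2^q$ with $\ell\le k$. Either way $n$ has the required form.
\item[($\Leftarrow$)] Suppose $n=\ell\cdot2^q$ with $1\le\ell\le k$. If $\ell\ge3$, Theorem~\ref{mainC}(a) applied to the $\ell$-star gives $M_{n,\ell}=0$; this is legitimate because $n\ge\ell$. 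If $\ell\in\{1,2\}$, then $n$ is a power of $2$ and the interval case gives $M_{n,2}=0$.
\end{itemize}

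The main obstacle is not conceptual. It lies only in the bookkeeping: $\ell=2$ falls outside the range of Theorem~\ref{mainC} and must be handled by the interval theory, and one must check that $M_{n,\ell}$ is defined, that is $n\ge\ell$, at each invocation.
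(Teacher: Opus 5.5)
Your proposal is correct and matches the paper's argument: the paper calls the corollary an immediate consequence of Theorem~\ref{mainC} together with the two trivial-extension identities, and you combine them exactly that way. Your explicit treatment of the interval case $\ell=2$ via \cite{bgmy} fills in what the paper leaves implicit. For the $(\Leftarrow)$ direction with $\ell\in\{1,2\}$ you could even avoid the interval theory: there $n\ge 4$ is a power of $2$, so Theorem~\ref{mainC}(b) applies directly to a $3$-star.
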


\section{Proof of Proposition~\ref{SC}}\label{S5}
A key ingredient to prove Proposition~\ref{SC} is a tool, introduced in \cite{ajm}, that allows us to compare the entropies of two patterns under certain conditions.
For the sake of brevity, here we will give a somewhat informal (though completely clear) version of this procedure.

Let $(T,P,f)$ be a model of a pattern $\mathcal{P}$. We recall that two discrete components of $(T,P)$ are either disjoint or intersect at a single point of $P$. Two discrete
components $A,B$ of $(T,P)$ will be said to be \emph{adjacent at $x\in P$} (or simply \emph{adjacent}) if $A\cap B=\{x\}$.
If we join together two adjacent discrete components $A$ and $B$ to get a new discrete component $A\cup B$ and keep intact the remaining components, we get a new pattern $\mathcal{O}$. We will say that $\mathcal{O}$ is a \emph{2-opening of $\mathcal{P}$}. As an example, see Figure~\ref{brutty2}, where $\mathcal{O}$ is a 2-opening of $\mathcal{P}$ obtained by joining the discrete components $A=\{2,5,6\}$ and $B=\{0,5\}$, while $\mathcal{R}$ is a 2-opening of $\mathcal{P}$ obtained by joining $B$ and $C=\{1,3,5\}$.

The previous procedure can be repeated in order to join together more discrete components. We will say that $\mathcal{Q}$ is an \emph{opening} of $\mathcal{P}$ if there exist $l\ge 2$ patterns $(\mathcal{P}_i)_{i=1}^l$ such that
$\mathcal{P}_1=\mathcal{P}$, $\mathcal{P}_l=\mathcal{Q}$, and $\mathcal{P}_{i+1}$ is a 2-opening of $\mathcal{P}_i$. In Figure~\ref{brutty2}, $\mathcal{O}'$ and $\mathcal{R}'$ are two different openings of $\mathcal{P}$.

\begin{figure}
\centering
\includegraphics[scale=0.65]{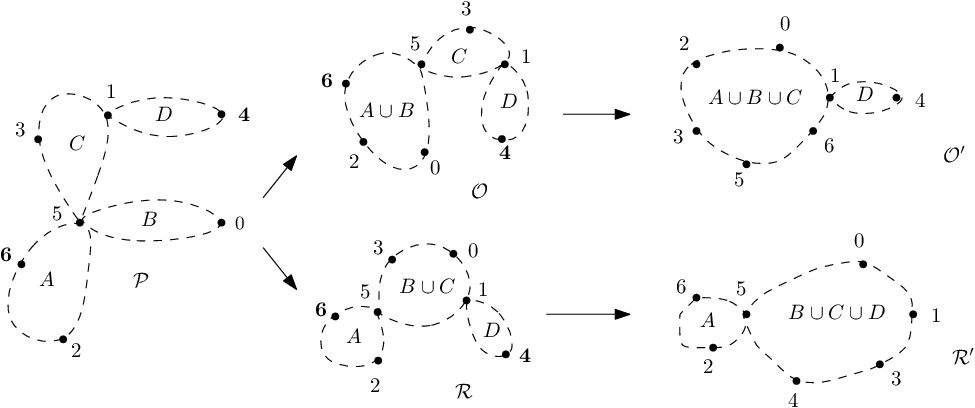}
\caption[fck]{$\mathcal{O}$ and $\mathcal{R}$ are two different 2-openings of $\mathcal{P}$, and $\mathcal{O}'$ and
$\mathcal{R}'$ are two different openings of $\mathcal{P}$. \label{brutty2}}
\end{figure}

As it is clear from the examples shown in Figure~\ref{brutty2}, we are implicitly assuming that the labeling of the points
of an $n$-periodic pattern $\mathcal{P}$ fixes the labeling of the points of any opening of $\mathcal{P}$.

As one may expect from intuition (we are dropping restrictions among points), the entropy of a model decreases when performing an opening,
as the following result (Theorem~5.3 of \cite{ajm}) states.

\begin{theorem}\label{5.3}
Let $\mathcal{P}$ and $\mathcal{O}$ be periodic patterns. If $\mathcal{O}$ is an opening of $\mathcal{P}$, then $h(\mathcal{P})\ge h(\mathcal{O})$.
\end{theorem}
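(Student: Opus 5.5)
The plan is to reduce to a single 2-opening and then realize the opened pattern by a map that is semiconjugate to a minimal-entropy model of $\mathcal{P}$. Since an opening is a finite chain of 2-openings, transitivity of $\ge$ means it suffices to prove $h(\mathcal{P})\ge h(\mathcal{O})$ when $\mathcal{O}$ is a 2-opening. So let $A,B$ be discrete components of $\mathcal{P}$, adjacent at $x\in P$, and let $\mathcal{O}$ be obtained by replacing them with $A\cup B$.

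\textbf{Setup.} By Theorem~\ref{A-aglmm}, take the canonical model $(T,P,f)$ of $\mathcal{P}$, so that $h(f)=h(\mathcal{P})$. By the definition (\ref{hP}), it is then enough to build a model $(T',P',g)$ of $\mathcal{O}$ with $h(g)\le h(f)$.

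\textbf{Construction of $T'$.} We "pull $x$ off the junction". Split the connected components of $T\setminus\{x\}$ into two groups:
\begin{itemize}
\item the directions at $x$ leading to $\langle A\rangle_T$ or to $\langle B\rangle_T$;
\item all the remaining directions.
\end{itemize}
Form $T'$ by inserting a new arc $[v,x]$. The first group of directions is reattached at the new point $v$, and the second group stays attached at $x$.

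\textbf{$T'$ realizes $\mathcal{O}$.} Let $P'$ be $P$ viewed in $T'$. In $T'$ the points of $(A\cup B)\setminus\{x\}$ are pairwise consecutive, because the path between any two of them now passes through $v$ and not through $x$. Every other pair of points keeps its consecutiveness status from $T$. So the discrete components of $(T',P')$ are exactly those of $\mathcal{O}$.

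\textbf{Semiconjugacy.} Let $\map{q}{T'}[T]$ collapse $[v,x]$ to $x$. It is a monotone surjection, it is the identity on $P$, and every fibre is either a point or the arc $[v,x]$. We will define $\map{g}{T'}$ so that $q\circ g=f\circ q$ and $g|_{P'}=f|_P$. Then $(T',P',g)$ is a model of $\mathcal{O}$.
\begin{itemize}
\item Off $q^{-1}(f^{-1}(x))$, $g$ is forced to be the unique lift of $f\circ q$, since $q$ is injective away from the fibre over $x$.
\item On $[v,x]$, we need $g$ to map into $q^{-1}(f(x))$. If the period of $P$ is at least $2$, then $f(x)\neq x$, so this fibre is a point and $g$ collapses $[v,x]$ to $f(x)$.
\item On the rest of $f^{-1}(x)$, we choose values in $[v,x]$.
\end{itemize}

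\textbf{Main obstacle: continuity of $g$.} The problem is near points $z\notin[v,x]$ with $f(z)=x$. Approaching $z$ from different sides, the lift of $f$ may land in $T'$ on the $v$-side or on the $x$-side of the inserted arc. I would first use $P$-monotonicity of the canonical model to see that $f^{-1}(x)$ consists of finitely many points and closed intervals. On a small neighbourhood of each such piece, I would define $g$ to run monotonically through $[v,x]$ between the two required sides. Such a piece can only sit inside a single basic interval, so this redefinition does not affect $g|_{P'}$.

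\textbf{Entropy estimate.} With $g$ continuous and $q\circ g=f\circ q$, Bowen's inequality gives
\[
h(g)\le h(f)+\sup_{y\in T}h\bigl(g,q^{-1}(y)\bigr).
\]
The fibres are points, whose contribution is zero, or the single arc $q^{-1}(x)=[v,x]$. That arc is mapped by $g$ to a point, so its contribution is also zero. Hence $h(\mathcal{O})\le h(g)\le h(f)=h(\mathcal{P})$.

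\textbf{Period one.} If $P$ has period $1$, the pattern is trivial and there is nothing to prove.
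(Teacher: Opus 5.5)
The paper does not prove this statement; it quotes it as Theorem~5.3 of \cite{ajm}. The only trace of that proof here is the remark in the proof of Lemma~\ref{2n} that inserting an edge which the map collapses to a point leaves the entropy unchanged. Your construction of $T'$ is in that spirit, and your check that $(T',P')$ realizes $\mathcal{O}$ is correct.

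\textbf{The gap.} You need a continuous $g$ with $g|_{P'}=f|_P$ \emph{and} $q\circ g=f\circ q$, and these requirements are incompatible in general. The identity $q\circ g=f\circ q$ forbids $g(w)\in(v,x)$ unless $f(q(w))=x$. So $g$ can only cross the inserted arc along a nondegenerate arc on which $f\equiv x$, and typically there is none.

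Already the simplest case fails. Take an interval pattern of period $n\ge3$. Then $x$ has valence $2$, both directions at $x$ are reattached at $v$, and $x$ becomes an endpoint of $T'$. Let $z\in P$ with $f(z)=x$. The canonical model of an interval pattern is the $P$-linear map, which is strictly monotone on each basic interval, so $f(w)\ne x$ for $w\ne z$ near $z$. The semiconjugacy then forces $g(w)$ to be the unique point over $f(w)$, and this point tends to $v$ as $w\to z$. But $g(z)=f(z)=x\ne v$, so $g$ is discontinuous at $z$. Note also that the troublesome piece of $f^{-1}(x)$ is the point $z\in P$ itself, which is shared by several basic intervals. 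This contradicts your remark that such pieces sit inside a single basic interval.

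\textbf{Why your fix does not close it.} Letting $g$ run through $[v,x]$ on a small neighbourhood necessarily destroys $q\circ g=f\circ q$ there, so Bowen's inequality with factor $f$ is no longer available. At best you get $q\circ g=\tilde f\circ q$ for a reparametrized $\tilde f$ that pauses at $x$. But $\tilde f$ is just another model of $\mathcal{P}$, so minimality of the canonical model gives only $h(\tilde f)\ge h(\mathcal{P})$, which is the wrong direction.

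\textbf{The missing idea.} Compare entropies through Markov matrices rather than an exact factor map.
\begin{enumerate}
\item Put $Q':=P'\cup V(T')$. Define $g(v):=f(x)$, and for $u\ne v$ let $g(u)$ be the point of $T'$ over $f(q(u))$, taking $x$ itself when $f(q(u))=x$. Extend $g$ monotonically on each $Q'$-basic interval.
\item By Proposition~\ref{4.2 aglmm}, $(T',Q',g)$ is a Markov model, and $g|_{P'}=f|_P$.
\item Since $q(g(u))=f(q(u))$ on $Q'$ and $q$ is monotone, $q$ gives a bijection between the $Q'$-basic intervals other than $[v,x]$ and the $(P\cup V(T))$-basic intervals of $T$. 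Under it, $g(I)\supset J$ if and only if $f(q(I))\supset q(J)$.
\item The only additional arrows point to $[v,x]$. The interval $[v,x]$ itself covers nothing, because $g([v,x])=\{f(x)\}$.
\end{enumerate}
Hence the Markov matrix of $g$ is block triangular with diagonal blocks $M_f$ and $(0)$, so it has the same spectral radius as $M_f$. Proposition~\ref{markov} then gives $h(\mathcal{O})\le h(g)=h(f)=h(\mathcal{P})$. This same zero-row argument is exactly what you would need to prove $h(\tilde f)=h(f)$ if you kept the semiconjugacy route.
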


\begin{remark}\label{skcc}
Recall (Figure~\ref{topcol2}) that the skeleton and the combinatorial collapse of a pattern with a separated structure of trivial blocks
need not coincide. It is clear, however, that the combinatorial collapse is an opening of the skeleton.\qed
\end{remark}

Recall that, when $\mathcal{P}$ has block structures, the one with blocks with maximum cardinality is unique and is called
a \emph{maximal structure}. The following result was proven (but not implicitly stated) in \cite{reducibility}.

\begin{lemma}\label{maxi}
Let $\mathcal{P}$ be a nontrivial periodic pattern having structures of trivial blocks. Then, the maximal structure is separated.
\end{lemma}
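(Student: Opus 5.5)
Let $(T,P,f)$ be the canonical model of $\mathcal{P}$ and let $P_0\cup\dots\cup P_{p-1}$ be its maximal structure of trivial blocks. We must show $\chull{P_i}_T\cap\chull{P_j}_T=\emptyset$ for $i\ne j$.

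\textbf{Step 1: reduce to an intersection point that is a vertex.} Suppose some $\chull{P_i}\cap\chull{P_j}$ is nonempty. Each block is trivial, so $\chull{P_i}$ is the convex hull of a subset of a single discrete component. Two such hulls are subtrees, so their intersection is a subtree; since $P_j\cap\chull{P_i}=\emptyset$, it contains no point of $P$. Pick a point $z$ in this intersection. Because $z\notin P$, $z$ lies in the interior of the convex hull of one discrete component $D_i\supset P_i$ and of one $D_j\supset P_j$. Distinct discrete components meet in at most one point of $P$, so their hulls can meet outside $P$ only if $D_i=D_j$. Hence both blocks lie in one discrete component $D$, and $z$ is a branching point of $\chull{D}$, which is a vertex $v\notin P$.

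\textbf{Step 2: push the overlap forward.} Since $f(P_i)=P_{i+1}$ and $f$ is $P$-monotone on $\chull{D}$, Proposition~\ref{4.2 aglmm} gives $f(\chull{P_i})\supset\chull{P_{i+1}}$. The image $f(v)$ is determined by triples in $D$. Taking triples split across the two blocks, $f(v)$ lies in the hulls of both $P_{i+1}$ and $P_{j+1}$. The orbit of the overlapping pair $\{i,j\}$ therefore propagates, and all pairs $\{i+m,j+m\}$ overlap at vertices. Equivalently, if the relation ``$\chull{P_a}\cap\chull{P_b}\neq\emptyset$'' contains $(i,j)$, it contains the whole $f$-orbit of $(i,j)$. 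Its transitive closure is then an $f$-invariant equivalence relation on the blocks, and its classes merge blocks into coarser blocks $Q_0\cup\dots\cup Q_{q-1}$ with $q<p$.

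\textbf{Step 3: the coarser blocks are trivial, contradicting maximality.} Each merged class lies in a single discrete component, by the argument of Step 1 applied inductively along the chain of overlapping pairs: overlapping hulls force a common component. One must also check $\chull{Q_a}\cap Q_b=\emptyset$ for $a\ne b$. I would get this from invariance: a point of $Q_b$ inside $\chull{Q_a}$ would sit in the common component and separate points, which iteration would contradict. So the $Q$'s form a structure of trivial blocks of larger cardinality, contradicting maximality.

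\textbf{Main obstacle.} The hardest step is Step 2: showing rigorously that an overlap of hulls at a vertex is transported by $f$ to an overlap of the image hulls. This requires care with Proposition~\ref{4.2 aglmm} and with the fact that $f(v)$ may itself lie in $P$. For that case I would first try the following: if $f(v)\in P$, it lies in some block yet inside the hull of another block, which contradicts the condition $\chull{P_a}\cap P_b=\emptyset$ directly.
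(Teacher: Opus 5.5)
Your proof takes a genuinely different route from the paper's, and after three small repairs listed below it is complete. The paper does not prove that overlaps propagate. Instead it imports from the proof of Lemma~4.1 of \cite{reducibility} an $m$-periodic orbit of vertices $y,\dots,f^{m-1}(y)\notin P$, with $m\mid p$, such that $f^i(y)$ lies in the \emph{common} intersection of all $\chull{P_{i+jm}}$. It then merges blocks into $\overline{P}_i=\bigcup_j P_{i+jm}$. You instead take the overlap graph on the blocks and observe that it is invariant under the cyclic shift $i\mapsto i+1$. Hence its connected components are the cosets of a subgroup $m\mathbb{Z}_p$ with $m<p$, and chains of pairwise overlaps replace the paper's common point. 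That suffices precisely because of your Step~1: two trivial blocks with overlapping hulls lie in the same discrete component, which is unique since blocks have at least two points. Your version is self-contained and elementary. The paper's is shorter and yields more geometric information (a periodic orbit of vertices), but relies on an external lemma.

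\textbf{Repair 1: Step~2 points the wrong way.} As written, $f(\chull{P_i})\supseteq\chull{P_{i+1}}$ is mere continuity and says nothing about where $f(v)$ lands. What you need is $f(\chull{P_i})\subseteq\chull{P_{i+1}}$. This is immediate from $P$-monotonicity of the canonical model. Write $\chull{P_i}=\bigcup_{a,b\in P_i}[a,b]$; since $a,b$ lie in one discrete component, $f([a,b])=[f(a),f(b)]$. Therefore $f(\chull{P_i})=\chull{P_{i+1}}$ and
$f(\chull{P_i}\cap\chull{P_j})\subseteq\chull{P_{i+1}}\cap\chull{P_{j+1}}$. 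No triples, no Proposition~\ref{4.2 aglmm} and no vertex are needed, and the worry about $f(v)\in P$ disappears. Your ``main obstacle'' is thus a one-liner.

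\textbf{Repair 2: the case $q=1$.} You never use that $\mathcal{P}$ is nontrivial, and you must. If the overlap graph is connected ($q=1$), the merged partition is not a block structure, since those need at least two blocks, so maximality is not contradicted. Instead, all of $P$ then lies in a single discrete component, i.e.\ $\mathcal{P}$ is trivial, which is the contradiction.

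\textbf{Repair 3: the check $\chull{Q_a}\cap Q_b=\emptyset$.} This needs no iteration argument. The set $Q_a$ lies in a discrete component $D$, and every point of $D$ is an endpoint of $\chull{D}$. Hence $\chull{Q_a}\cap P=Q_a$, which gives the disjointness directly.
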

\begin{proof}
Let $(T,P,f)$ be the canonical model of $\mathcal{P}$. Let $P_0\cup P_1\cup\cdots\cup P_{p-1}$ be the maximal structure.
Assume, by way of contradiction, that this structure is not separated. In the proof of Lemma~4.1 of \cite{reducibility} it is
shown that, in this case, there exists a divisor $m\ge1$ of $p$ and an $m$-periodic orbit $\{y,f(y),\ldots,f^{m-1}(y)\}\subset
V(T)\setminus P$ such that
\begin{equation}\label{jjj}
f^i(y) \in \bigcap_{j=0}^{\frac{p}{m}-1}\chull{P_{i+jm}}
   \text{ for } 0\le i\le m-1.
\end{equation}
If $m=1$, then (\ref{jjj}) implies that the convex hulls of all blocks $P_i$ intersect at a single point $y$. Since the blocks are trivial,
this implies that all blocks are contained in a unique discrete component and $\mathcal{P}$ is, in consequence, a trivial pattern, in contradiction
with the hypotheses. On the other side, if $m>1$ then we set
\[
  \overline{P}_i:=\bigcup_{j=0}^{\frac{p}{m}-1} P_{i+jm}\,
    \text{ for } 0\le i\le m-1
\]
and observe that $\overline{P}_0\cup\overline{P}_1\cup\cdots\cup\overline{P}_{m-1}$ defines an $m$-block structure for $\mathcal{P}$.
Moreover, the fact that the blocks $P_i$ are trivial implies that each block $\overline{P}_i$ is trivial, in contradiction with the maximality of the
structure $P_0\cup P_1\cup\cdots\cup P_{p-1}$.
\end{proof}

Finally we state the last ingredient we need to prove Proposition~\ref{SC}.

\begin{lemma}\label{basic}
Let $\mathcal{P}$ be a periodic pattern having structures of trivial blocks and let $\mathcal{C}$ be the combinatorial collapse of $\mathcal{P}$.
Then, there exists a pattern $\mathcal{P}'$ such that $h(\mathcal{P}')=h(\mathcal{C})$ and $\mathcal{P}$ is either $\mathcal{P}'$ or an opening of $\mathcal{P}'$.
\end{lemma}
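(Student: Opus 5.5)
Starting from $\mathcal{P}$ and its maximal structure $P_0\cup\cdots\cup P_{p-1}$, I will build $\mathcal{P}'$ by \emph{closing} $\mathcal{P}$: each discrete component of $\mathcal{P}$ that meets several blocks is split into pieces, in such a way that $\mathcal{P}'$ has the same trivial block structure, its structure is separated, and its skeleton is exactly $\mathcal{C}$. Then Proposition~\ref{81} gives $h(\mathcal{P}')=h(\mathcal{C})$, and by construction $\mathcal{P}$ is obtained from $\mathcal{P}'$ by joining adjacent components back together, i.e. it is $\mathcal{P}'$ itself or an opening of $\mathcal{P}'$.

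\emph{Construction of $\mathcal{P}'$.} Let $(S,Q,g)$ be a model of $\mathcal{C}$ and let $(T,P,f)$ be a model of $\mathcal{P}$. For each block, $P_i$ lies in a single discrete component $D_i$ of $\mathcal{P}$, since the blocks are trivial. I will build a new pointed tree $T'$ by taking the tree $S$ and blowing up each point $i\in Q$ into a small tree $\chull{P_i}$. This small tree is a star carrying the points of $P_i$, so that $P_i$ forms one discrete component there. Every edge of $\chull{Q}_S$ emanating from $i$ is attached at a suitable point of $P_i$. Concretely, if a discrete component $D$ of $\mathcal{C}$ contains $i,j$, then by Definition~\ref{combicolla}(b) some discrete component $E$ of $\mathcal{P}$ meets both $P_i$ and $P_j$. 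I choose a representative point $x_i^D\in E\cap P_i$ and attach to it, via the convex hull of the $D$-edges, a component $\{x_i^D: i\in D\}$. The dynamics is $f\evalat{P}$, unchanged. The components of $\mathcal{P}'$ are then the sets $P_i$ (the within-block components) together with the ``connecting'' components $\{x^D_i\}_{i\in D}$, one for each component $D$ of $\mathcal{C}$.

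\emph{Verifications.} Three things must be checked.
\begin{enumerate}
\item[(i)] $\mathcal{P}'$ is a well-defined pattern with the same block structure. This follows because the $f$-images of blocks are blocks and the combinatorics is read from $\mathcal{C}$.
\item[(ii)] The structure of $\mathcal{P}'$ is separated. Here I would use Lemma~\ref{maxi} applied to $\mathcal{P}'$, which requires showing that the blocks $P_i$ still give the maximal trivial structure of $\mathcal{P}'$; alternatively I would argue directly via the canonical model. Either way, by construction its skeleton is $\mathcal{C}$.
\item[(iii)] Each original component of $\mathcal{P}$ is a union of adjacent components of $\mathcal{P}'$ glued at common points. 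Joining them successively realizes $\mathcal{P}$ as an iterated 2-opening.
\end{enumerate}

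\emph{Main obstacle.} The delicate point is the choice of the representatives $x_i^D$ so that (iii) holds exactly. An original component $E$ may meet several blocks in several points, and these must be recoverable as unions of pieces that are consistently adjacent. The choice must also be compatible with the dynamics, so that $\mathcal{P}'$ is a genuine periodic pattern whose block images match. I would first try to take the components of $\mathcal{P}'$ to be $E\cap P_i$-pieces merged into $D_i$, together with $E$ itself with each $E\cap P_i$ collapsed to one point. If this fails to yield an opening relation, I would instead define $\mathcal{P}'$ as the pattern whose components are the $D_i$ plus, for each original $E$ meeting two or more blocks, the set $E$ modified so that it keeps one point per block. This last version seems the most natural one to make (iii) transparent.
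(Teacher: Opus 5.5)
Your overall strategy is the paper's: split components of $\mathcal{P}$ until every block of the maximal structure is a full discrete component, so that $\mathcal{P}$ is an iterated opening of the result $\mathcal{P}'$, then identify the skeleton of $\mathcal{P}'$ with $\mathcal{C}$ and apply Proposition~\ref{81}. However, the step that carries the content, your (iii), is left unresolved; you yourself call it the main obstacle.

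Your second fallback fails. If the components of $\mathcal{P}'$ are the $D_i$ together with ``$E$ keeping one point per block'', then for $E=D_i$ these two sets share at least two points, which distinct discrete components cannot do. Moreover, blocks would not be full components, so the skeleton need not be $\mathcal{C}$.

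Your main construction starts from a model of $\mathcal{C}$ and then has to reassemble $\mathcal{P}$. For that you implicitly need two facts you never prove. First, for each component $D$ of $\mathcal{C}$, one and the same component $E_D$ of $\mathcal{P}$ must meet every $P_i$ with $i\in D$; you choose $E$ pair by pair. Second, $E_D\cap P_i$ must be either a single point or all of $P_i$. Without these, the points $x_i^D$ are not even well defined, and $E_D$ need not be the union of $\{x_i^D\}_{i\in D}$ with the blocks it contains.

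The missing observation is elementary and makes the detour through $\mathcal{C}$ unnecessary. Two distinct discrete components share at most one point, so a component $C$ meeting $P_i$ in two or more points must be $D_i\supset P_i$. Hence, whenever $P_i\subsetneq C$, replace $C$ by $P_i$ and $(C\setminus P_i)\cup\{x\}$ for a fixed $x\in P_i$. These two pieces are adjacent at $x$, so this move is exactly the inverse of a 2-opening. Doing it once per block (the paper's Rule B, with $x=i$) builds $\mathcal{P}'$ directly from $\mathcal{P}$. The opening relation is then automatic, and no consistency of representatives is needed.

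Your claim that the skeleton is $\mathcal{C}$ ``by construction'' also needs an argument, because the skeleton is computed in the canonical model, not in your tree $T'$. Once each block is a full discrete component, the hulls of two blocks are hulls of distinct components. Such hulls meet at most in a common point of $P$, which cannot exist since blocks are disjoint; so the structure is separated. Two collapsed blocks are then consecutive if and only if some component meets both. Each split preserves this relation: the piece $(C\setminus P_i)\cup\{x\}$ meets exactly the blocks that $C$ met, and $P_i$ meets only one. If you prefer to invoke Lemma~\ref{maxi} instead, maximality for $\mathcal{P}'$ follows because each component of $\mathcal{P}'$ lies inside a component of $\mathcal{P}$. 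Any coarser trivial structure for $\mathcal{P}'$ would therefore be one for $\mathcal{P}$.
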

\begin{proof}
Let $n$ be the period of $\mathcal{P}$ and let $p$ be the number of blocks of the maximal structure. All $n$-periodic patterns considered in this proof will be time labeled
with integers $\{0,1,\ldots,n-1\}$. We recall also the standing convention in Remark~\ref{standing2}, so the maximal structure of $\mathcal{P}$ is given by the partition
$K_0\cup K_1\cup\cdots\cup K_{p-1}$, with
\[ K_i:=\{i,i+p,i+2p,\ldots,i+(k-1)p\}, \mbox {with }k=n/p. \]

We will construct the prescribed pattern $\mathcal{P}'$ in exactly $p$ steps.
More precisely, we will define a sequence of patterns $(\mathcal{P}_i)_{i=0}^{p-1}$ satisfying:
\begin{enumerate}
\item either $\mathcal{P}_0=\mathcal{P}$ or $\mathcal{P}$ is a 2-opening of $\mathcal{P}_0$
\item for any $1\le i\le p-1$, either $\mathcal{P}_i=\mathcal{P}_{i-1}$ or $\mathcal{P}_{i-1}$ is a 2-opening of $\mathcal{P}_i$
\item for any $0\le i\le p-1$, $K_0\cup K_1\cup\cdots\cup K_{p-1}$ is a maximal structure for $\mathcal{P}_i$ and the
combinatorial collapse of $\mathcal{P}_i$ is $\mathcal{C}$.
\item $h(\mathcal{P}_{p-1})=h(\mathcal{C})$.
\end{enumerate}
It is clear, then, that the lemma will be proven if we take $\mathcal{P}':=\mathcal{P}_{p-1}$.

Let us start the iterative construction of the sequence of patterns. We advise the reader to follow the process over a particular example,
for instance over the 18-periodic pattern shown in Figure~\ref{omg}, that has a maximal structure of 6 blocks of 3 points.
The corresponding combinatorial collapse is shown in the same picture.

Set $\mathcal{P}_{-1}:=\mathcal{P}$. Now let $i\in\{0,1,\ldots,p-1\}$ and assume that $K_0\cup K_1\cup\cdots\cup K_{p-1}$ is a maximal structure
for $\mathcal{P}_{i-1}$ (this is obviously the case for $i=0$). Next we list the two rules to get $\mathcal{P}_i$
from $\mathcal{P}_{i-1}$ depending on two cases:
\begin{enumerate}
\item[Rule A:] Assume that the block $K_i$ is a full discrete component of $\mathcal{P}_{i-1}$. In this case, we set $\mathcal{P}_i:=\mathcal{P}_{i-1}$.
\item[Rule B:] Assume that the block $K_i$ is strictly contained in a discrete component of $\mathcal{P}_{i-1}$. Let $C_1,C_2,\ldots,C_m$ be the discrete
components of $\mathcal{P}_{i-1}$ and assume without loss of generality that the block $K_i$ is contained in $C_1$. In this case, define the pattern $\mathcal{P}_i$
as the one whose discrete components are
\[ K_i,(C_1\setminus K_i)\cup \{i\},C_2,C_3,\ldots,C_m. \]
\end{enumerate}

As an example of the previous iterative procedure, see Figure~\ref{omg2}, where we show the 6 steps corresponding to the pattern $\mathcal{P}$
shown in Figure~\ref{omg}. In step 0, the block $K_0=\{0,6,12\}$ is a full discrete component of $\mathcal{P}_{-1}=\mathcal{P}$, so we apply Rule A
and we simply take $\mathcal{P}_0:=\mathcal{P}$. Now, at step 1, the block $K_1:=\{1,7,13\}$ is strictly contained in the discrete component
$C=\{0,1,7,9,13\}$ of $\mathcal{P}_0$. So, we apply Rule B and $\mathcal{P}_1$ is obtained after replacing $C$ by the two discrete components
$K_1,(C\setminus K_1)\cup\{1\}=\{1,7,13\},\{1,0,9\}$ and keeping intact the remaining components. And so on.

\begin{figure}
\centering
\includegraphics[scale=0.5]{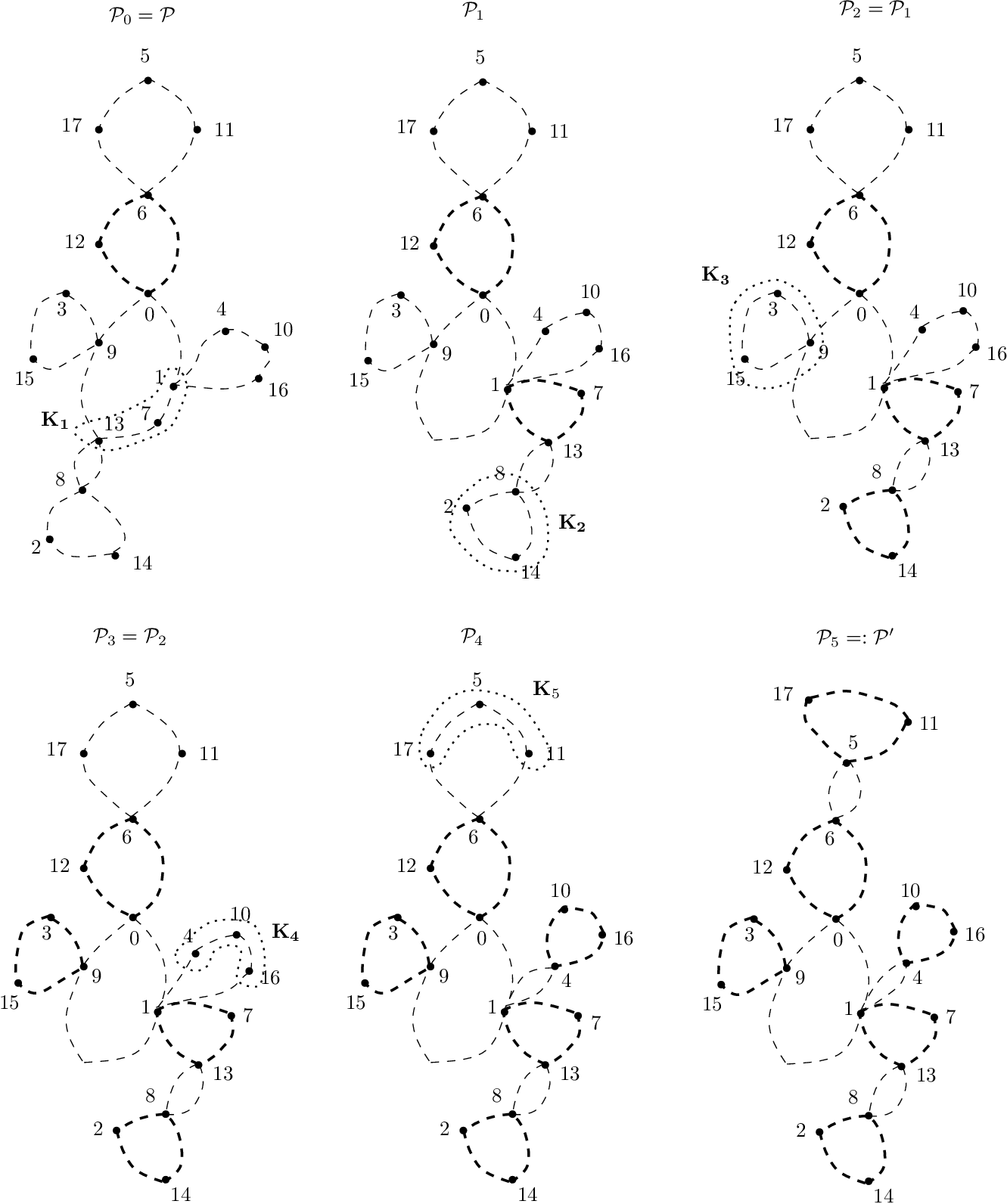}
\caption[fck]{Construction of pattern $\mathcal{P}'$ of Lemma~\ref{basic} corresponding to the pattern $\mathcal{P}$ of Figure~\ref{omg}.\label{omg2}}
\end{figure}

Observe that, when using Rule B, we are separating the block $K_i$ as a new full discrete component. Since $i\in K_i$, the two new discrete components,
$K_i$ and $(C_1\setminus K_i)\cup \{i\}$, are adjacent at the common point $i$. In other words, if we join them together as
discrete components of $\mathcal{P}_i$, then we recover the pattern $\mathcal{P}_{i-1}$. Thus, $\mathcal{P}_{i-1}$ is a 2-opening
of $\mathcal{P}_i$ and properties (a,b) above hold.

Property (c) is easily derived by induction. Indeed, let $0\le i<p$ and assume that $K_0\cup K_1\cup\cdots\cup K_{p-1}$ is a maximal structure
for $\mathcal{P}_{i-1}$, and that $\mathcal{C}$ is the combinatorial collapse of $\mathcal{P}_{i-1}$ ($\mathcal{P}_0$ satisfies these assumptions
since $\mathcal{P}_{-1}=\mathcal{P}$).

We want to see that $K_0\cup K_1\cup\cdots\cup K_{p-1}$ is also a maximal structure for $\mathcal{P}_i$, and that the
combinatorial collapse of $\mathcal{P}_i$ is $\mathcal{C}$. This is obvious if we have applied Rule A to obtain $\mathcal{P}_i$ from
$\mathcal{P}_{i-1}$, so assume that we have applied Rule B.

Let $(T,P,f)$ be any model of $\mathcal{P}_i$. It is evident that $f(K_j)=K_{j+1\bmod p}$. Moreover, since the blocks are trivial in $\mathcal{P}_{i-1}$,
each $K_j$ is contained in a discrete component of $\mathcal{P}_{i-1}$. Together with Rule B, this implies that each set $K_j$ is contained in a
discrete component of $\mathcal{P}_i$. In consequence, $\chull{K_j}_T\cap K_k=\emptyset$ whenever $j\ne k$
and $K_0\cup K_1\cup\cdots\cup K_{p-1}$ is also a maximal block structure for $\mathcal{P}_i$.

Rule B also implies that if there is a discrete component of $\mathcal{P}_{i-1}$ intersecting two blocks $K_j,K_k$, there is also
a discrete component of $\mathcal{P}_i$ intersecting $K_j$ and $K_k$. So, Definition \ref{combicolla} applies and $\mathcal{C}$
is the combinatorial collapse of $\mathcal{P}_i$.

Summarizing, the sequence $(\mathcal{P}_i)_{i=0}^{p-1}$ satisfies properties (a-c) above. Let us see that property (d) is
also satisfied. Let $\mathcal{S}$ be the skeleton of $\mathcal{P}'$ associated to the maximal structure $K_0\cup K_1\cup\cdots\cup K_{p-1}$
which, by Lemma~\ref{maxi}, is separated. Now, Proposition~\ref{81} tells us that $h(\mathcal{S})=h(\mathcal{P}')$. Finally, since
each block $K_i$ is in fact a full discrete component, it follows that the skeleton $\mathcal{S}$ coincides with the combinatorial
collapse $\mathcal{C}$. So, property (d) holds.
\end{proof}

Now we are ready to prove Proposition~\ref{SC}.

\begin{proof}[Proof of Proposition~\ref{SC}]
Take the maximal block structure of $\mathcal{P}$. By Lemma~\ref{maxi}, it is separated. Thus, it makes sense to consider the associated skeleton $\mathcal{S}$.
By Remark~\ref{skcc}, $\mathcal{C}$ is an opening of $\mathcal{S}$. Then, by Theorem~\ref{5.3}, $h(\mathcal{C})\le h(\mathcal{S})$, which equals $h(\mathcal{P})$ by Proposition~\ref{81}.
Now let $\mathcal{P}'$ be the pattern given by Lemma~\ref{basic}, in such a way that $h(\mathcal{P}')=h(\mathcal{C})$ and $\mathcal{P}$ is either $\mathcal{P}'$
or an opening of $\mathcal{P}'$. Using again Theorem~\ref{5.3} yields $h(\mathcal{P})\le h(\mathcal{P}')$. Summarizing,
\[ h(\mathcal{C})\le h(\mathcal{S})=h(\mathcal{P})\le h(\mathcal{P}')=h(\mathcal{C}). \]
\end{proof}

\section{Construction of star patterns of entropy zero}\label{S6}
Although a point is an element of a topological space and a pattern is a combinatorial object defined as an equivalence class of pointed trees,
recall that by abuse of language we talk about the \emph{points} of a pattern. The same translation from topology to combinatorics can be
applied to the term \emph{valence}. The (combinatorial) \emph{valence} of a point $x$ of a pattern $\mathcal{P}$ is defined as the number of
discrete components of $\mathcal{P}$ containing $x$. Note that if $x$ has combinatorial valence $\nu$, then for any model
$(T,P,f)$ of $\mathcal{P}$, the (topological) valence of the point of $T$ corresponding to $x$ is the same and equals $\nu$. So, in what follows
we will drop the words \emph{combinatorial} and \emph{topological} and will use the term \emph{valence} indistinctly in both senses. Let $x$ be a
point of $\mathcal{P}$. If $x$ has valence 1, it will be called an \emph{endpoint}.

A pattern will be called \emph{simplicial} if all its discrete components have 2 points. See Figure~\ref{exstarrys} (right) for an example of a 7-periodic
simplicial pattern. Note that an interval pattern is simplicial.

Let $k\ge 2$. A pattern $\mathcal{P}$ will be called a \emph{$k$-star pattern} (or simply a \emph{star pattern}) if there exists a model $(T,P,f)$ of
$\mathcal{P}$ such that $T$ is a $k$-star. Since $\En(T)\subset P$ by definition of a model, it follows that a $k$-star pattern has at least $k$ points.
Note that if $k=2$, then $\mathcal{P}$ is simplicial. On the other hand, if $k\ge 3$ then:
\begin{enumerate}
\item[(NS)] If $\mathcal{P}$ is non-simplicial, all points of $\mathcal{P}$ have valence 1 or 2 and there is one discrete component of $k$ points.
\item[(S)] If $\mathcal{P}$ if simplicial, all points of $\mathcal{P}$ have valence 1 or 2 except one point, that has valence $k$.
\end{enumerate}

See Figure~\ref{exstarrys} for an example of both situations. When a $k$-star pattern $\mathcal{P}$ is non-simplicial (in particular, $k\ge3$),
the unique discrete component of cardinality $k$ will be called the \emph{branching component}. In this case, for any model $(T,P,f)$ of $\mathcal{P}$
such that $T$ is a $k$-star, the convex hull of the branching component of $\mathcal{P}$ contains the central point of $T$.

\begin{figure}
\centering
\includegraphics[scale=0.5]{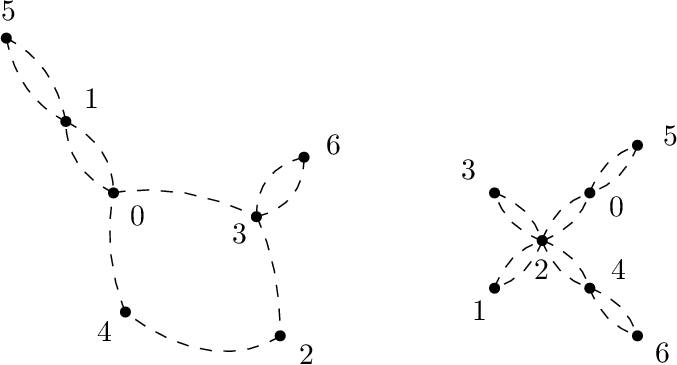}
\caption[fck]{Two examples of 4-star patterns of period 7, satisfying respectively (NS) (left) and (S) (right).\label{exstarrys}}
\end{figure}

To prove Theorem~\ref{mainC}, in particular its ``if'' part, we must elucidate for which values $n,k$ there is an example of a zero
entropy map $\map{f}{T}$, where $T$ is a $k$-star, having an $n$-periodic orbit $P$ with at least one point in each branch of $T$.
In this case, the pattern $\mathcal{P}:=[T,P,f]$ has entropy zero and is a star pattern. It is clear, then, that in order to find such examples
we need a  descriptive way to construct star patterns of entropy zero. Precisely, Theorem~\ref{mainB} does the job: if $h(\mathcal{P})=0$, then $\mathcal{P}$
it strongly collapsible and there exists a sequence of collapses of $\mathcal{P}$ according to Definition~\ref{explosions}.

Assume that we want to construct an example of a (general, not necessarily star) zero entropy pattern of period $n$. Decompose $n$ into a
product of integers $n=p_0p_1\cdots p_r$, with all $p_i$ larger than 1, and
start with a trivial pattern $\mathcal{P}_0$ of $p_0$ points. Now, to construct $\mathcal{P}_1$, ``explode'' each point $i\in\{0,1,\ldots,p_0-1\}$ of
$\mathcal{P}_0$ into a trivial block of $p_1$ points, $\{i,i+p_0,i+2p_0,\ldots,i+(p_1-1)p_0\}$, making sure that the structure defined by the new blocks
is maximal in order that $\mathcal{P}_0$ is the combinatorial collapse of $\mathcal{P}_1$. We will say that $\mathcal{P}_1$ is a \emph{$p_1$-explosion} of $\mathcal{P}_0$.
And so on. For an example with $n=18$, $p_0=3$, $p_1=2$, $p_2=3$, look at Figure~\ref{ullh0} (thinking that the arrows in the picture are now reversed).
In this spirit, if $\mathcal{P}$ is an $n$-periodic pattern such that $h(\mathcal{P})=0$, from now on the sequence $(p_0,p_1,\ldots,p_r)$ in
Definition~\ref{explosions} will be called \emph{the sequence of explosions of $\mathcal{P}$}. In this case, $n=p_0p_1\cdots p_r$.

In the particular case of star patterns, the above process has very particular restrictions. Let us see it.

\begin{lemma}\label{basic1}
Let $\mathcal{P}$ be a periodic star pattern such that $h(\mathcal{P})=0$. Let $\{\mathcal{P}_i\}_{i=0}^r$ be the sequence of collapses of $\mathcal{P}$
and let $(p_i)_{i=0}^r$ be the corresponding sequence of explosions. Then,
\begin{enumerate}
\item[(a)] Each $\mathcal{P}_i$ is a $k_i$-star pattern for some $k_i\ge2$.
\item[(b)] $k_i\le k_{i+1}$.
\item[(c)] If $\mathcal{P}_i$ is non-simplicial, then $\mathcal{P}_{i+1}$ is non-simplicial
\end{enumerate}
\end{lemma}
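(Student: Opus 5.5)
The plan is to reduce everything to a single collapse step and then argue combinatorially, never using a topological realization. Since $\mathcal{P}_{i}$ is the combinatorial collapse of $\mathcal{P}_{i+1}$, and $\mathcal{P}_r=\mathcal{P}$ is a star pattern, a backward induction on $i$ reduces the lemma to the following claim. \emph{If $\mathcal{Q}$ is a $k$-star pattern with a maximal structure of trivial blocks $K_0\cup\cdots\cup K_{p-1}$, and $\mathcal{C}$ is its combinatorial collapse, then $\mathcal{C}$ is a $k'$-star pattern with $k'\le k$. Moreover, if $\mathcal{C}$ is non-simplicial, then so is $\mathcal{Q}$.}

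First I would record a combinatorial characterization of star patterns. Consider the bipartite incidence graph $G(\mathcal{Q})$, whose vertices are the points and the discrete components, with a point joined to each component containing it. This graph is a tree. In it, the degree of a point is its valence and the degree of a component is its cardinality. A pattern is a $k$-star pattern with $k\ge3$ exactly when $G(\mathcal{Q})$ has a unique vertex of degree $\ge3$, that vertex has degree $k$, and all other vertices have degree $\le2$. This matches (NS) when the vertex is a component and (S) when it is a point. The pattern is a $2$-star (interval) pattern when all degrees are $\le2$. The forward direction is (NS)/(S) together with the remark that the branching component's hull contains the central point. For the converse I would place the unique high-degree vertex at the centre of a $k$-star and lay each of the $k$ arms of $G(\mathcal{Q})$, each a path, along one branch.

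Next I would describe $G(\mathcal{C})$ in terms of $G(\mathcal{Q})$. Since each $K_j$ lies in a single discrete component, let $\Gamma$ be the tree obtained from $G(\mathcal{Q})$ by contracting, for each $j$, the subtree spanned by $K_j$ and the component(s) containing it. By Definition~\ref{combicolla}(b), two blocks are joined in $\mathcal{C}$ iff some component of $\mathcal{Q}$ meets both. The key step, which I expect to be the main obstacle, is the following. Because $\Gamma$ is a tree, whenever blocks $j_1,\dots,j_s$ are pairwise joined in $\mathcal{C}$, a single component of $\mathcal{Q}$ must meet all of them; otherwise the pairwise witnesses would form a cycle in $\Gamma$. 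Hence every discrete component of $\mathcal{C}$ of size $s\ge3$ comes from a component of $\mathcal{Q}$ of size $\ge s$. This immediately gives (c).

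Similarly, a point $j$ of $\mathcal{C}$ of valence $\nu$ corresponds to $\nu$ distinct components of $\mathcal{Q}$ touching $K_j$ from outside the contracted subtree. In the spider $G(\mathcal{Q})$, this forces a vertex of degree $\ge\nu$ inside or adjacent to that subtree. Uniqueness of the high-degree vertex of $G(\mathcal{Q})$ then shows that $G(\mathcal{C})$ has at most one vertex of degree $\ge3$, of degree $\le k$. By the characterization, $\mathcal{C}$ is a $k'$-star pattern with $k'\le k$, which proves (a) and (b).

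The delicate bookkeeping is the count of valences through the contracted subtrees. There I would check carefully that the arms of the spider cannot reconnect after contraction, using that the blocks have pairwise disjoint hulls.
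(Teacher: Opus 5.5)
The step that fails is the transfer of uniqueness from $G(\mathcal{Q})$ to $G(\mathcal{C})$. Several blocks can lie inside the branching component $C$ of $\mathcal{Q}$, and when they do:
\begin{itemize}
\item the subtrees you contract all contain the vertex $C$, so $\Gamma$ identifies distinct blocks and is not $G(\mathcal{C})$;
\item on the star, the hulls of these blocks all pass through the central point, so ``pairwise disjoint hulls'' is false;
\item the single vertex $C$ gets charged for several vertices of degree $3$ in $G(\mathcal{C})$. A block $\{a,b\}\subset C$ in which both $a$ and $b$ carry an outer two-point component reaching other blocks gets valence $3$ in $\mathcal{C}$: one edge from $C$, and one from each outer component.
\end{itemize}
Concretely, take $n=24$ and $f(i)=i+1\bmod 24$. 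Let the discrete components be $\{0,1,2,12,13,14\}$ together with the two-point components formed by consecutive entries of $(0,6,18)$, $(12,3,15,9,21)$, $(1,7,19)$, $(13,4,16,10,22)$, $(2,8,20)$ and $(14,5,17,11,23)$. This is a $6$-star pattern. Its maximal structure is $K_j=\{j,j+12\}$, since no structure with larger blocks is trivial. Its combinatorial collapse has discrete components $\{0,1,2\}$, $\{0,3\}$, $\{0,6\}$, $\{1,4\}$, $\{1,7\}$, $\{2,5\}$, $\{2,8\}$, $\{3,9\}$, $\{4,10\}$, $\{5,11\}$. In it the points $0,1,2$ all have valence $3$, so it is not a star pattern, and your one-step claim is false.

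Adding the entropy hypothesis does not rescue it. Collapsing further (blocks $\{j,j+6\}$, then $\{j,j+3\}$) gives the $6$-periodic pattern $\{0,1,2\},\{0,3\},\{1,4\},\{2,5\}$ and then the trivial $3$-periodic pattern. So the example is strongly collapsible and has entropy $0$ by Theorem~\ref{mainB}; I also confirmed this on an explicit monotone Markov model, whose Markov graph is a union of disjoint simple cycles. Hence part (a) fails as stated, and (b) loses its meaning at that level. The paper's own justification of (a) breaks on exactly this example: it claims that exploding points into blocks cannot turn a non-star pattern into a star pattern. So your gap cannot be closed by any argument.

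What your approach gets right is the Helly step, once it is phrased in $G(\mathcal{Q})$ rather than in $\Gamma$. For each $j$, the block $K_j$ together with the components meeting it spans a subtree of $G(\mathcal{Q})$. Pairwise joined blocks give pairwise intersecting subtrees, and pairwise intersecting subtrees of a tree share a vertex, here a component vertex. This proves (c) more carefully than the paper does: the paper reads off a single component meeting $k_i$ blocks directly from the purely pairwise condition in Definition~\ref{combicolla}(b). By induction it also shows that every $\mathcal{P}_i$ has at most one discrete component with three or more points. The paper's endpoint-counting argument also survives: the number of endpoints does not increase under collapse, so (b) holds if $k_i$ is read as that count. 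What neither argument controls is the valence, in the collapse, of points whose blocks sit inside the branching component.
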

\begin{proof}
Note that if $\mathcal{P}_i$ is not a star pattern, i.e. their points cannot be embedded in a star, the same happens with $\mathcal{P}_{i+1}$,
where each point of $\mathcal{P}_i$ has been exploded to a block. Since $\mathcal{P}_r=\mathcal{P}$ is a star pattern, then (a) follows by backward induction.
To see (b), observe that when performing a combinatorial collapse, the number of endpoints decreases (or remains equal), and that $k_i$ is precisely
the number of endpoints of $\mathcal{P}_i$. Finally, to prove (c) recall that $\mathcal{P}_i$ is non-simplicial if and only if it has a branching component
$C$ of $k_i\ge3$ points. Since $\mathcal{P}_i$ is the combinatorial collapse of $\mathcal{P}_{i+1}$, by definition $\mathcal{P}_{i+1}$ has a discrete
component $C'$ intersecting $k_i$-many different blocks. So, $C'$ is a branching component of $\mathcal{P}_{i+1}$ with $|C'|\ge k_i\ge3$, and
$\mathcal{P}_{i+1}$ is non-simplicial.
\end{proof}

\begin{lemma}\label{binary}
Assume that a pattern with a structure of trivial blocks has a discrete component of two points $\{a,b\}$ such that $a$ is an endpoint. Then,
all blocks of the trivial structure have cardinality 2.
\end{lemma}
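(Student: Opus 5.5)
The plan is to use only three facts: trivial blocks lie inside single discrete components, $f$ permutes the blocks cyclically, and $f$ is a bijection on $P$ that preserves the combinatorial structure of the pattern.

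Let $P_0\cup\cdots\cup P_{p-1}$ be the structure of trivial blocks, so $|P_i|=n/p\ge 2$ since $p<n$. Let $\{a,b\}$ be the two-point discrete component with $a$ an endpoint, and let $P_j$ be the block containing $a$. Because $P_j$ is trivial, it lies inside a single discrete component $D$ of the pattern. Since $a$ has valence $1$, the only discrete component containing $a$ is $\{a,b\}$, so $D=\{a,b\}$. Hence $P_j\subset\{a,b\}$, and $|P_j|\ge 2$ forces $P_j=\{a,b\}$. All blocks have the same cardinality $n/p$, which therefore equals $2$, and every block has cardinality $2$.

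Two points need checking. First, a nontrivial block structure satisfies $|P_i|=n/p$ with $p$ a strict divisor of $n$, so each block has at least two points; this is stated in the definition of $p$-block structure. Second, a trivial block lies in a single discrete component; this is the definition of trivial blocks. Neither fact depends on the model, so the argument is purely combinatorial.

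I do not expect a real obstacle. The only subtlety is confirming that "endpoint" means combinatorial valence $1$, so that $\{a,b\}$ is the unique component through $a$. This is exactly the definition given at the start of Section~\ref{S6}.
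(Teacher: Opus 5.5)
Your proof is correct and is essentially the paper's argument. The block containing the endpoint $a$ lies in a single discrete component, which can only be $\{a,b\}$. Since blocks have at least two points, that block equals $\{a,b\}$, and so every block has cardinality $n/p=2$. Your opening remark that $f$ ``preserves the combinatorial structure of the pattern'' is false in general, since $f$ need not map discrete components to discrete components, but your argument never uses it.
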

\begin{proof}
Since a block has at least two points and $a$ is an endpoint, it is obvious that $a,b$ belong to the same block, $K$. Since the blocks are trivial,
$K$ is contained in a single discrete component by definition. It follows that $K=\{a,b\}$. In consequence, all blocks of the trivial structure have
cardinality 2.
\end{proof}

\begin{proposition}\label{generalST}
Let $\mathcal{P}$ be an $n$-periodic star pattern such that $h(\mathcal{P})=0$. Let $\{\mathcal{P}_i\}_{i=0}^r$ be the sequence of collapses of $\mathcal{P}$
and let $(p_i)_{i=0}^r$ be the corresponding sequence of explosions. Then,
$p_1=p_2=\ldots=p_r=2$. Thus, $n=p_02^r$.
\end{proposition}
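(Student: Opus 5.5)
The plan is to show, for each $1\le i\le r$, that the pattern $\mathcal{P}_i$ has a discrete component of exactly two points, one of which is an endpoint. Once we have this, Lemma~\ref{binary} applies to $\mathcal{P}_i$ with its maximal structure of trivial blocks of cardinality $p_i$ (Definition~\ref{explosions}(c)). It forces every block to have cardinality $2$, so $p_i=2$. The identity $n=\prod_{j=0}^r p_j$ then gives $n=p_02^r$.

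To find such a component, fix $i\ge1$. By Lemma~\ref{basic1}(a), $\mathcal{P}_i$ is a $k_i$-star pattern, so we may take a model $(T,P,f)$ of $\mathcal{P}_i$ with $T$ a $k_i$-star. By Remark~\ref{captrivial}, $\mathcal{P}_i$ has at least two discrete components. We then split according to (S) and (NS).

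\emph{Simplicial case, including $k_i=2$.} Every discrete component has two points. Take any endpoint $a$ of $T$; it lies in $P$ because $\En(T)\subset P$. The unique discrete component containing $a$ has the form $\{a,b\}$, and it is the required component.

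\emph{Non-simplicial case.} There is a branching component $B$ of $k_i\ge3$ points whose convex hull contains the central point $y$ of $T$. Every other discrete component has convex hull avoiding $y$ in its interior, so it lies in a single branch, which is an interval. A discrete component contained in an interval consists of two consecutive points, so every non-branching component has exactly two points. Since there are at least two components, some branch $\beta$ contains a non-branching component. Going along $\beta$ from $y$ outward, the points of $P\cap\beta$ are linearly ordered and consecutive pairs form the non-branching components on $\beta$. The last such pair contains the endpoint of $T$ lying in $\beta$, and that point has valence $1$. This gives a two-point discrete component $\{a,b\}$ with $a$ an endpoint.

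The main point requiring care is the non-simplicial case. There one must justify that the branching component meets each branch in its innermost point of $P$. Consequently, a branch carrying further points of $P$ ends with a two-point component containing the branch endpoint, and the branching component cannot contain all the endpoints. This follows directly from the definition of consecutive points on a star, since any two points of $P$ on the same branch with no point of $P$ between them are consecutive. With this established, the case split above together with Lemma~\ref{binary} completes the argument.
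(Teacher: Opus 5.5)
Your proof is correct and follows the paper's argument: Lemma~\ref{basic1}(a) and Remark~\ref{captrivial} show that each $\mathcal{P}_i$ with $i\ge1$ is a nontrivial star pattern, and then Lemma~\ref{binary} forces $p_i=2$. The only difference is that the paper states without proof that every nontrivial star pattern has a two-point discrete component containing an endpoint, whereas you justify this correctly by splitting into the simplicial and non-simplicial cases.
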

\begin{proof}
By Lemma~\ref{basic1}(a), $\mathcal{P}_i$ is a star pattern for $0\le i\le r$. Remark~\ref{captrivial} tells us that $\mathcal{P}_i$ is
nontrivial when $i\ne 0$. Now observe that any nontrivial star pattern has at least one binary discrete component $\{a,b\}$ such that $a$ is an endpoint.
From Lemma~\ref{binary}, it follows that $p_i=2$ for each $i\ne 0$.
\end{proof}

Let $\mathcal{P}$ be a zero entropy periodic star pattern. Let $\{\mathcal{P}_i\}_{i=0}^r$ be the sequence of collapses of $\mathcal{P}$.
By Lemma~\ref{basic1}(a), there exists a unique sequence of integers $(k_i)_{i=0}^r$ such that $\mathcal{P}_i$ is a $k_i$-star pattern. It
will be called the \emph{sequence of valences} of $\mathcal{P}$. The next remark collects several obvious items.

\begin{figure}
\centering
\includegraphics[scale=0.6]{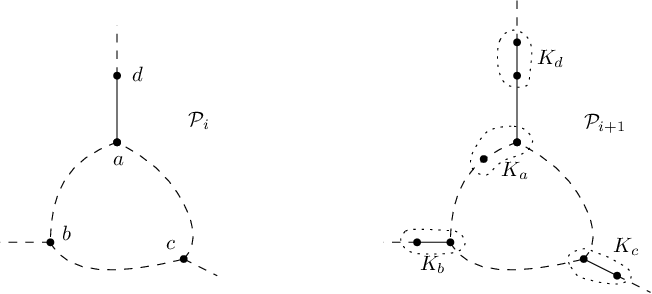}
\caption[fck]{Local picture of an expanding 2-explosion of type EE1.\label{EE1}}
\end{figure}

\begin{remark}\label{basic2}
Let $(k_i)_{i=0}^r$ be the sequence of valences associated to a sequence of collapses $\{\mathcal{P}_i\}_{i=0}^r$ of a star pattern. Then,
\begin{enumerate}
\item[(a)] $k_i$ coincides with the number of endpoints of $\mathcal{P}_i$.
\item[(b)] When $\mathcal{P}_i$ is non-simplicial, $k_i$ is also the cardinality of the branching component.
\item[(c)] When $\mathcal{P}_i$ is simplicial (and $i\ne 0$), $k_i$ is the maximum valence of a point of $\mathcal{P}_i$. If $k_i\ge3$, there is only
one point attaining the maximum valence $k_i$.
\end{enumerate}
\end{remark}

If $k_{i+1}>k_i$ (equivalently, $\mathcal{P}_{i+1}$ has more endpoints than $\mathcal{P}_i$), we will say
that $\mathcal{P}_{i+1}$ is an \emph{expanding explosion of $\mathcal{P}_i$}. Let us examine all the ways this can happen. First of all, by Proposition~\ref{generalST},
$\mathcal{P}_{i+1}$ is a 2-explosion of $\mathcal{P}_i$. So, each point $a$ in $\mathcal{P}_i$ is replaced by a block of two points $K_a$ in $\mathcal{P}_{i+1}$.
We have the following possibilities for an expanding 2-explosion.
\begin{enumerate}
\item[(EE1)] If $\mathcal{P}_i$ if non-simplicial, then $\mathcal{P}_{i+1}$ is also non-simplicial by Lemma~\ref{basic1}(c). For at least one point $a$ of the branching component of
$\mathcal{P}_i$, the branching component of $\mathcal{P}_{i+1}$ contains the two points of the block $K_a$, and one of the two points is an endpoint. See Figure~\ref{EE1}.
\item[(EE2)] If both $\mathcal{P}_i$ and $\mathcal{P}_{i+1}$ are simplicial, let $a$ be a point of $\mathcal{P}_i$ of valence $k_i$. In $\mathcal{P}_{i+1}$, the block $K_a$
is a full discrete component of two points. One of them is an endpoint and the other one has valence $k_i+1$. See Figure~\ref{EE2}. Note that this construction cannot be performed
when $\mathcal{P}_i=\mathcal{P}_0$. In this case, $\mathcal{P}_0$ is the trivial pattern of two points and, if $\mathcal{P}_1$ is simplicial, it is necessarily a 4-periodic
interval pattern, which is 2-star. Thus, $k_0=k_1=2$ and the explosion is non-expansive.
\begin{figure}
\centering
\includegraphics[scale=0.6]{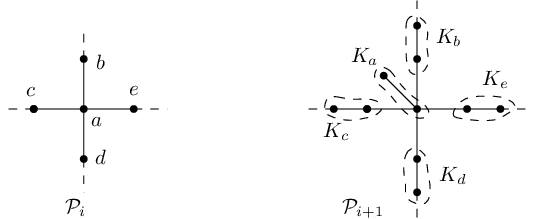}
\caption[fck]{Local picture of an expanding 2-explosion of type EE2.\label{EE2}}
\end{figure}
\item[(EE3)] Assume that $\mathcal{P}_i$ is simplicial and $\mathcal{P}_{i+1}$ is non-simplicial. This case is remarkable and very restrictive. Indeed, all possible
2-explosions (expansive or not) are drawn in Figure~\ref{EE3}. It turns out that $\mathcal{P}_{i+1}$ can be a 2-explosion (expansive or not) of $\mathcal{P}_i$
if and only if $k_i=2,3$. In other words, it is not possible to obtain a a non-simplicial pattern from an expanding 2-explosion of a simplicial pattern having a
point of valence bigger than 3. To see it, add a discrete component $\{a,e\}$ to the (local) distribution $\{a,b\},\{a,c\},\{a,d\}$ drawn in Figure~\ref{EE3} (bottom).
The block $K_e$ should share a discrete component with the block $K_a$, but \emph{not} with the blocks $K_b,K_c,K_d$. This would add an extra valence to one of
the two points of the block $K_a$, a contradiction since a star pattern cannot have points of valence 3.
\end{enumerate}

\begin{figure}
\centering
\includegraphics[scale=0.6]{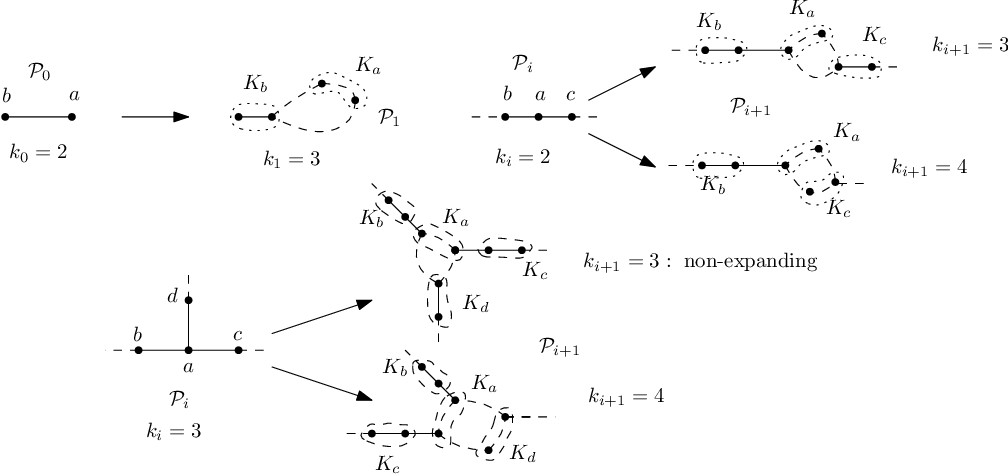}
\caption[fck]{Local pictures of all possible 2-explosions from a simplicial pattern to a non-simplicial one. All possibilities
are EE3 (expanding) except the case $k_i=k_{i+1}=3$. The case $\mathcal{P}_i=\mathcal{P}_0$
is special: the blocks $K_a,K_b$ cannot be fully contained in a single discrete component for otherwise $\mathcal{P}_1$ would
be a trivial pattern. \label{EE3}}
\end{figure}

The previous discussion leads to the following technical result.

\begin{lemma}\label{endpoint}
Let $\mathcal{P}$ and $\mathcal{Q}$ be zero entropy star periodic patterns such that $\mathcal{Q}$ is an expanding 2-explosion of $\mathcal{P}$.
If $\mathcal{Q}$ is non-simplicial, then there is at least one block of the maximal structure of $\mathcal{Q}$ intersecting the branching component. Moreover,
at least one of the two points of the block is an endpoint.
\end{lemma}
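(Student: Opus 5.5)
The plan is to split into two cases according to whether $\mathcal{P}$ is simplicial or not, following the case analysis (EE1)--(EE3) given just before the statement. By Proposition~\ref{generalST}, $\mathcal{Q}$ is a 2-explosion of $\mathcal{P}$. So every block of the maximal structure of $\mathcal{Q}$ has the form $K_a=\{a',a''\}$, one block for each point $a$ of $\mathcal{P}$. Let $B$ be the branching component of $\mathcal{Q}$, with $|B|=k_{\mathcal{Q}}$ and $k_{\mathcal{Q}}>k_{\mathcal{P}}$ by expansiveness. Remark~\ref{basic2}(a) identifies $k_{\mathcal{Q}}$ with the number of endpoints of $\mathcal{Q}$, and I will use that throughout.

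\emph{Case 1: $\mathcal{P}$ non-simplicial.} Then $\mathcal{P}$ has a branching component $C$ with $|C|=k_{\mathcal{P}}$. By Definition~\ref{combicolla}(b), $B$ intersects exactly the blocks $K_a$ with $a$ in a single discrete component of $\mathcal{P}$; since $B$ meets at least three blocks, this component is $C$. Each point of $B$ lies in some block $K_a$ with $a\in C$, giving $k_{\mathcal{Q}}=|B|\le 2|C|$. Since $|B|>|C|$, the pigeonhole principle gives some $a\in C$ with $K_a\subset B$.

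It remains to show that one of the two points of $K_a$ is an endpoint. Each point of $\mathcal{Q}$ has valence $1$ or $2$ by (NS). The block $K_a$ is trivial, hence lies in a single discrete component, and that component must be $B$, because two distinct discrete components share at most one point. Suppose both $a'$ and $a''$ had valence $2$. Then there would be components $D'\ni a'$ and $D''\ni a''$, both different from $B$. I would argue that this is impossible, since otherwise $\mathcal{Q}$ could not be embedded in a star: $B$ contains the central point in its convex hull, each of $a',a''$ starts a separate branch, and the counting of endpoints would fail. The cleaner route is to count endpoints directly: $\mathcal{Q}$ has $k_{\mathcal{Q}}$ endpoints, one per branch, and each point of $B$ is either an endpoint or leads into a branch containing a unique endpoint. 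I will therefore choose the $a$ more carefully. Among the $k_{\mathcal{Q}}$ branches, consider those whose first point from $B$ is itself an endpoint. If none were endpoints, every branch would contain a point outside $B$. Those points belong to blocks $K_b$ with $b\notin C$, or to blocks $K_a$ with $a\in C$ whose other point is outside $B$. Using the pigeonhole count restricted to pairs fully inside $B$, I would deduce the contradiction.

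\emph{Case 2: $\mathcal{P}$ simplicial, $\mathcal{Q}$ non-simplicial.} This is (EE3), and by the discussion there only $k_{\mathcal{P}}\in\{2,3\}$ can occur. I would check the configurations in Figure~\ref{EE3} directly. In each expanding configuration, some block $K_a$ (for $a$ the central point, or an endpoint neighbour) lies in $B$ with one point of valence one.

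The main obstacle is the endpoint claim in Case 1. The pigeonhole step gives a block inside $B$ cheaply, but excluding the possibility that both of its points continue into further components requires genuinely using the star structure and the $f$-periodicity of blocks. If the direct count fails, I would fall back on dynamics: the blocks inside $B$ map to blocks inside $B$ or elsewhere via $f$, and $f$ permutes blocks. That should propagate an endpoint through the orbit, or else contradict expansiveness, $k_{\mathcal{Q}}>k_{\mathcal{P}}$.
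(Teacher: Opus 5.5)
Your case split matches the paper's: (EE1) when $\mathcal{P}$ is non-simplicial, (EE3) when it is simplicial. In Case~2 you defer to Figure~\ref{EE3}, just as the paper does. The genuine gap is the endpoint claim in Case~1, which you flag yourself and leave open.

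Your pigeonhole step does give a block $K_a=\{a',a''\}\subset B$. Your assertion that $B$ meets at least three blocks is unjustified, since $|B|\ge 4$ only gives two, but the conclusion survives, because in that case $B=K_a\cup K_b$. However, none of the routes you then sketch is carried out, and the first one is wrong. A block inside $B$ whose two points both continue along their branches is perfectly compatible with $\mathcal{Q}$ being a non-simplicial star pattern, so no contradiction can come from $\mathcal{Q}$ alone. The obstruction lives in $\mathcal{P}$. You reach it through condition~(b) of Definition~\ref{combicolla}, which your argument never uses at this point.

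Here is the missing step.
\begin{itemize}
\item Suppose neither $a'$ nor $a''$ is an endpoint. Let $R'$ (resp.\ $R''$) be the set of points lying strictly beyond $a'$ (resp.\ $a''$) on its branch, and let $x'\in R'$, $x''\in R''$ be the points consecutive to $a'$, $a''$.
\item Every discrete component of $\mathcal{Q}$ other than $B$ is a pair of consecutive points of one branch, and blocks are trivial. So each block meeting $R'$ is a pair of consecutive points of $R'$; it cannot be $\{a',x'\}$, since $a'\in K_a$.
\item Hence the only discrete components meeting such blocks are $\{a',x'\}$ and pairs inside $R'$. By~(b), any $d$ with $K_d\subset R'$ is consecutive in $\mathcal{P}$ only with $a$ and with points whose blocks also lie in $R'$. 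The same holds for $R''$.
\item Let $K_{b'}\ni x'$ and $K_{b''}\ni x''$, and pick $c\ne a$ with $K_c\cap B\neq\emptyset$; it exists because $|B|\ge 3$.
\item By~(b), $a$ is consecutive with $b'$, $b''$ and $c$, via $\{a',x'\}$, $\{a'',x''\}$ and $B$ respectively. By the previous points, $b'$, $b''$, $c$ are pairwise non-consecutive.
\item Hence $a$ lies in three distinct discrete components of $\mathcal{P}$, contradicting (NS) for $\mathcal{P}$.
\end{itemize}
So \emph{every} block contained in $B$ has an endpoint, and you need neither a careful choice of $a$ nor any dynamics.

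The same device makes Case~2 rigorous without the picture.
\begin{itemize}
\item The points whose blocks meet $B$ are pairwise consecutive in the simplicial pattern $\mathcal{P}$, so there are at most two of them. Since $|B|\ge 3$, $B$ meets exactly two blocks, and one of them, say $K_a$, lies inside $B$.
\item If $K_a$ had no endpoint, $a$ would have exactly the three neighbours $c,b',b''$, because only $B$, $\{a',x'\}$ and $\{a'',x''\}$ meet $K_a$.
\item This forces $k_{\mathcal{P}}=3$ with $a$ the central point. Then $k_{\mathcal{Q}}\ge 4$ gives $B=K_a\cup K_c$.
\item Finally, $K_c$ must contain an endpoint, since $c$ cannot also have valence $3$.
\end{itemize}
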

\begin{proof}
The explosion has to be either EE1 if $\mathcal{P}$ is non-simplicial or EE3 if $\mathcal{P}$ is simplicial. In each case the lemma holds, see Figures~\ref{EE1}
and \ref{EE3}.
\end{proof}

The above considerations have also a remarkable consequence. For an $n$-periodic simplicial $k$-star pattern of entropy zero, there is an upper bound for
the valence $k$ of the central point of the star in terms of the period $n$, as the following result states.

\begin{lemma}\label{basic3}
Let $\mathcal{P}$ be a nontrivial zero entropy $n$-periodic $k$-star pattern. Let $\{\mathcal{P}_i\}_{i=0}^r$ be the sequence of collapses of $\mathcal{P}$.
If $\mathcal{P}$ is simplicial, then $n=2^{r+1}$ and $k\le r+1$.
\end{lemma}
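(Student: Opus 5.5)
The plan is to combine Lemma~\ref{basic1}(c), which lets us propagate simpliciality backwards along the sequence of collapses, with Proposition~\ref{generalST} for the period and a counting argument on the valences $k_i$ using the expanding explosions (EE1)--(EE3).

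\emph{Period.} Since $\mathcal{P}=\mathcal{P}_r$ is simplicial, the contrapositive of Lemma~\ref{basic1}(c), applied backwards, shows that every $\mathcal{P}_i$ is simplicial. In particular $\mathcal{P}_0$ is a trivial simplicial pattern. A trivial pattern has a single discrete component, so if it is simplicial that component has exactly two points, and $p_0=2$. By Proposition~\ref{generalST}, $p_1=\cdots=p_r=2$, hence $n=2^{r+1}$.

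\emph{Valence bound.} We show by induction that $k_i\le i+1$ for $0\le i\le r$. For the base case, $k_0=2$ because $\mathcal{P}_0$ is the two-point interval pattern. For $i=1$ we have $k_1=2$: by the remark in (EE2), a simplicial $\mathcal{P}_1$ is the 4-periodic interval pattern, so $k_1=2\le 2$. For the inductive step, $k_{i+1}\le k_i$ is impossible to exceed except through an expanding explosion, and Lemma~\ref{basic1}(b) gives $k_i\le k_{i+1}$. Since both $\mathcal{P}_i$ and $\mathcal{P}_{i+1}$ are simplicial, an expanding explosion must be of type (EE2).

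The key step is to show that in a simplicial-to-simplicial 2-explosion $k_{i+1}\le k_i+1$. Here we use Remark~\ref{basic2}: $k_{i+1}$ is the number of endpoints of $\mathcal{P}_{i+1}$. Each endpoint $x$ of $\mathcal{P}_{i+1}$ lies in a two-point discrete component $\{x,y\}$. By Lemma~\ref{binary}'s argument, $\{x,y\}$ is a whole block $K_a$, and the point $a$ of $\mathcal{P}_i$ is the collapse of that block.

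Alternatively, and more directly, we count via the maximum valence, $k_{i+1}=\max$ valence by Remark~\ref{basic2}(c). Let $z\in K_a$ be the point of $\mathcal{P}_{i+1}$ of maximal valence, and let $K_a=\{z,z'\}$. The discrete components containing $z$ are of two kinds:
\begin{itemize}
\item possibly the component $K_a$ itself;
\item components $\{z,w\}$ with $w\in K_b$, $b\neq a$.
\end{itemize}
By the definition of the combinatorial collapse, each such $b$ is a neighbour of $a$ in $\mathcal{P}_i$. Distinct components at $z$ give distinct $b$, since two components $\{z,w\}$ and $\{z,w'\}$ with $w,w'\in K_b$ would force $K_b$, which is trivial and lies in one discrete component, to lie in a three-point component, contradicting simpliciality. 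Hence the valence of $z$ is at most $\mathrm{val}(a)+1\le k_i+1$, and therefore $k_{i+1}\le k_i+1$.

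Induction then gives $k=k_r\le r+1$. The refinement at $i=1$ (namely $k_1=2$) in fact yields $k\le r+1$ even starting from $k_0=2$; the crude bound $k_i\le 2+i$ would be off by one, so this refinement is needed.

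The main obstacle is this off-by-one. If the base case is handled with $k_0=2$ alone, the induction only gives $k\le r+2$. The argument therefore has to use the exceptional remark in (EE2), that the first explosion $\mathcal{P}_0\to\mathcal{P}_1$ between simplicial patterns cannot be expanding, in order to start the induction at $k_1=2$.
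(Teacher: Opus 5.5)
Your proposal is correct and is essentially the paper's proof. The period comes from Lemma~\ref{basic1}(c) and Proposition~\ref{generalST}, and the valence bound comes from $k_1=2$ together with an increase of at most one per (EE2) step; your neighbour-counting argument ($\mathrm{val}(z)\le\mathrm{val}(a)+1$) justifies that increment more explicitly than the paper's appeal to the description of (EE2).

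One slip needs fixing. The claim $k_i\le i+1$ is false at $i=0$, since $k_0=2$, so the induction must start at $i=1$. This is legitimate because $r\ge1$ by nontriviality. In fact your own bound already gives $k_1\le 2$ directly, since every point of $\mathcal{P}_0$ has valence $1$, so the off-by-one you worry about is not a real obstacle.
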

\begin{proof}
Let $(p_i)_{i=0}^r$ be the sequence of explosions of $\mathcal{P}$. Then, $n=p_0p_1\cdots p_r$. By Proposition~\ref{generalST}, $p_1=p_2=\ldots=p_r=2$.
Thus, $n=p_02^r$. Since $\mathcal{P}$ is simplicial, Lemma~\ref{basic1}(c) implies that all patterns $\mathcal{P}_i$ are simplicial. In particular, $\mathcal{P}_0$ is
the trivial pattern of two points. Hence, $p_0=2$ and $n=2^{r+1}$. Now let $(k_i)_{i=0}^r$ be the sequence of valences of $\mathcal{P}$. We have that $k_0=k_1=2$ and
$k_r=k$. The maximum possible value for $k$ is attained when $\mathcal{P}_{i+1}$ is an expanding 2-explosion of $\mathcal{P}_i$ for all $1\le i\le r-1$. In this case,
the 2-explosion is of type EE2 and $k_{i+1}=k_i+1$. Since $k_1=2$ and $k_r=k$, we get that $k\le r+1$.
\end{proof}

In particular, the previous lemma extends the well known result \cite{bgmy} that the period of any zero entropy interval pattern is a power of 2.
For general tree patterns, this result can also be derived immediately from Lemma~4.1 of \cite{forward}.

\section{Proof of Theorem~\ref{mainC}}\label{S7}
The next two results give the examples to prove the ``if'' part of Theorem~\ref{mainC}.

\begin{lemma}\label{1r}
Let $k\ge 3$ and $n=k\cdot 2^q$ for some $q\ge0$. Then, there exists a zero entropy map $\map{f}{T}$ such that $T$ is a $k$-star and $f$ has an $n$-periodic
orbit having at least one point in each branch of $T$.
\end{lemma}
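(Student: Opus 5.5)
The plan is to build the required pattern by iterated $2$-explosions, starting from a rigid rotation, and then to realise it on a $k$-star by a $P$-monotone map. Theorem~\ref{mainB} gives zero entropy of the pattern. Theorem~A of \cite{aglmm} transfers this to the map.

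\emph{The patterns.} Let $\mathcal{P}_0$ be the trivial $k$-periodic pattern: a single discrete component $\{0,1,\ldots,k-1\}$ with $i\mapsto i+1 \bmod k$. It is exhibited by the rigid rotation of a $k$-star whose orbit consists of the $k$ endpoints. Inductively, suppose $\mathcal{P}_j$ has period $m=k2^j$ and satisfies:
\begin{itemize}
\item it has a branching component $B_j$ of $k$ points;
\item every other discrete component has two points;
\item all points have valence at most $2$;
\item it has exactly $k$ endpoints.
\end{itemize}
Define $\mathcal{P}_{j+1}$, of period $2m$, as follows. The branching component is $B_{j+1}:=B_j$ (the lower representatives). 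Every two-point component $\{a,b\}$ of $\mathcal{P}_j$ is kept as a two-point component with the same labels $a,b$. Finally, for every $a\in\{0,\ldots,m-1\}$ we add the new component $\{a,a+m\}$. Combinatorially, each branch of the star is lengthened by hanging a new endpoint $a+m$ at each old point $a$. In order to stay a star (valence $\le 2$), I will attach these new points not at all old points but by interleaving. Concretely, on each branch, which is a path $c=x_0,x_1,\ldots,x_s$ in $\mathcal{P}_j$, the new path is $x_0,x_0+m,x_1,x_1+m,\ldots$. In words, each edge $\{x_t,x_{t+1}\}$ is replaced by the path $x_t,\ x_t+m,\ x_{t+1}$, and the new endpoint $x_s+m$ is placed after $x_s$.

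I must check three facts about $\mathcal{P}_{j+1}$.
\begin{enumerate}
\item It is a $k$-star pattern with $k$ endpoints. This is immediate, since each branch is still a path.
\item The blocks $K_a=\{a,a+m\}$ are trivial, i.e.\ each lies in one discrete component. Hmm: with the interleaving above, $a$ and $a+m$ are consecutive, so $\{a,a+m\}$ is a component, and this holds.
\item The structure $\{K_a\}$ is the maximal one, and its combinatorial collapse is $\mathcal{P}_j$.
\end{enumerate}
For (3), maximality follows from Lemma~\ref{binary}: a branch ends with a two-point component containing an endpoint, so every trivial block structure has blocks of cardinality $2$, and the $2$-block structure is unique. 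For the collapse, note that blocks $K_a,K_b$ meet a common discrete component exactly when $a,b$ share a component of $\mathcal{P}_j$. This is where the interleaving must be chosen carefully: the component $\{x_t+m,x_{t+1}\}$ joins $K_{x_t}$ and $K_{x_{t+1}}$, which is correct, and no spurious pair appears. After $q$ steps, $\mathcal{P}_q$ is strongly collapsible with explosion sequence $(k,2,\ldots,2)$, so $h(\mathcal{P}_q)=0$ by Theorem~\ref{mainB}. The dynamics are automatically consistent, since $f(K_a)=K_{a+1}$ holds with time labels.

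\emph{The map.} Take a $k$-star $T$ and place the points of $\mathcal{P}_q$ along its branches as above, with the branching component containing the central point $y$ in its convex hull. Define $f$ by:
\begin{itemize}
\item $f(y)=y$;
\item $f$ is the prescribed permutation on $P$;
\item $f$ is affine on each $(P\cup\{y\})$-basic interval.
\end{itemize}
The main obstacle is to check that $f$ is $P$-monotone, i.e.\ that it is a monotone model. On two-point components this is automatic. On the branching component one needs that, for $a,b\in B_q$, the points $f(a),f(b)$ lie on different branches, so that $f([a,y])=[f(a),y]$ and the path $[a,b]$ maps monotonically. This holds because $f$ permutes $B_q$, and $B_q$ has exactly one point per branch, by induction from the rotation. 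Once $(T,P,f)$ is a monotone model, Theorem~A of \cite{aglmm} gives $h(f)=h(\mathcal{P}_q)=0$. Since every branch contains points of $P$, this proves the lemma.
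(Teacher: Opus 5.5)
Your overall route is the paper's. You start from the trivial $k$-periodic pattern and perform $q$ non-expanding $2$-explosions; your interleaving is one concrete choice of these. You get zero entropy from the collapse theory, and you realise the pattern on a $k$-star with $f(y)=y$ and $f$ monotone on the $(P\cup\{y\})$-basic intervals. Your check of each explosion step is correct: the blocks are trivial, the structure is maximal by Lemma~\ref{binary}, and the collapse is $\mathcal{P}_j$.

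One minor point: your first description says every two-point component $\{a,b\}$ of $\mathcal{P}_j$ is kept. That contradicts the interleaving you actually use, where $\{x_t,x_{t+1}\}$ is replaced by $\{x_t,x_t+m\}$ and $\{x_t+m,x_{t+1}\}$. Keeping it would give interior points valence $3$, so drop that sentence.

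The real problem is the step you single out as the main obstacle. You argue that $f(a)$ and $f(b)$ lie on different branches for distinct $a,b\in B_q$ because $f$ permutes $B_q$. This is false as soon as $q\ge 1$. Here $B_q=\{0,1,\ldots,k-1\}$ and $f(k-1)=k\notin B_q$. For instance, with $k=3$ and $q=1$ you have $f(2)=3$, and $\{0,3\}$ is a two-point component.

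The conclusion is nevertheless true, for a different reason. In your construction the new point $a+m$ is always placed on the same branch as $a$, and $m=k2^j$ is a multiple of $k$. By induction, the branch containing a point $x$ is the one containing $x \bmod k\in B_0$. Since $k\mid n$, the map $f(x)=x+1 \bmod n$ sends branch index $x\bmod k$ to $x+1\bmod k$. So $f$ acts on branches as the rotation. The $k$ points of $B_q$, one per branch, are sent to $\{1,\ldots,k\}$, again one point per branch, with $k$ on the branch of $0$.

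With this argument in place, for $a\ne b$ in $B_q$ the map $f$ sends $[a,y]$ and $[y,b]$ affinely onto $[f(a),y]$ and $[y,f(b)]$, which meet only at $y$. Hence $f|_{[a,b]}$ is monotone, $(T,P,f)$ is a monotone model, and Theorem~A of \cite{aglmm} gives $h(f)=h(\mathcal{P}_q)=0$ as you intended. The paper simply asserts that this model is canonical, so making this verification explicit is worthwhile, provided it uses the correct reason.
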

\begin{proof}
Let $y$ be the central point of $T$. If $q=0$, then $n=k$. In this case, we define $f$ has a rigid rotation of $T$, with $f(y)=y$.
Its entropy is zero and the set of $k$ endpoints of $T$ is an $n$-periodic orbit.

Assume that $q>0$. Let $\mathcal{P}_0$ be the trivial pattern of $k$ points. Now, let $\mathcal{P}$ be the pattern of period $k\cdot 2^q$ obtained
from $\mathcal{P}_0$ by doing $q$ successive non-expanding 2-explosions. Since $h(\mathcal{P}_0)=0$, Proposition~\ref{SC} tells us that $h(\mathcal{P})=0$.
Of course this pattern is not unique, see Figure~\ref{1rfig} for two different examples with $k=3$, $q=1$. Now consider the model $(T,P,f)$ of $\mathcal{P}$
such that $f(y)=y$ and $f$ is monotone on any $(P\cup\{y\})$-basic interval. It is easy to see that $(T,P,f)$ is the canonical model of $\mathcal{P}$.
Then, $h(f)=h(\mathcal{P})=0$.
\end{proof}

\begin{lemma}\label{2n}
Let $k\ge 3$ and $n=2^q$ for some $q\ge k-1$. Then, there exists a zero entropy map $\map{f}{T}$ such that $T$ is a $k$-star and $f$ has an $n$-periodic
orbit with at least one point in each branch of $T$.
\end{lemma}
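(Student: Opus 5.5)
The plan is to build the required orbit as a zero entropy simplicial $k$-star pattern of period $2^q$, obtained by successive 2-explosions of the trivial 2-point pattern. The valence is raised by one at each step with explosions of type EE2 until it reaches $k$, and afterwards only non-expanding 2-explosions are used. Zero entropy then follows from Proposition~\ref{SC}, and a zero entropy star map comes from the canonical model, exactly as in the proof of Lemma~\ref{1r}.

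\textbf{The sequence of patterns.} Start with $\mathcal{P}_0$, the trivial pattern of 2 points, and let $\mathcal{P}_1$ be the 4-periodic interval pattern of entropy zero. Concretely, take blocks $\{0,2\}$ and $\{1,3\}$ with discrete components $\{0,2\},\{2,1\},\{1,3\}$; this is a 2-explosion of $\mathcal{P}_0$ whose maximal structure has these two blocks. Then $k_0=k_1=2$.

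\textbf{Inductive step.} For $1\le i\le k-2$, construct $\mathcal{P}_{i+1}$ from the simplicial $k_i$-star pattern $\mathcal{P}_i$ by an EE2 explosion. Let $a$ be a point of maximal valence $k_i$ (for $k_i=2$, any point of valence 2). Each point $x$ is replaced by the block $K_x=\{x,x+2^{i+1}\}$, and these blocks are declared to be discrete components. Each component $\{x,z\}$ of $\mathcal{P}_i$ is replaced by a binary component joining one point of $K_x$ to one point of $K_z$. The points of $K_a$ are chosen so that all $k_i$ connections leave from a single point $a'$ of $K_a$; then $a'$ gets valence $k_i+1$ and the other point of $K_a$ is an endpoint. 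At every other point $x$ of valence 2, the two connections are attached to different points of $K_x$, so that no point has valence 3 except $a'$. The result is a simplicial tree pattern, that is, a star with a unique vertex of valence $k_{i+1}=k_i+1$.

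\textbf{Checks at each step.} Two things must be verified.
\begin{enumerate}
\item The combinatorial collapse of $\mathcal{P}_{i+1}$ is $\mathcal{P}_i$. This is immediate from Definition~\ref{combicolla}: two blocks share a discrete component exactly when the corresponding points share one in $\mathcal{P}_i$.
\item The block structure is maximal. Since $\mathcal{P}_{i+1}$ has a binary component $K_a$ containing an endpoint, Lemma~\ref{binary} shows that every structure of trivial blocks has blocks of cardinality 2. By the uniqueness of a $p$-block structure for each $p$, the structure is therefore the one given by the $K_x$.
\end{enumerate}
In the pattern setting the dynamics is automatic: $f(K_x)=K_{f(x)}$ by the time labelling. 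So no compatibility condition on the choice of $a'$ is needed. The one exception would arise if we imposed adjacency rules tied to labels, and we do not.

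After $k-2$ steps we have a simplicial $k$-star pattern of period $2^{k-1}$. For $q>k-1$, continue with $q-k+1$ non-expanding 2-explosions. The simplest is to double each point into a block, with the block $K_{a'}$ keeping all $k$ connections at one point and each other point of valence 2 keeping its two connections at different points of its block. Here the other point of $K_{a'}$ becomes an endpoint, and each endpoint $x$ of $\mathcal{P}_i$ becomes a block with one endpoint. This gives endpoints in abundance while the maximal valence stays $k$. Counting endpoints of the resulting star pattern gives exactly $k$ branches, so the final pattern $\mathcal{P}$ is a $k$-star pattern of period $2^q$. By Proposition~\ref{SC} applied repeatedly, $h(\mathcal{P})=h(\mathcal{P}_0)=0$.

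\textbf{Realization on a $k$-star.} Embed $P$ in a $k$-star $T$ with central point $y$ placed at $a'$. Take $f$ to be $P$-monotone, mapping each basic interval monotonically. This is a monotone model, since all discrete components are binary and no vertex outside $P$ exists, so $h(f)=h(\mathcal{P})=0$ by Theorem~\ref{A-aglmm}. The endpoints of $T$ lie in $P$, so every branch contains orbit points.

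\textbf{Main obstacle.} The delicate point is the endpoint count in the non-expanding explosions: the number of endpoints, and hence of branches, must stay exactly $k$. I would try first to verify that the valence-2 points can always split their two connections across their block. If this count fails, I would instead realize the pattern on the star directly and count branches.
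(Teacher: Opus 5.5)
Your argument has a genuine gap: it never covers the boundary case $n=2^{k-1}$, and it cannot with simplicial patterns. The gap is hidden by a counting error. With $\mathcal{P}_0$ of period $2$ and $\mathcal{P}_1$ of period $4$, the pattern $\mathcal{P}_i$ has period $2^{i+1}$; your own blocks $K_x=\{x,x+2^{i+1}\}$ say so. So after the $k-2$ EE2 steps you reach a simplicial $k$-star pattern $\mathcal{P}_{k-1}$ of period $2^k$, not $2^{k-1}$.

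Once the count is corrected, you need only $q-k$ further non-expanding explosions, and the construction yields the periods $2^q$ with $q\ge k$ only. The remaining case is not a matter of bookkeeping. By Lemma~\ref{basic3}, a zero entropy simplicial $k$-star pattern of period $2^{r+1}$ satisfies $k\le r+1$, so its period is at least $2^k$. For instance, with $k=3$ and $n=4$, the only simplicial $3$-star on four points is a centre with three endpoints. Its blocks $\{0,2\},\{1,3\}$ cannot both be discrete components, so no structure of trivial blocks exists.

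Since $2^{k-1}=k\cdot 2^{q}$ only when $k$ is a power of $2$, Lemma~\ref{1r} does not rescue this case in general. You need a non-simplicial orbit, and this is what the paper constructs. Take the simplicial $(k-1)$-star pattern of period $2^{k-1}$, in which $0$ has valence $k-1$ and $[0,2^{k-2}]$ is a basic interval ending at the endpoint $2^{k-2}$, together with its canonical model $(S,Q,g)$. Pull $0$ off the centre: a new vertex $v$ of valence $k$ carries the $k-2$ other branches at $0$ together with two new edges $[v,0]$ and $[v,2^{k-2}]$. Then set $f(v)=1=g(0)$. Now $f$ collapses $[v,0]$ to a point, so $h(f)=h(g)=0$, and $0$ and $2^{k-2}$ become endpoints. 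For $k=3$ this gives the orbit with discrete components $\{0,1,2\}$ and $\{1,3\}$.

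Your non-expanding step is also wrong as written. If all $k$ connections of the central block are attached to one of its points, that point gets valence $k+1$, because the block itself is a further component. The other point of the block becomes a new endpoint, so you get $k+1$ endpoints. That is an EE2 explosion, not a non-expanding one. To keep a $k$-star you must attach $k-1$ connections at one point of the block and one at the other; any other proper split creates two branching points. With these corrections, your treatment of $q\ge k$ coincides with the paper's.
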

\begin{proof}
Assume first that $q\ge k$ (we will treat separately the case $q=k-1$). Let $\mathcal{P}_2$ be the 4-periodic zero
entropy (interval) pattern whose discrete components are $\{2,0\}$, $\{0,1\}$, $\{1,3\}$. In particular, the valence of the point 0 is 2.
Now, perform the following iterative procedure. For $3\le i\le k$, let $\mathcal{P}_i$ be an expanding 2-explosion of $\mathcal{P}_{i-1}$ of type EE2, in
such a way that:
\begin{enumerate}
\item[(1)] $\mathcal{P}_i$ is an $i$-star simplicial pattern.
\item[(2)] 0 is the unique point of $\mathcal{P}_i$ with valence $i$ and $2^{i-1}$ is an endpoint.
\end{enumerate}
See Figure~\ref{redeu} for an example of the previous iterative procedure with $k=4$. Note that $\mathcal{P}_i$ satisfies:
\begin{enumerate}
\item[(3)] The period of $\mathcal{P}_i$ is $2^i$.
\item[(4)] $h(\mathcal{P}_i)=0$.
\end{enumerate}

\begin{figure}
\centering
\includegraphics[scale=0.7]{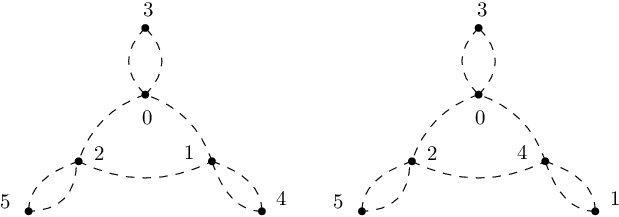}
\caption[fck]{Two examples of 6-periodic patterns considered in the proof of Lemma~\ref{1r}. \label{1rfig}}
\end{figure}

Property (4) follows from Proposition~\ref{SC}, since $\mathcal{P}_i$ has been obtained from $\mathcal{P}_2$ by performing 2-explosions and $h(\mathcal{P}_2)=0$.

\begin{figure}
\centering
\includegraphics[scale=0.6]{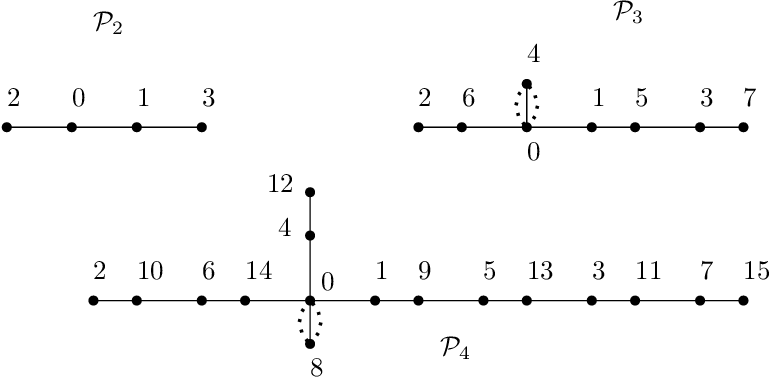}
\caption[fck]{Iterative procedure for constructing the pattern $\mathcal{P}_k$ in the proof of Lemma~\ref{2n}, here with $k=4$.\label{redeu}}
\end{figure}

Finally, set $\mathcal{Q}_k:=\mathcal{P}_k$ and, for $k+1\le i\le q$, let $\mathcal{Q}_i$ be a non-expanding 2-explosion of $\mathcal{Q}_{i-1}$. So,
$\mathcal{Q}_i$ remains a $k$-star simplicial pattern for all $i$. In this second part of the procedure, the exploding blocks do not create new
directions, in particular we do not modify the valence $k$ of the point 0.

Since $\mathcal{Q}_q$ has been obtained from $\mathcal{P}_k$ by performing 2-explosions and $h(\mathcal{P}_k)=0$, then $h(\mathcal{Q}_q)=0$ by
Proposition~\ref{SC}. Moreover, the period of $\mathcal{Q}_q$ is $2^q=n$. Let $(T,P,f)$ be the canonical model of $\mathcal{Q}_q$. Then,
$h(f)=0$. In addition, since $\mathcal{Q}_q$ is a $k$-star simplicial pattern, $T$ is a $k$-star. So, the map $\map{f}{T}$ fulfills the required
properties and we are done.

Finally we are left with the case $n=2^{k-1}$. Take the pattern $\mathcal{P}_{k-1}$ defined above, satisfying (1--4) with $i=k-1$.
In particular, $[0,2^{k-2}]$ is a basic interval, the valence of $0$ is $k-1$ and $2^{k-2}$ is an endpoint. Let $(S,Q,g)$ be the canonical model of
$\mathcal{P}_{k-1}$. Then, $h(g)=0$. Since $\mathcal{P}_{k-1}$ is a ($k$--1)-star simplicial pattern, $S$ is a ($k$--1)-star. Now consider the $k$-star
$T$ obtained from $S$ by replacing the interval $[0,2^{k-2}]$ with two intervals $[v,0]$ and $[v,2^{k-2}]$, in such a way that the valence of $v$ in $T$ is $k$,
and $0$ and $2^{k-2}$ are endpoints of $T$. So, $T$ is a $k$-star with $v$ as its central point. See Figure~\ref{redeu2}
for an example with $k=4$. Let $\map{f}{T}$ be the map such that $f(i)=g(i)$ for $0\le i<n$ and $f(v)=1$, and
$f$ is monotone on each $(Q\cup\{v\})$-basic interval. In particular, $f$ collapses the edge $[v,0]$ into the point 1. It is easy to see
that $h(f)=h(g)$ (see, for instance, the final step in the proof of Theorem~\ref{5.3} of \cite{ajm}).
Since $h(g)=0$, $\map{f}{T}$ is a zero entropy $k$-star map having a periodic orbit $Q$ of period $n$ with points in each branch of $T$.
\end{proof}

\begin{figure}
\centering
\includegraphics[scale=0.6]{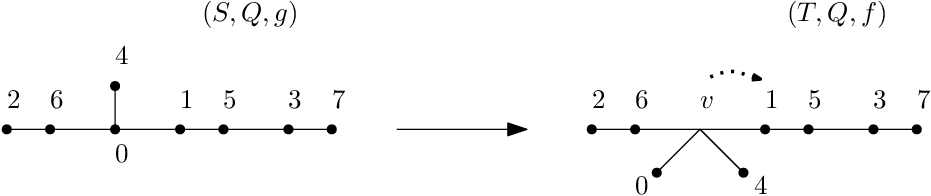}
\caption[fck]{Extending a 3-star map to a 4-star map with the same entropy as in the proof of Lemma~\ref{2n}.\label{redeu2}}
\end{figure}

We need a last ingredient to prove Theorem~\ref{mainC}. It is a well-known criterion of positive entropy for tree maps (see for instance
Lemma~6.1 of \cite{lm}).

\begin{lemma}\label{LMprevi}
Let $\map{g}{S}$ be a tree map. Let $I,J_1,J_2\subset S$ be closed intervals containing no points of $V(S)$ in their interiors such that $\Int(J_1)\cap
\Int(J_2)=\emptyset$. If there exist positive integers $\ell,\ell_1,\ell_2$ such that $f^\ell(I)\supset J_1\cup J_2$, $f^{\ell_1}(J_1)\supset I$ and
$f^{\ell_2}(J_2)\supset I$, then $h(g)>0$.
\end{lemma}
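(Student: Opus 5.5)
The plan is to produce a genuine horseshoe for some iterate of $g$ and then use $h(g)=h(g^N)/N$. (In the statement $f$ should read $g$.) Since $I$, $J_1$, $J_2$ contain no vertices of $S$ in their interiors, each is an arc, so the interval-map covering arguments carry over.

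\textbf{Pull-back step.} I will use the standard tree-map fact that if $A,B\subset S$ are arcs and $g^m(A)\supset B$, then there is a subarc $A'\subset A$ with $g^m(A')=B$. To prove it, pick $a,b\in A$ mapped by $g^m$ to the two endpoints of $B$, chosen so that $[a,b]$ is minimal. Then $g^m([a,b])$ is a subtree containing both endpoints of $B$, hence containing $B$. By minimality, no interior point of $[a,b]$ is mapped to an endpoint of $B$, and a connectedness argument in the tree then gives $g^m([a,b])=B$. I expect this to be the only delicate point, because the image of an arc is a subtree rather than an interval.

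Applying it to $g^\ell(I)\supset J_1\cup J_2$ gives subarcs $I_1,I_2\subset I$ with $g^\ell(I_i)=J_i$. Since $J_1$ and $J_2$ have disjoint interiors, $J_1\cap J_2$ is at most one point. A common interior point of $I_1$ and $I_2$ would force an open subarc to be mapped into this single point while its endpoints map to points of $J_1$ and $J_2$ on opposite sides. So, possibly after shrinking by the pull-back step again, I can take $\Int(I_1)\cap\Int(I_2)=\emptyset$. Composing with $g^{\ell_i}(J_i)\supset I$, each $I_i$ satisfies $g^{\ell+\ell_i}(I_i)\supset I$.

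\textbf{Equalizing loop lengths.} The two loops $I\to J_1\to I$ and $I\to J_2\to I$ have lengths $L_1=\ell+\ell_1$ and $L_2=\ell+\ell_2$. Set $N=L_1L_2$. Iterating the pull-back lemma $L_2$ times along the first loop gives a subarc $K_1\subset I_1$ with $g^N(K_1)\supset I$. Iterating it $L_1$ times along the second loop gives $K_2\subset I_2$ with $g^N(K_2)\supset I$. Because $K_i\subset I_i$, the arcs $K_1$ and $K_2$ have disjoint interiors, and both lie in $I$.

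\textbf{Conclusion.} The arcs $K_1,K_2\subset I$ form a $2$-horseshoe for $g^N$: they have disjoint interiors and $g^N(K_1)\cap g^N(K_2)\supset K_1\cup K_2$. The standard horseshoe entropy estimate (Misiurewicz; see \cite{biblia}) then gives $h(g^N)\ge\log 2$. To apply it, iterate the pull-back to get $2^s$ subarcs with pairwise disjoint interiors, indexed by itineraries and $(s,\varepsilon)$-separated. Hence $h(g)=h(g^N)/N\ge \log 2/N>0$.

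The main obstacle is the first step: making the tree pull-back lemma and the disjointness of the pulled-back arcs rigorous. Once that is done, the rest is the classical interval argument.
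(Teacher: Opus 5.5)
The paper gives no proof of this lemma; it quotes it from Lemma~6.1 of \cite{lm}. Your route is the standard argument behind that citation: pull $J_1,J_2$ back to subarcs $I_1,I_2\subset I$ with disjoint interiors, get $g^{N}(I_i)\supset I$ for a common $N$, and apply the horseshoe estimate to $g^N$. It works, and it has the merit of showing exactly where the hypothesis on vertices enters. However, you attach that hypothesis to the wrong step, and one of your claims is false as stated.

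Your pull-back lemma fails for general arcs $B$. Take a $3$-star $S$ with centre $y$ and endpoints $e_1,e_2,e_3$. Let $A=[e_1,e_2]$, and let $g\colon S\to S$ map $A$ along the path $e_1\to y\to e_3\to y\to e_2$. Then $g(A)=S\supset B:=[e_1,e_2]$. But $e_1$ and $e_2$ each have a single preimage in $A$, namely its two ends. So the only subarc whose image contains $B$ is $A$ itself, and $g(A)\ne B$.

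Your connectedness argument is valid exactly when $B=[b_0,b_1]$ has no vertex in its interior. In that case the open arc $(b_0,b_1)$ is open in $S$, hence is a full component of $S\setminus\{b_0,b_1\}$. The set $g^m((a,b))$ is connected, misses $b_0$ and $b_1$, and has both in its closure, so it lies in that component. Every target you pull back to is $J_1$, $J_2$ or a subarc of $I$. This is where the hypothesis is used, not to make $I,J_1,J_2$ arcs: every closed interval in a tree is an arc. State the lemma with this restriction.

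Also record its by-product $g^m((a,b))\subset(b_0,b_1)$, because it settles your disjointness step. A common point of $J_1$ and $J_2$ must be an endpoint of both. Otherwise it is a non-vertex interior point of one of them, hence has a neighbourhood inside it, which would give $\Int(J_1)\cap\Int(J_2)\ne\emptyset$. So the open arcs of $J_1$ and $J_2$ are disjoint, and hence so are those of $I_1$ and $I_2$. This replaces your unclear ``opposite sides'' sentence.

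Two smaller points. No shrinking is needed to equalize loop lengths: $g^{L_1}(I_1)\supset I\supset I_1$ already gives $g^{jL_1}(I_1)\supset I$ for every $j\ge 1$. For the $\varepsilon$-separation in the horseshoe estimate, remember that $K_1$ and $K_2$ may share an endpoint. Pass to $g^{2N}$ and use two non-adjacent arcs among the four second-level pull-backs, which are disjoint.
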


Now we are ready to prove Theorem~\ref{mainC}.

\begin{proof}[Proof of Theorem~\ref{mainC}]
If $n=k\cdot 2^q$ for some $q\ge0$, then $M_{n,k}=0$ by Lemma~\ref{1r}. On the other hand, if $n=2^q$ for some $q\ge k-1$ then we also have
$M_{n,k}=0$, in this case by Lemma~\ref{2n}. So, the ``if'' part of the theorem is proven.

Assume now that $M_{n,k}=0$. Then, there exists a zero entropy map $\map{f}{T}$, where $T$ is a $k$-star, having an $n$-periodic orbit $P$ with at least
one point in each branch of $T$. Let $y$ be the central point of $T$. By the Main Theorem of \cite{AMM}, we can assume that $f(y)\in P\cup\{y\}$ and that
$f$ is monotone on each $(P\cup\{y\})$-basic interval. In other words, we can take $(T,P\cup\{y\},f)$ to be a Markov model.

Take the pattern $\mathcal{P}:=[T,P,f]$. Since $h(f)=0$, then $h(\mathcal{P})=0$. By Theorem~\ref{mainB}, $\mathcal{P}$ is strongly
collapsible. Let $\{\mathcal{P}_i\}_{i=0}^r$ be the sequence of collapses of $\mathcal{P}$ and let $(p_i)_{i=0}^r$ be the corresponding
sequence of explosions. Since $T$ is a $k$-star, $\mathcal{P}$ is a $k$-star pattern. Then, by Proposition~\ref{generalST}, the sequence of explosions is
$(p_0,2,2,\stackrel{r)}{\ldots},2)$ for some $2\le p_0\le k$, and
\begin{equation}\label{peri}
n=p_0 2^r.
\end{equation}

Let $(k_i)_{i=0}^r$ be the sequence of valences of $\mathcal{P}$. Define $t$ as the minimum index in $\{0,1,\ldots,r\}$ such that
\begin{equation}\label{tes}
k_t=k_{t+1}=\cdots=k_r=k.
\end{equation}

If $t=0$, then (\ref{tes}) yields $k_0=k$. Since $k_0=p_0$, (\ref{peri}) reads now as $n=k\cdot 2^r$. Then, statement (a) is satisfied with $q:=r$ and the theorem is
proven in this case. So, from now on we assume that $t\ge1$.

If there exists some $i\in\{t,t+1,\ldots,r\}$ for which $\mathcal{P}_i$ is simplicial, then all patterns
\[ \mathcal{P}_0,\mathcal{P}_1,\ldots,\mathcal{P}_{i-1}\]
are also simplicial by Lemma~\ref{basic1}(c). In particular, $p_0=2$ and (\ref{peri}) reads as $n=2^{r+1}$. Moreover, applying Lemma~\ref{basic3}
to $\mathcal{P}_i$ yields $k_i\le i+1$. Summarizing, we have $k=k_i\le i+1\le r+1$ and $n=2^{r+1}$. Thus, statement (b) is satisfied with $q:=r+1$ and the theorem is
proven in this case. 

So, we are left with the possibility that $t\ge 1$ and 
\begin{equation}\label{totsS1}
\mathcal{P}_i\mbox{ is non-simplicial for all }t\le i\le r.
\end{equation}
Moreover, $k_t=k_{t+1}=\cdots=k_r$ and $k_{t-1}<k_t$.

Let $C$ be the branching component of $\mathcal{P}_r$ and denote the blocks of $\mathcal{P}_r$ by $K_j:=\{j,j+2^r\}$. If $k_r=k_{r-1}$ (in other words if $t<r$),
then $\chull{K_j}\cap\Int(\chull{C})=\emptyset$ for every block $K_j$. It follows that if $T'$ is the tree obtained from $T$ by collapsing each $\chull{K_j}$ to a
single point (labeled as $j$), then $T'$ is still a $k$-star with central point $y$. Define now a map $f'$ on $P':=\{0,1,\ldots,2^r-1\}$ as $f'(j)=j+1\bmod 2^r$.
Set also $f'(y)=y$ if $f(y)=y$, or $f'(y)=j$ if $f(y)\in K_j$. Finally, extend $f'$ from $P'\cup\{y\}$ to the whole tree $T'$ by taking an obvious piecewise extension,
in such a way that $(T',P'\cup\{y\},f')$ is a Markov model. It is easy to see that the entropies of $f$ and $f'$ are the same (the proof of this fact is similar to that
of Proposition~\ref{81}). So, $h(f')=0$. Moreover, the pattern $[T',P',f']$ is precisely $\mathcal{P}_{r-1}$.

By iteration ($r-t$ times) of the previous construction, we get a Markov model $(S,Q\cup\{y\},g)$ such that $[S,Q,g]=\mathcal{P}_t$, $S$ is a
$k$-star with central point $y$, and $h(g)=0$. Note that $Q$ is a $2^{t+1}$-periodic orbit having a maximal structure $Q_0\cup Q_1\cup\cdots\cup Q_{2^t-1}$
of blocks of cardinality 2. Let $D$ be the branching component of $\mathcal{P}_t$. Now, we distinguish two cases.

\begin{case}{1} There exists $j\in Q\cap D$ such that $g(y)=g(j)$. \end{case}
In this case, since $g$ is $(Q\cup\{y\})$-monotone, $g$ collapses the basic
interval $[y,j]$ into a single point, $j+1$. Consider the tree $S'$ obtained after replacing the interval $[y,j]$ by a single point $j$. Note that the
valence $k'$ of $j$ in the new tree $S'$ is either $k-1$ if $j$ was an endpoint of the tree $S$, or $k$ otherwise:
\begin{equation}\label{valj}
k'\in\{k,k-1\}\mbox{, where $k'$ is the valence of $j$ in $S'$}.
\end{equation}
Moreover, all discrete components of the pointed tree $(S',Q)$ have 2 points. Let $\map{g'}{S'}$ be the map such that $g'(i)=g(i)$ for $0\le i<2^t$ and $g'$ is monotone
on each $Q$-basic interval. As it was mentioned in the proof of Lemma~\ref{2n}, the fact that $g$ maps identically the interval $[y,j]$ into the point $j+1$ easily implies
that $h(g')=h(g)=0$. So, $\mathcal{Q}':=[S',Q,g']$ is a zero entropy simplicial pattern with period $|Q|=2^{t+1}=n/2^{r-t}$, which, using (\ref{peri}), is equal to $p_02^t$.
Hence, $p_0=2$ and (\ref{peri}) reads as $n=2^{r+1}$. Since the sequence of explosions of $\mathcal{Q}'$ is $(2,\stackrel{t+1)}{\ldots},2)$, Lemma~\ref{basic3} tells us
that $k'\le t+1$. So, $k'\le r+1$ because $t\le r$ by definition. From (\ref{valj}), statement (b) is satisfied with $q:=r+1$, and the theorem is proven in this case.

\begin{case}{2} Either $g(y)=y$ or $g(y)\in Q\setminus g(D)$. \end{case}
Let us see that this assumption forces positive entropy for $g$, a contradiction telling us that this case does not apply. We start with a simple observation:
since $\bigcup_j Q_j$ is a $2^t$-block structure, the image by $g$ of the convex hull of a block $\chull{i_1,i_2}$ is a connected set that contains $g(i_1)$ and $g(i_2)$,
so it contains $\chull{g(i_1),g(i_2)}$. It follows that
\begin{equation}\label{bloki}
\mbox{for any block $Q_j$ and any $m\ge 0$, $g^m(\chull{Q_j})\supset\chull{Q_{j+m\bmod 2^t}}$.}
\end{equation}

Now we claim that
\begin{equation}\label{power}
\mbox{for any basic interval $J$ and any block $Q_j$, $g^\ell(J)\supset\chull{Q_j}$ for some $\ell\ge 0$.}
\end{equation}

Let us prove this claim. When $J$ is the convex hull of a block, the claim follows trivially from (\ref{bloki}). When $J=[i_1,i_2]$ with
$\{i_1,i_2\}\subset Q$ but $i_1,i_2$ belong to different blocks, then $g^m(i_1)$ and $g^m(i_2)$ belong to different blocks for all $m$. Since
the pattern $[S,Q,g]$ is not trivial, there exists at least one block $K=\{i_3,i_4\}$ such that $i_3\in\En(S)\setminus D$. Take $m$ such that $g^m(i_1)=i_3$.
Then, $g^m(J)$ is a connected set containing $i_3$ and $g^m(i_2)\ne i_4$. The fact that $i_3$ is an endpoint implies $g^m(J)\supset[i_3,i_4]=\chull{K}$, and
the claim follows then from (\ref{bloki}). Finally, if $J=[y,i]$ with $i\in D$, then $g(J)\supset [g(y),g(i)]$, which is a proper interval (not necessarily
basic) because $g(y)\ne g(i)$ by the hypothesis of Case 2. Taking $m$ such that $g^m(g(y))=i_3$, the previous argument applies also here and the claim
follows.

By the choice of $t$ above, $\mathcal{P}_t$ is an expanding 2-explosion of $\mathcal{P}_{t-1}$. In addition, (\ref{totsS1}) says that $\mathcal{P}_t$ is
non-simplicial. Then, by Lemma~\ref{endpoint}, there is at least one block $Q_j$ such that $Q_j\subset D$ and at least one of the two points of $Q_j$ is an endpoint.
Shifting labels if necessary, we can assume without loss of generality that $Q_j=Q_0=\{0,2^t\}$ and that $0\in\En(S)$. We have
\begin{equation}\label{potser}
\chull{Q_0}=[0,y]\cup[y,2^t].
\end{equation}

Note that not all blocks can be contained in $D$, for otherwise $\mathcal{P}_t$ would be a trivial pattern. So, there exists $1\le i<2^t$ such that
$\chull{Q_i}=[i,2^t+i]$ is a basic interval outside $\chull{D}$, i. e. satisfying $[i,2^t+i]\subset S\setminus\Int(\chull{D})$. See Figure~\ref{cas12}.

\begin{figure}
\centering
\includegraphics[scale=0.6]{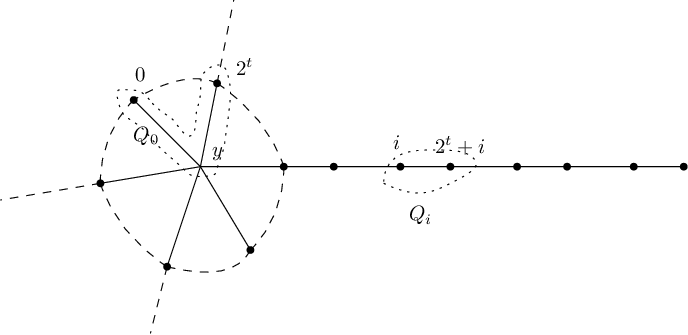}
\caption[fck]{Case 2 in the proof of Theorem~\ref{mainC}.\label{cas12}}
\end{figure}

Using (\ref{bloki}) with $j=i$ and $m=2^t-i$, together with (\ref{potser}), yields
\begin{equation}\label{universal1}
g^{2^t-i}([i,2^t+i])\supset [0,y]\cup[y,2^t].
\end{equation}
Now, (\ref{power}) tells us that there exist $\ell_1,\ell_2$ such that
\begin{equation}\label{universal2}
g^{\ell_1}([0,y])\supset[i,2^t+i]\mbox{ and }g^{\ell_2}([y,2^t])\supset[i,2^t+i].
\end{equation}

Now, collecting (\ref{universal1}) and (\ref{universal2}), the hypotheses of Lemma~\ref{LMprevi} are satisfied with
$I:=[i,2^t+i]$, $J_1:=[0,y]$, $J_2:=[y,2^t]$ and $\ell:=2^t-i$. Thus, $h(g)>0$.
\end{proof}

\end{document}